 \newtheorem{theorem}{Theorem}[section]
 \newtheorem{corollary}[theorem]{Corollary}
 \newtheorem{lemma}[theorem]{Lemma}
 \newtheorem{proposition}[theorem]{Proposition}
 \theoremstyle{definition}
 \theoremstyle{remark}
 \newtheorem{remark}[theorem]{Remark}
 \numberwithin{equation}{section}
\def \no#1#2#3 {{\bf #1} (#3), #2.}
\def \eds#1#2#3 {#1, #2, #3.}
\def\e{{\rm e}}
\def\d{{\rm d}}
\def\:{{\colon}}
\def\be#1{\begin{equation}\label{#1}}
\def\ee{\end{equation}}
\def\<{\langle}
\def\>{\rangle}
\def\coloneqq{:=}
\newcommand{\p}{\partial}
\newcommand{\supp}{\mathrm{supp}}
\newcommand{\ZZ}{\mathbb{Z}}
\newcommand{\NN}{\mathbb{N}}
\newcommand{\RR}{\mathbb{R}}
\newcommand{\B}{\mathcal{B}}
\newcommand{\C}{\mathcal{C}}
\newcommand{\eqnb}{\begin{equation}}
\newcommand{\eqne}{\end{equation}}
\newcommand\blfootnote[1]{%
  \begingroup
  \renewcommand\thefootnote{}\footnote{#1}%
  \addtocounter{footnote}{-1}%
  \endgroup
}
\begin{document}
\title{Partial regularity of Leray-Hopf weak solutions to the incompressible Navier--Stokes equations with hyperdissipation}
\author{Wojciech S. O\.za\'nski}
\maketitle
\blfootnote{\noindent Institute for Advanced Study, Princeton, NJ 08540, USA \\ wojciech@ias.edu}



\begin{abstract}
We show that if $u$ is a Leray-Hopf weak solution to the incompressible Navier--Stokes equations with hyperdissipation $\alpha \in (1,5/4)$ then there exists a set $S\subset \RR^3$ such that $u$ remains bounded outside of $S$ at each blow-up time, the Hausdorff dimension of $S$ is bounded above by $ 5-4\alpha $ and its box-counting dimension is bounded by $(-16\alpha^2 + 16\alpha +5)/3$. Our approach is inspired by the ideas of Katz \& Pavlovi\'c (\emph{Geom. Funct. Anal., 2002}).
\end{abstract}

\section{Introduction}

We are concerned with the incompressible Navier--Stokes equations with hyper-dissipation,
\eqnb\label{NSE_intro}\begin{split}
u_t + (-\Delta )^\alpha u + (u\cdot \nabla )u + \nabla p &=0\qquad \text{ in } \RR^3,\\
\mathrm{div}\, u &=0
\end{split}
\eqne
where $\alpha \in (1,5/4)$. The equations are equipped with an initial condition $u(0)=u_0$, where $u_0$ is given. We note that the symbol $(-\Delta )^\alpha$ is defined as the pseudodifferential operator with the symbol $(2\pi )^{2\alpha } |\xi |^{2\alpha } $ in the Fourier space, which makes \eqref{NSE_intro} a system of pseudodifferential equations.

It is well-known that the hyperdissipative Navier-Stokes equations \eqref{NSE_intro} are globally well-posed for $\alpha \geq 5/4$, which was proved by \cite{lions}. The question of well-posedness for $\alpha <5/4$, including the case $\alpha =1$ of the classical Navier-Stokes equations, remains open.

The first partial regularity result for the hyperdissipative \eqref{NSE_intro} model was given by \cite{katz_pavlovic}, who proved that the Hausdorff dimension of the singular set in space at the first blow-up time of a local-in-time strong solution is bounded by $5-4\alpha$, for $\alpha \in (1,5/4)$. Recently \cite{colombo_dl_m} showed that if $\alpha\in (1,5/4]$, $u$ is a suitable weak solution of \eqref{NSE_intro} on $\RR^3 \times (0,\infty )$ and 
\[S'\coloneqq \{   (x,t) \colon u \text { is unbounded in every neighbourhood of } (x,t) \} \] denotes the singular set in space-time
then $\mathcal{P}^{5-4\alpha } (S') =0$, where $\mathcal{P}^s$ denotes the $s$-dimensional parabolic Hausdorff measure. This is a stronger result than that of \cite{katz_pavlovic} since it is concerned with space-time singular set $S'$ (rather than the singular set in space at the first blow-up), it is a statement about the Hausdorff measure of the singular set (rather than merely the Hausdorff dimension) and it includes the case $\alpha =5/4$ (in which case the statement, $\mathcal{P}^0 (S') =0$, means that the singular set is in fact empty, and so \eqref{NSE_intro} is globally well-posed). The main ingredient of the notion of a ``suitable weak solution'' in the approach of \cite{colombo_dl_m} is a local energy inequality, which is a generalisation of the classical local energy inequality in the Navier--Stokes equations (i.e. when $\alpha =1$) to the case $\alpha \in (1,5/4)$. The fractional Laplacian $(-\Delta )^{\alpha }$ is incorporated in the local energy inequality using a version of the extension operator introduced by \cite{caffarelli_silvestre} (see also \cite{yang_ext} and Theorem 2.3 in \cite{colombo_dl_m}). \cite{colombo_dl_m} also show a bound on the box-counting dimension of the singular set 
\eqnb\label{colombo_db_bound}
d_B(S' \cap (\RR^3 \times [t, \infty ) )) \leq (-8\alpha^2 - 2 \alpha + 15 )/3 
\eqne
for every $t>0$. Note that this bound reduces to $0$ at $\alpha =5/4$ and converges to $5/3$ as $\alpha \to 1^+$, which is the bound that one can deduce from the classical result of \citep{CKN}, see \cite{rob_sad_2007} or Lemma 2.3 in \cite{ozanski_book} for a proof. We note that this bound (for the Navier--Stokes equations) has recently been improved by \cite{wang_yang} (to the bound $d_B(S)\leq 7/6$).

Here, we build on the work of \cite{katz_pavlovic}, as their ideas offer an entirely different viewpoint on the theory of partial regularity of the Navier--Stokes equations (or the Navier--Stokes equations with hyper- and hypo- dissipation), as compared to the early work of Scheffer (1976\emph{a}, 1976\emph{b}, 1977, 1978 \& 1980)\nocite{scheffer_hausdorff_measure}\nocite{scheffer_partial_reg}\nocite{scheffer_turbulence}\nocite{scheffer_dim_4}\nocite{scheffer_NSE_on_bdd_domain}, the celebrated result of \cite{CKN}, as well as alternative approaches of \cite{vasseur_2007}, \cite{lin}, \cite{ladyzhenskaya_seregin}, as well as numerous extensions of the theory, such as \cite{colombo_dl_m} and \cite{tang_yu}. Instead it is concerned with the dynamics (in time) of energy packets that are localised both in the frequency space and the real space $\RR^3$, and with studying how do these packets move in space as well as transfer the energy between the high and low frequencies. An important concept in this approach is the so-called \emph{barrier} (see \eqref{barrier_property}), which, in a sense, quarantines a fixed region in space in a way that prevents too much energy flux entering the region. This property is essential in showing regularity at points outside of the singular set.

In order to state our results, we will say that $u$ is a (global-in-time) \emph{Leray-Hopf weak solution} of \eqref{NSE_intro} if 
\begin{enumerate}
\item[(i)] it satisfies the equations in a weak sense, namely
\[
\int_0^t \int \left( -u \, \varphi_t + (-\Delta )^{\alpha/2 } u \cdot (-\Delta )^{\alpha/2 } \varphi + (u\cdot \nabla )u \cdot \varphi \right) = \int u_0 \cdot \varphi - \int u(t) \cdot \varphi (t)
\]
holds for all $t>0$ and all $\varphi \in C_0^\infty ([0,\infty )\times \RR^3 ; \RR^3 )$ with $\mathrm{div}\,\varphi (s)=0$ for all $s\geq 0$ (where we wrote $\int \equiv \int_{\RR^3}$ for brevity),
\item[(ii)] the strong energy inequality,
\eqnb\label{EI_prelims}
\frac{1}{2} \| u(t) \|^2 + \int_s^t \| (-\Delta)^{\alpha/2} u (\tau ) \|^2 \d \tau \leq \frac{1}{2} \| u (s) \|^2
\eqne
holds for almost every $s\geq 0$ (including $s=0$) and every $t>s$.
\end{enumerate}
We note that Leray--Hopf weak solutions admit \emph{intervals of regularity}; namely for every Leray-Hopf weak solution there exists a family of pairwise disjoint intervals $(a_i ,b_i)\subset (0,\infty )$ such that $u$ coincides with some strong solution of \eqref{NSE_intro} on each interval and
\eqnb\label{int_of_reg}
\mathcal{H}^{(5-4\alpha )/2\alpha }\left( \RR \setminus \bigcup_i (a_i,b_i ) \right) =0,
\eqne
see Theorem 2.6 and Lemma 4.1 in \cite{jiu_wang} for a proof. This is a generalisation of the corresponding statement in the case $\alpha =1$ (i.e. in the case of the Navier--Stokes equations), see Section 6.4.3 in \cite{ozanski_pooley} and Chapter 8 in \cite{NSE_book}.\\
 
Given $u_0 \in L^2 (\RR^3 )$ with $\mathrm{div}\, u_0 =0$ there exists at least one global-in-time Leray-Hopf weak solution (see Theorem 2.2 in \cite{colombo_dl_m}, for example). We denote by $S$ the singular set in space of $u$ at single blow-up times, namely 
\eqnb\label{def_singset}
S\coloneqq \bigcup_i S_i,
\eqne
where
\[
S_i \coloneqq \{ x\in \RR^3 \colon u \text{ is unbounded in } U\times ((a_i+b_i)/2, b_i  ) \text{ for any neighbourhood } U\text{ of }x\}
\]
denotes the singular-set 
In particular, if $x\not \in S$ then $\limsup_{t\to b_i^-} \| u(t) \|_{L^\infty (U)} \leq c_i$ for every $i$ and $U\ni x$. The first of our main results is the following. 
\begin{theorem}\label{thm_main}
Let $u$ be a Leray-Hopf weak solution of \eqref{NSE_intro} with $\alpha \in (1,5/4)$ and an initial condition $u_0\in H^1(\RR^3)$, and let $\varepsilon >0$. There exists $C>0$ and a family of collections $\B_j$ of cubes $Q\subset \RR^3$ of sidelength $2^{-j(1+\varepsilon )}$ such that 
\[ \# \B_j \leq C\,2^{j(5-4\alpha +\varepsilon )}
\]
for each $j\in \NN$, and
\eqnb\label{claim_of_thm1}
S \subset \limsup_{j\to \infty } \bigcup_{Q\in \B_j } Q.
\eqne
In particular, $d_H(S)\leq 5-4\alpha $.
\end{theorem}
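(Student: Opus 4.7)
The plan is to adapt the Katz--Pavlovi\'c framework to the Leray-Hopf setting by working one interval of regularity at a time. On each $(a_i, b_i)$ the solution $u$ is a smooth strong solution, so the singular set $S_i$ consists of points at which $u$ blows up as $t \to b_i^-$. I will produce active-cube covers of each $S_i$ by cubes of sidelength $2^{-j(1+\varepsilon)}$ and then assemble $\B_j$ as their union over $i$, using the global dissipation budget from the strong energy inequality \eqref{EI_prelims} to control the total count and \eqref{int_of_reg} to ensure that only finitely many intervals contribute essentially at any given scale $j$.

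On a fixed interval of regularity $(a_i,b_i)$ I perform a Littlewood--Paley decomposition $u = \sum_k u_k$ with $u_k$ supported in frequencies $|\xi|\sim 2^k$ and, at each $k$, partition $\RR^3$ into a grid $\mathcal Q_k$ of cubes of sidelength $2^{-k(1+\varepsilon )}$. For each $Q \in \mathcal Q_k$ I define a localised energy packet $e_k(Q,t) := \int \chi_Q |u_k(t)|^2$ via a smooth cutoff $\chi_Q$ comparable to $\mathbf 1_Q$, and I declare $Q$ \emph{active} at time $t$ if $e_k(Q,t) \ge \tau_k$ for a threshold $\tau_k$ tuned to the critical scaling of \eqref{NSE_intro}. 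A Sobolev/small-tail argument then shows that any $x$ which fails to lie in an active cube at all sufficiently fine scales is regular at $b_i^-$, so $S_i \subset \limsup_{j \to \infty } \bigcup_{Q \in \A^{(i)}_j} Q$, where $\A^{(i)}_j \subset \mathcal Q_j$ collects the cubes which are active at some time in $((a_i+b_i)/2, b_i)$.

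The heart of the argument is bounding $\sum_i\# \A^{(i)}_j$. Following Katz--Pavlovi\'c, I derive a pointwise differential inequality for $e_k(Q,\cdot )$ by multiplying the LP-projected equation against $\chi_Q u_k$, obtaining a hyperdissipative damping term of size $2^{2\alpha k} e_k(Q,t)$ and nonlinear transfer terms which, via the Bony paraproduct, split into interactions with neighbouring packets at the same, at coarser and at finer scales. The Katz--Pavlovi\'c \emph{barrier property} then asserts that if a thick shell of cubes around $Q$ at appropriate scales is inactive for a long enough subinterval, then $Q$ itself must stay inactive: the low-high transport that could energise $Q$ is controlled by the coarse-scale energy of the shell, and for $\alpha > 1$ the hyperdissipation $2^{2\alpha k}$ drains $Q$ faster than the residual high-high and high-low cascade can refill it. Conversely, each active cube must correspond to a \emph{barrier-breaking} event charging a definite amount of enstrophy to the global dissipation budget $\int_0^\infty \| (-\Delta )^{\alpha/2} u (\tau ) \|^2 \,\d \tau \le \tfrac 12 \| u_0 \|^2$; summing barrier-breaks over all intervals of regularity and balancing the threshold $\tau_j$ against the total dissipation yields $\sum_i \# \A^{(i)}_j \lesssim 2^{j(5-4\alpha +\varepsilon )}$. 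The $\varepsilon$-loss comes from using cubes of sidelength $2^{-j(1+\varepsilon )}$ rather than the natural Heisenberg scale $2^{-j}$, which supplies the uncertainty-principle margin needed for the paraproduct bookkeeping to close.

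The main obstacle, as in Katz--Pavlovi\'c, is the barrier estimate itself: pushing through the pointwise dynamics of $e_k(Q,\cdot )$ to show that a layer of sub-threshold cubes truly quarantines the interior region. This is the step that fails at $\alpha = 1$, so it must be executed in a way that delicately exploits the extra dissipation available when $\alpha > 1$, and it is also where the $\varepsilon$-margin is spent. A secondary difficulty is the passage from the single-interval picture to the full Leray-Hopf solution, since there may be countably many blow-up times $b_i$; here I would use \eqref{int_of_reg} to absorb the contributions of intervals $(a_i,b_i)$ whose length is small compared to the relevant $2^{-2\alpha j}$ time-scale, and the telescoping of the strong energy inequality across consecutive intervals to ensure the remaining contributions sum to at most a constant multiple of the initial energy.
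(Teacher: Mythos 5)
Your high-level picture (localised energy packets, a differential inequality with hyperdissipative damping and paraproduct-decomposed transfer terms, a barrier, and a dissipation budget for the count) matches the paper's, but the decisive step — the uniform bound $\#\B_j\leq C\,2^{j(5-4\alpha+\varepsilon)}$ with $C$ \emph{independent of the interval of regularity} — is left unproved by your organisation. The paper does \emph{not} build per-interval covers and union them over $i$; it defines good/bad cubes by a single global-in-time space-time integral: $Q$ is $j$-good iff $\int_0^\infty\int_Q\sum_{k\geq j}2^{2\alpha k}|P_k u|^2\leq 2^{-j(5-4\alpha+\varepsilon)}$. This makes the count immediate and uniform, because a Vitali cover of bad cubes is controlled directly by the total dissipation $\int_0^\infty\|(-\Delta)^{\alpha/2}u\|^2\leq\frac12\|u_0\|^2$ from \eqref{EI_prelims}. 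By contrast, your criterion ``$e_k(Q,t)\geq\tau_k$ at some $t\in((a_i+b_i)/2,b_i)$'' is pointwise in time and does not itself charge dissipation; the sentence ``each active cube must correspond to a barrier-breaking event charging a definite amount of enstrophy'' is precisely the claim you would have to prove, and that reconnection of a pointwise exceedance to the space-time enstrophy budget is the substance that is missing. The appeal to \eqref{int_of_reg} to absorb short intervals does not fill this: \eqref{int_of_reg} bounds a Hausdorff measure of non-regular times and gives no control on the number of intervals whose contribution at scale $j$ is non-negligible. (And once the bad-cube criterion is global, this per-interval bookkeeping is unnecessary anyway.)

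Two further ingredients you would need and have not supplied. First, the packet differential inequality must hold in the integral sense on all of $[0,\infty)$, not only on each $(a_i,b_i)$ where $u$ is smooth; otherwise the ``good ancestor implies critical decay'' step (the paper's Theorem~\ref{thm_j_good_gives_critical regularity}) cannot be iterated across singular times. The paper proves this by establishing continuity of $t\mapsto\|\phi_Q P_j u(t)\|$ from weak $L^2$-continuity of $u$ and integrability of the flux terms, so that the inequality passes through putative singular times via Lemma~\ref{lem_abs_cont_1}. Second, you only gesture at the barrier; the paper constructs $\B_j$ explicitly from the Vitali cover of bad cubes via the ``naughty cube'' bookkeeping so that every $x\notin\bigcup_{Q\in\B_j}Q$ admits a shell $\partial(rQ_j(x))$ disjoint from all bad $k$-cubes for $k\geq j$ — the barrier property \eqref{barrier_property} — and this geometric construction, together with the Geometric Lemma, is what makes ``a thick shell of inactive cubes quarantines the interior'' a theorem rather than a heuristic.
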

Here $d_H$ stands for the Hausdorff dimension, and we recall that $\limsup_{j\to \infty } G_j \coloneqq \cap_{k\geq 0} \cup_{j\geq k} G_j$ denotes the set of points belonging to infinitely many $G_j$'s. It is well-known (see Lemma~3.1 in \cite{katz_pavlovic}, for example) that \eqref{claim_of_thm1} implies that $d_H (S) \leq 5-4\alpha + \varepsilon$, from which the last claim of the theorem follows by sending $\varepsilon>0$.

We note that $C$ might depend on $\varepsilon$, but it does not depend on the interval of regularity $(a_i,b_i)$, which gives us a control of the structure of the singular sets $S_i$ that is uniform across blow-ups in time of a Leray-Hopf weak solution. This is an improvement of the result of \cite{katz_pavlovic}, who obtained such control for a given strong solution, and so for each interval of regularity $(a_i, b_i)$ of a Leray-Hopf weak solution their result implies existence of $C_i>0$ such that $S_i \subset \limsup_{j\to \infty } \bigcup_{Q\in \B^{(i)}_j } Q   $ for some collections $\B^{(i)}_j$ of cubes of sidelength $2^{-j(1+\varepsilon )}$ satisfying $\B^{(i)}_j \leq C_i\,2^{j(5-4\alpha +\varepsilon )}$ for all $j$. One could therefore expect that the constants $C_i$ become unbounded as $i$ varies (for example in a scenario of a limit point of the set of blow-up times $\{b_i\}$), and Theorem~\ref{thm_main} shows that it does not happen. 

We note however, that Theorem \ref{thm_main} does not estimate the dimension of the singular set at the blow-up time which is not an endpoint $b_i$ of an interval of regularity (but instead a limit of a sequence of such $b_i$'s). In other words, if $x\not \in S$, $U\ni x$ is a small open neighbourhood of $x$ and $\{ (a_i,b_i)\}_{i}$ is a collection of consecutive intervals of regularity of $u$, we show that $\sup_{U\times ((a_i+b_i)/2,b_i)} |u| = c_i<\infty $, but our result does not exclude the possibility that $c_i\to \infty $ as $i\to \infty$. It also does not imply boundedness of $|u(t)|$ at times $t$ near the left endpoint $a_i$ of any interval of regularity $(a_i,b_i)$. These issues are related to the fact that inside the barrier we still have to deal with infinitely many energy packets (i.e. infinitely many frequencies and cubes in $\RR^3$). Thus, supposing that the estimate on the the energy packets inside the barrier breaks down at some $t$ we are unable to localize the packet (i.e. the frequency and the cube) on which the growth occurs near $t$, unless $t$ is located inside an interval of regularity, see Step~1 of the proof of Theorem \ref{thm_reg_outside_E} for details.

The proof of Theorem \ref{thm_main} is inspired by the strategy of the proof of \cite{katz_pavlovic}, which we extend to the case of Leray-Hopf weak solutions and we use a more robust main estimate. The main estimate is an estimate on the time derivative of the $L^2$ norm of Littlewood--Paley projection $P_ju$ combined with a cut-off in space (the \emph{energy packet}), see \eqref{basic_est}. We show that such norm is continuous in time (regardless of putative singularities of a Leray-Hopf weak solution), which makes the main estimate valid for all $t>0$. Inspired by \cite{katz_pavlovic}, we then define \emph{bad cubes} and \emph{good cubes} (see \eqref{j-good}) and show that we have a certain more-than-critical decay on a cube that is good and has some good ancestors. We then construct $\B_j$ as a certain cover of bad cubes and prove \eqref{claim_of_thm1}. 

Our second main result is concerned with the box-counting dimension. We let 
\eqnb\label{def_singset_k}
S^{(k)} \coloneqq \bigcup_{i\leq k } S_i 
\eqne
\begin{theorem}\label{thm_main2}
Let $u$ be as in Theorem \ref{thm_main}. Then $d_B(S^{(k)})\leq (-16\alpha^2 +16\alpha +5 )/3$ for every $k\in \NN$.
\end{theorem}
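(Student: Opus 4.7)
The plan is to derive Theorem~\ref{thm_main2} from Theorem~\ref{thm_main} by strengthening the $\limsup$ covering into a cover over a \emph{bounded multiplicative range} of dyadic scales. Write $d \coloneqq 5 - 4\alpha$ and set $\lambda \coloneqq (4\alpha+1)/3 = 2 - d/3$; the target exponent $\lambda d = (-16\alpha^2+16\alpha+5)/3$ is precisely the Robinson--Sadowski-type interpolation $d(2 - d/n)$ between the Hausdorff exponent $d$ and the ambient spatial dimension $n=3$, and at $\alpha = 1$ it specialises to the classical $5/3$ bound for the Navier--Stokes singular set.

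The central step is to prove that for each $k \in \NN$ there exists $N_k$ such that for every $N \geq N_k$,
\[
S^{(k)} \subset \bigcup_{j=N}^{\lfloor \lambda N \rfloor} \bigcup_{Q \in \mathcal{B}_j} Q .
\]
Granted this, I cover $S^{(k)}$ at the single scale $r = 2^{-N(1+\varepsilon)}$ and note that each cube of $\mathcal{B}_j$ with $j \geq N$ has sidelength $\leq r$ and hence fits into $O(1)$ $r$-cubes; this yields a cardinality bound $\lesssim \sum_{j=N}^{\lfloor \lambda N \rfloor} \#\mathcal{B}_j \lesssim 2^{\lambda N(d + \varepsilon)}$. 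Dividing by $\log(1/r) = N(1+\varepsilon)\log 2$ and sending $\varepsilon \to 0$ gives $d_B(S^{(k)}) \leq \lambda d$.

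To establish the single-scale covering statement, I would refine the cascade argument underlying Theorem~\ref{thm_main}. If $x \in S^{(k)}$ fails to lie in any bad cube at scales $j \in [N, \lfloor \lambda N \rfloor]$, then the local Littlewood--Paley energy packets of $u$ around $x$ on \emph{every} one of these scales lie below the threshold $\tau_j$ used in the construction of $\mathcal{B}_j$. Summing these sub-threshold bounds against the strong energy inequality $\int_0^T 2^{2\alpha j}\|P_j u\|^2\,dt \leq \|u_0\|^2$, and using a Bernstein-type estimate to pass from localised $L^2$ control to pointwise bounds, would produce a uniform upper bound on $|u(x,t)|$ near the blow-up time associated to $x$, contradicting $x \in S^{(k)}$ as soon as $\lambda \geq (4\alpha+1)/3$. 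The finiteness of $k$ is used only to make $N_k$ finite, since only the blow-up times $b_1,\dots,b_k$ are at stake.

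The main obstacle is identifying and verifying the sharp value $\lambda = (4\alpha+1)/3$. This number arises from balancing the dissipative decay $2^{-2\alpha j}$ against the spatial Bernstein factor $2^{3j/2}$, and making the sub-threshold argument quantitative will require careful bookkeeping of the thresholds $\tau_j$, the Littlewood--Paley overlap, and the cut-off losses that appear in the proof of Theorem~\ref{thm_main}. Once this is in place, the passage to a single-scale cover and the box-counting bound is essentially a counting computation.
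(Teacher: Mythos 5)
Your overall strategy — cover $S^{(k)}$ by the union of the bad-cube covers $\mathcal{B}_j$ over a bounded multiplicative range of scales, then count at a single scale — is the right one, and your target ratio $\lambda = (4\alpha+1)/3$ is consistent with the paper. In fact your choice to count at the \emph{coarsest} scale $2^{-N}$ (so that $N(S^{(k)},r) \lesssim \sum_{j=N}^{\lambda N}\#\mathcal{B}_j \lesssim 2^{\lambda N d}$) is a slightly different formulation from the paper's, which covers the same union by $j$-cubes (the \emph{finest} scale $2^{-j}$) over the range $k \in [\lfloor \theta j - 10\rfloor, j]$ and counts $\sum_{k}2^{3(j-k)(1-\varepsilon)}\#\mathcal{B}_k$; both counts give the same exponent, and indeed $\lambda = (4\alpha+1)/3 > 1/\theta = 3/(4\alpha-2)$ on $(1,5/4)$, so your covering claim follows from Proposition~\ref{prop_db} once that proposition is established.

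The genuine gap is that you have not proved the covering statement, and your heuristic for it would not suffice as written. The ``straightforward'' cascade argument — applying Theorem~\ref{thm_j_good_gives_critical regularity} to control the low modes $u_{Q_k}$ for $k\in[\theta j, j-5]$, as in Step~3 of the proof of Theorem~\ref{thm_reg_outside_E} — requires goodness of ancestors two levels deep and only establishes the covering over the range $[\theta^2 j - 10, j]$ (see \eqref{temp_lk} and \eqref{used_for_naive_db}). For $\alpha$ near $1$ one has $1/\theta^2 = 9/4 > 5/3 = \lambda$, so the range $[\theta^2 j, j]$ cannot be fit inside $[N, \lambda N]$ at any scale; in other words, the naive covering statement is strictly weaker than the one you need, and would only give the cruder bound \eqref{dB_bound_not_sharp}. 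The missing ingredient — which the paper supplies in Proposition~\ref{prop_db} — is to replace the pointwise-in-time critical decay of $u_{Q_k}$ for the low modes touching the barrier by a direct use of the good-cube definition \eqref{j-good}, which is an $L^2_{t,x}$ (not pointwise) bound, combined with Cauchy--Schwarz in the time integral. This produces a $(t_0-t_1)^{1/2}$ factor (see \eqref{db_step4_replacement}) which is then absorbed on the left-hand side via Young's inequality (see \eqref{db_new_plugging_in}--\eqref{db_absorbing}). This is the device that saves one power of $\theta$ in the scale range and delivers the sharp exponent $\lambda d = (-16\alpha^2+16\alpha+5)/3$; without it the ``careful bookkeeping'' you propose cannot reach $\lambda$. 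You also reverse-engineer $\lambda$ from the desired answer rather than deriving it, which leaves unverified exactly the step that requires new input.
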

We prove the theorem by sharpening the argument outlined below Theorem~\ref{thm_main}. We recall that the box-counting dimension $d_B$ is concerned with covering the given set by a collection of balls of radius $r$,
\eqnb\label{def_db}
d_B(K) \coloneqq \limsup_{r\to 0 } \frac{\log N(K,r)}{-\log r},
\eqne
where $N (K,r)$ denotes the minimal number of balls (or boxes) of radius $r$ required to cover $K$. In this context, one can actually use the families $\B_j$ from  \eqref{claim_of_thm1} to deduce that  $d_B(S^{(k)} )\leq (-64\alpha^3 +96\alpha^2 - 48 \alpha +35 )/9$ for every $k$, which we discuss in detail in Section~\ref{sec_db}. This is however a worse estimate than claimed in Theorem~\ref{thm_main2}. 

In fact, in Section~\ref{sec_db} we improve this estimate by constructing refined families $\C_j$ that, in a sense, give a more robust control of the low modes, which reduces the number of cubes required to cover the singular set and hence improve the bound on $d_B(S^{(k)})$. See the informal discussion following Proposition~\ref{prop_db} for more insight about this improvement.

We note that we can only estimate $d_B(S^{(k)})$ (rather than $d_B(S)$) because of the localisation issue described above. To be more precise, for each sufficiently small $\delta >0$ we can construct a family of cubes of sidelength $\delta >0$ that covers the singular set when $t$ approaches a singular time, and that has cardinality less than or equal $\delta^{(-16\alpha^2 +16\alpha +5 )/3 +\varepsilon}$ for any given~$\varepsilon >0$. This family can be constructed independently of the interval of regularity, but given $x$ outside of this family we can show that the solution is bounded in a neighbourhood of $x$ if the choice of (sufficiently small) $\delta $ is dependent on the interval of regularity. This gives the limitation to only finite number of intervals of regularity in the definition of $S^{(k)}$.


We note that the result of \cite{colombo_dl_m} is stronger than our result in the sense that it is concerned with the space-time singular set $S'$ (rather than singular set $S$ in space), it is concerned with the parabolic Hausdorff measure of $S'$ (rather than merely the bound on $d_H(S')$) and its estimate of $d_B(S')$ is sharper than our estimate on $d_B(S_k)$.

However, our result is stronger than \cite{colombo_dl_m} in the sense that it applies to any Leray--Hopf weak solutions (rather than merely suitable weak solutions). In other words we do not use the local energy inequality, which is the main ingredient of \cite{colombo_dl_m}. Also, our approach does not include any estimates of the pressure function. In fact we only consider the Leray projection of the first equation in \eqref{NSE_intro}, which eliminates the pressure. Furthermore, our approach can be thought of as an extension of the global regularity of \eqref{NSE_intro} for $\alpha >5/4$. In fact, the following corollary can be proved almost immediately using our main estimate, see Section \ref{sec_reg_alpha_large}.
\begin{corollary}\label{cor_alpha_big_and_higher_dim}
If $\alpha >5/4$ then \eqref{NSE_intro} is globally well-posed.
\end{corollary}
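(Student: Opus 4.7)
The plan is to rerun the main a priori estimate \eqref{basic_est} but without any spatial localisation. When the cut-off is replaced by the constant function $1$ there are no commutator errors to handle, and the analysis behind \eqref{basic_est} collapses into a Bernstein-type differential inequality
\begin{equation*}
\frac{d}{dt}\|P_j u(t)\|_{L^2}^2 + c\,2^{2\alpha j}\|P_j u(t)\|_{L^2}^2 \leq \mathcal{N}_j(u(t))
\end{equation*}
valid for every $j\in\NN$ and every $t>0$, where $\mathcal{N}_j(u)$ encodes the triadic frequency interactions produced by the nonlinearity $(u\cdot\nabla)u$.

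The key observation is that for $\alpha>5/4$ the dissipation exponent $2\alpha$ is strictly above the scaling-critical threshold $5/2$ in dimension three, so one may choose an auxiliary exponent $s\in (5/2,\,2\alpha)$, multiply the inequality above by $2^{2sj}$, and sum over $j$. Standard Bernstein/H\"older bookkeeping then bounds $\sum_j 2^{2sj}\mathcal{N}_j(u)$ by $C\|(-\Delta)^{\alpha/2}u\|_{L^2}^2 \|u\|_{H^s}^2$, producing the differential inequality
\begin{equation*}
\frac{d}{dt}\|u(t)\|_{H^s}^2 \leq C\,\|(-\Delta)^{\alpha/2}u(t)\|_{L^2}^2\,\|u(t)\|_{H^s}^2.
\end{equation*}
Gr\"onwall's inequality together with the a priori bound $\int_0^\infty \|(-\Delta)^{\alpha/2}u(\tau)\|_{L^2}^2\,d\tau\leq \tfrac12\|u_0\|_{L^2}^2$ inherited from \eqref{EI_prelims} then gives $u\in L^\infty_{\mathrm{loc}}([0,\infty);H^s(\RR^3))$ with a bound depending only on $\|u_0\|_{H^1}$. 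Since $s>3/2$, Sobolev embedding yields $u\in L^\infty_{\mathrm{loc}}([0,\infty);L^\infty(\RR^3))$, which, combined with standard propagation of regularity for strong solutions of \eqref{NSE_intro} and weak--strong uniqueness, implies global well-posedness.

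The only step that genuinely requires verification is the claim that $\sum_j 2^{2sj}\mathcal{N}_j(u)$ is controlled by $\|(-\Delta)^{\alpha/2}u\|_{L^2}^2\|u\|_{H^s}^2$ for some $s\in(5/2,\,2\alpha)$ whenever $\alpha>5/4$. This is the same frequency-cascade bookkeeping already carried out in the proof of \eqref{basic_est} in the $\alpha<5/4$ setting, but strictly simpler here since no boundary commutator terms are produced by the (absent) cut-off. The threshold $\alpha>5/4$ emerges naturally as the scaling criticality of \eqref{NSE_intro} in three dimensions, recovering the original result of \cite{lions}.
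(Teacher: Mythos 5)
Your proposed route is genuinely different from the paper's, and it also contains a technical gap.

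The paper proves Corollary~\ref{cor_alpha_big_and_higher_dim} as an essentially immediate consequence of the machinery already built. The key observation is that when $\alpha>5/4$ the exponent $5-4\alpha$ is negative, so taking $\varepsilon\in(0,4\alpha-5)$ one has $5-4\alpha+\varepsilon<0$ and the trivial energy bound $u_Q(t)\le\|u(t)\|\le c$ already gives
\[
u_Q(t)\le c\le c\,2^{-j(5-4\alpha+\varepsilon)/2}
\]
for every $j$-cube $Q$ with $j\ge 0$: every cube automatically enjoys more-than-critical decay. Consequently any small closed surface $\mathcal{S}$ can serve as a barrier, Theorem~\ref{thm_reg_outside_E} (with $\partial(rQ_{j_1}(x))$ replaced by $\mathcal{S}$) gives $u_Q(t)<2^{-j\rho(Q)/2}$ inside $\mathcal{S}$ for all $t\ge 0$, and Corollary~\ref{cor_regularity_outside_E}, with $j_2$ chosen uniformly in $x$, then yields a global bound on $\|u(t)\|_\infty$. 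This brief argument is meant to show that the good-cube/barrier machinery specialises to classical global regularity precisely when the asserted Hausdorff dimension bound $5-4\alpha$ becomes negative; your proposal, which discards the localisation entirely, does not make this point.

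Beyond the mismatch in approach, the claimed Gr\"onwall inequality
\[
\frac{d}{dt}\|u(t)\|_{H^s}^2\le C\,\|(-\Delta)^{\alpha/2}u(t)\|^2\,\|u(t)\|_{H^s}^2
\]
cannot follow from ``standard Bernstein/H\"older bookkeeping.'' The nonlinearity $(u\cdot\nabla)u$ paired against $(-\Delta)^{s}u$ produces a term that is \emph{cubic} in $u$, while the right-hand side you propose is \emph{quartic}; the homogeneities do not match and no pointwise-in-time estimate of this form can hold. The missing ingredient in the classical Lions-type argument is absorption into the dissipation via Young's inequality, which is exactly where the threshold $5/4$ enters. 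For instance, at the level of $\dot H^1$ one has
\[
\frac{d}{dt}\|\nabla u\|^2 + c\,\|(-\Delta)^{(1+\alpha)/2}u\|^2 \le C\|\nabla u\|_{L^3}^3 \le C\|u\|^{3(1-\theta)}\|(-\Delta)^{(1+\alpha)/2}u\|^{3\theta},\qquad \theta=\frac{3}{2(1+\alpha)},
\]
and the last factor can be absorbed into the dissipation by Young's inequality precisely when $3\theta<2$, i.e.\ $\alpha>5/4$; the bound on $\|\nabla u(t)\|$ then follows from conservation of $\|u\|$, and global regularity follows by bootstrapping. If you wish to pursue the frequency-localised version, you would need to retain the dissipation term in the summed inequality and carry out this absorption step; a clean Gr\"onwall inequality of the form you wrote will not appear.
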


We also point out that our estimate on the box-counting dimension, $d_B(S_k)\leq (-16\alpha^2 +16\alpha +5 )/3$, converges to $5/3$ as $\alpha \to 1^+$, same as \eqref{colombo_db_bound}.

Finally, we also correct a number of imprecisions appearing in \cite{katz_pavlovic}, see for example Remark \ref{rem2} and Step~1 of the proof of Theorem \ref{thm_reg_outside_E}.

The structure of the article is as follows. In Section \ref{sec_prelims} we introduce some preliminary concepts including the Littlewood-Paley projections, paraproduct decomposition, Bernstein inequalities as well as a number of tools that allow us to manipulate quantities involving cut-offs in both the real space and the Fourier space, which includes estimates of the errors when one moves a Littlewood--Paley projection across spatial cut-offs and vice versa. We prove the first of our main results, Theorem \ref{thm_main}, in Section \ref{sec_main_result}. We prove Corollary \ref{cor_alpha_big_and_higher_dim} in Section \ref{sec_reg_alpha_large} and we prove the second of our main results, Theorem \ref{thm_main2}, in Section \ref{sec_db}.

\section{Preliminaries}\label{sec_prelims}
Unless specified otherwise, all function spaces are considered on the whole space $\RR^3$. In particular $L^2 \coloneqq L^2(\RR^3 )$. We do not use the summation convention. We will write $\p_i\coloneqq \p_{x_i}$, $B(R)\coloneqq \{ x\in \RR^3 \colon |x|\leq R \}$,  $\int\coloneqq \int_{\RR^3}$, and $\| \cdot \|_p \coloneqq \| \cdot \|_{L^p (\RR^3 )}$. We reserve the notation ``$\| \cdot \|$'' for the $L^2$ norm, that is $\| \cdot \| \coloneqq \| \cdot \|_2$.

We denote any positive constant by $c$ (whose value may change at each appearance). We point out that $c$ might depend on $u_0$ and $\alpha $, which we consider fixed throughout the article. As for the constants dependent on some parameters, we sometimes emphasize the parameters by using subscripts. For example, $c_{k,q}$ is any constant dependent on $k$ and $q$.

We denote by ``$e(j)$'' (a \emph{$j$-negligible error}) any quantity that can be bounded (in absolute value) by $c_K 2^{-Kj}$ for any given $K>0$. 

We say that a differential inequality $f'\leq g$ on a time interval $I$ is satisfied in the \emph{integral sense} if 
\eqnb\label{integral_sense}
f(t) \leq f(s) + \int_s^t g(\tau ) \d \tau \quad \text{ for every } t,s\in I \text{ with } t>s.
\eqne

We recall that Leray--Hopf weak solutions are weakly continuous with values in $L^2$. Indeed, it follows from part (i) of the definition that
\[
\int u(t) \varphi \quad \text{ is continuous for every }\varphi \in C_0^\infty (\RR^3 )\text{ with } \mathrm{div}\, \varphi =0.
\]
This is also true is $\mathrm{div}\, \varphi \ne 0$, as in this case one can apply Helmholtz decomposition to write $\varphi = \phi + \nabla \psi$, where $\mathrm{div}\,\phi =0 $ (then $\int u(t) \phi $ is continuous and $\int u(t) \nabla \psi =0$ since $u(t)$ is divergence-free). Thus, since part (ii) gives that $\{ u(t) \}_{t\geq 0}$ is bounded in $L^2$, weak continuity of $u(t)$ follows.
\subsection{Littlewood-Paley projections}
Given $f\in L^1 (\RR^3)$ we denote by $\widehat{f}$ its Fourier transform, i.e.
\[
\widehat{f} (\xi ) \coloneqq \int f(x) \e^{-2\pi i x\cdot \xi} \d x,\quad \xi \in \RR^3,
\]
and by $\check{f}$ its inverse Fourier transform, i.e. $\check{f} (x) \coloneqq \widehat{f}(-x)$. Let $h\in C^\infty (\RR ; [0,1])$ be any function such that $h(x)=1 $ for $x<1$ and $h(x)=0$ for $x>2$. We set $p(x) \coloneqq h(|x|)-h(2|x|)$, where $x\in \RR^3$, and we let 
\eqnb\label{def_of_pj}
p_j(\xi ) \coloneqq p(2^{-j}\xi)\quad \text{ for }j\in \ZZ,
\eqne
and we let $P_j$ (the $j$-th Littlewood-Paley projection) be the corresponding multiplier operator, that is
\[
\widehat{P_j f}(\xi ) \coloneqq p_j (\xi ) \widehat{f}(\xi ).
\]
By construction, $\supp\, p_j \subset B(2^{j+1})\setminus B(2^{j-1})$. We note that $\sum_{j\in \ZZ}p_j =1$, and so formally $\sum_{j\in \ZZ} P_j =\mathrm{id}$. We also denote
\eqnb\label{pj_convention}
 \widetilde{P}_j \equiv P_{j\pm 2} \coloneqq \sum_{k=j-2}^{j+2} P_k,\quad P_{j- 4,j+2} \coloneqq \sum_{k=j-4}^{j+2} P_k,\quad P_{\leq j} \coloneqq \sum_{k=-\infty }^j P_k ,\quad P_{\geq j} \coloneqq \sum_{k=j }^\infty P_k ,
\eqne
and analogously for $\widetilde{p}_j$, $p_{j-4,j+2}$, $p_{\leq j}$, $p_{\geq j}$. By a direct calculation one obtains that
\eqnb\label{pj_vs_p_inverse_FT}
\check{p_j} (y) = 2^{3j} \check{p}(2^j y)
\eqne
for all $j\in \ZZ$, $y\in \RR^3$. In particular $\| \check{p_j} \|_1 =c$ and so, since $P_j f = \check{p_j} \ast f$ (where ``$\ast $'' denotes the convolution), Young's inequality for convolutions gives
\eqnb\label{lp_pj_is_bdd_on_Lp}
\| P_j u \|_q \leq c \| u \|_q
\eqne
for any $q\in [1,\infty ]$. Moreover, given $K>0$ there exists $c_K >0$ such that
\eqnb\label{pj_bound}
\left| \check{p_j} (y) \right| \leq c_K (2^j |y|)^{-2K} 2^{3j}
\eqne
and
\eqnb\label{deriv_pj_bound}
\left| \p_i  \check{p_j} (y) \right|\leq c_K (2^j |y|)^{-2K} 2^{4j} 
\eqne
for all $j\in \ZZ$, $y\ne 0$ and $i=1,2,3$. Indeed, the case $j=0$ follows by noting that 
\[
\e^{2\pi i y\cdot \xi } = (-4\pi^2 |y|^2)^{-K} \Delta_{\xi}^K \e^{2\pi i y\cdot \xi }
\]
and calculating
\[\begin{split}
\left| \check{p} (y) \right| &= \left| \int p (\xi ) \e^{2\pi i y\cdot \xi  } \d \xi \right| = (4\pi^2 |y|^2)^{-K}  \left| \int \Delta^K p (\xi) \e^{2\pi i y\cdot \xi  } \d \xi \right| \\
&\leq c_K |y|^{-2K} \int_{B(2)} |\Delta^K p | = c_K |y|^{-2K},
\end{split}
\]
(and similarly $|\p_i \check{p} (y) |\leq c_K |y|^{-2K}$) where we have integrated by parts $2K$ times, and the case $j\ne 0$ follows from \eqref{pj_vs_p_inverse_FT}. Using \eqref{pj_bound} and \eqref{deriv_pj_bound} we also get
\eqnb\label{Lq_norm_of_pj_outside_delta_ball}
\| \check{p_j} \|_{L^q ( B(d )^c)} \leq C_{K,q} (d 2^j)^{-2K +3/q} \,2^{3j(q-1)/q}
\eqne
and
\eqnb\label{Lq_norm_of_deriv_pj_outside_delta_ball}
\| \p_i \check{p_j} \|_{L^q ( B( d )^c)} \leq C_{K,q} (d 2^j)^{-2K +3/q} \,2^{j(1+n(q-1)/q)},
\eqne
respectively, for any $K>0$, $d>0$, $i=1,2,3$, $j\in \ZZ$ and $q\geq 1$. Indeed
\[
\int_{\RR^3\setminus B(d)} |\check{p_j} (y)|^q \d y\leq C_{K,q} 2^{-jq(2K-3)}\int_{|y|\geq d } |y|^{-2Kq}  \d y = C_{K,q} 2^{-jq(2K-3)} d^{-2Kq+3}
\]
from which \eqref{Lq_norm_of_pj_outside_delta_ball} follows (and \eqref{Lq_norm_of_deriv_pj_outside_delta_ball} follows analogously).
We note that the same is true when $p$ is replaced by any compactly supported multiplier.
\begin{corollary}
Let $\lambda \in C_0^\infty (\RR^3)$ and, given $j\in \ZZ$, set $\lambda_j (\xi ) \coloneqq \lambda ( 2^{-j} \xi )$. Then given $d >0$\[
\| \check{\lambda_j} \|_{L^2 (\RR^3\setminus B(d ))} \leq c_K 2^{-j(2K-3)} d^{-2K+3/2}.\]
\end{corollary}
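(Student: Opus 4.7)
The plan is to repeat verbatim the argument used to establish \eqref{Lq_norm_of_pj_outside_delta_ball} in the special case $q=2$, with $p$ replaced by the compactly supported smooth multiplier $\lambda$. The only properties of $p$ exploited in that proof were smoothness and compact support of the symbol, both of which are satisfied by $\lambda$ by assumption. Thus the corollary is essentially a formal restatement of the remark made immediately before it.

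Concretely, I would first establish the pointwise decay $|\check{\lambda}(y)| \leq c_K|y|^{-2K}$ for every $y\neq 0$ and every $K>0$. Writing $\e^{2\pi \ri y\cdot \xi} = (-4\pi^2 |y|^2)^{-K}\Delta_\xi^K \e^{2\pi \ri y\cdot \xi}$ and substituting into the Fourier integral defining $\check{\lambda}(y)$, one integrates by parts $2K$ times in $\xi$; no boundary terms appear because $\lambda$ is compactly supported, and the remaining integral $\int |\Delta^K \lambda|$ is finite and depends only on $K$ and $\lambda$. The dyadic scaling $\check{\lambda_j}(y) = 2^{3j}\check{\lambda}(2^j y)$, which follows from a change of variables exactly as in \eqref{pj_vs_p_inverse_FT}, then yields $|\check{\lambda_j}(y)|\leq c_K (2^j|y|)^{-2K}\, 2^{3j} = c_K\, 2^{-j(2K-3)}|y|^{-2K}$.

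Squaring this bound and integrating over $\{|y|\geq d\}$, with $K$ chosen large enough that $\int_{|y|\geq d}|y|^{-4K}\d y$ converges to a quantity of order $d^{-4K+3}$ (this restriction is harmless since the constant $c_K$ is allowed to depend on $K$, and only large $K$ is relevant in applications), produces an estimate of order $c_K\, 2^{-2j(2K-3)}\,d^{-4K+3}$; taking the square root recovers the claimed inequality. No step presents a real obstacle: the corollary simply records in cleaner form the observation, already made in the text, that the estimates \eqref{Lq_norm_of_pj_outside_delta_ball}--\eqref{Lq_norm_of_deriv_pj_outside_delta_ball} hold for any smooth compactly supported multiplier.
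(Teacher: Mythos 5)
Your proof is correct and takes exactly the route the paper intends: the paper states the corollary without proof, relying on the preceding remark that the argument for \eqref{pj_bound} and \eqref{Lq_norm_of_pj_outside_delta_ball} carries over verbatim to any compactly supported multiplier, and you have simply written that argument out for $q=2$. The arithmetic checks out, since $(d2^j)^{-2K+3/2}\,2^{3j/2}=2^{-j(2K-3)}d^{-2K+3/2}$, which is the form in which the exponent appears in the corollary.
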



We will denote by $T$ the Leray projection, that is 
\eqnb\label{def_of_T}
\widehat{Tf} (\xi ) \coloneqq \left(I - \frac{\xi \otimes \xi }{|\xi |^2 }  \right) \widehat{f}
\eqne
where $f\colon \RR^3 \to \RR^3$, and $I$ denotes the $3\times 3$ identity matrix.

\subsection{Bernstein inequalities}
Here we point out classical Bernstein inequalities on $\RR^3$:
\eqnb\label{bernstein_ineq_single}
\| P_j f\|_q \leq c\, 2^{3j(1/p-1/q)} \| P_j f\|_p
\eqne
and
\eqnb\label{bernstein_ineq_leq}
\left\| P_{\leq j}  f \right\|_q \leq c\, 2^{3j(1/p-1/q)} \left\|  P_{\leq j} f \right\|_p 
\eqne
for any $1\leq p\leq q \leq \infty $. We refer the reader to Lemma 2.1 of \cite{bahouri_chemin_danchin} for a proof.

\subsection{The paraproduct formula}\label{sec_paraproduct_formula}
Here we concern ourselves with a structure of a Littlewood-Paley projection of a product of two functions, $P_j (fg)$. One could obviously write $f=\sum_{k\in \ZZ} P_kf$ (and similarly for $g$) to obtain that
\eqnb\label{bony_naive}
P_j(fg)= P_j \left( \sum_{k,m\in\ZZ } P_kf\,P_mg\right).
\eqne
However, since functions $p_j, p_k$ have pairwise disjoint supports for many pairs $j,k\in \ZZ$, one could speculate that some of the terms on the right-hand side of \eqref{bony_naive} vanish. This is indeed the case and 
\eqnb\label{bony}
\begin{split}
P_j (fg) &= P_j \left( {P}_{j\pm 2} f\, P_{\leq j-5 } g + P_{\leq j-5} f\,{P}_{j\pm 2} g + P_{j- 4,j+2} f\, P_{j\pm 4 } g +  \sum_{ k\geq j+3}P_{k } f \,P_{k\pm 2} g \right)\\
&= P_j \left( K_{loc,low} + K_{low,loc} + K_{loc} +K_{hh} \right),
\end{split}
\eqne
which is also known as \emph{Bony's decomposition formula}. For the sake of completeness we prove the formula below. Heuristically speaking, $K_{loc,low}$ corresponds to interactions between local (i.e. around $j$) modes of $f$ and low modes of $g$, $K_{low,loc}$ to interactions between low modes of $f$ and local modes of $g$, $K_{loc}$ to local interactions and $K_{hh}$ to interactions between high modes, see Figure \ref{paraproduct_cancellation} for a geometric interpretation of \eqref{bony}. 

We now prove \eqref{bony}. For this it is sufficient to show that 
\eqnb\label{cancellation}
P_j (P_kf\,P_m g) = 0\quad \text{ for } (k,m)\in R_1 \cup R_2 \cup R_3, 
\eqne
where $R_1$, $R_2$, $R_3$ are as sketched on Fig. \ref{paraproduct_cancellation}. The Fourier transform of $w\coloneqq P_j (P_kf\,P_m g)$ is
\[
\widehat{w}(\xi ) = p_j (\xi ) \int p_k (\eta ) \widehat{f} (\eta ) p_m (\xi-\eta ) \widehat{g} (\xi-\eta ) \d \eta
\]
We can assume that $|\xi |\in (2^{j-1},2^{j+1})$ (as otherwise $p_j (\xi)$ vanishes) and that $|\eta |\in (2^{k-1},2^{k+1})$ (as otherwise $p_k (\eta )$ vanishes).
\begin{enumerate}
\item[\emph{Case 1.}] $(k,m)\in R_1$. Suppose that $k\geq m$ (the opposite case is analogous). Then $j\geq k+3$ (see Fig. \ref{paraproduct_cancellation}) and so
\[
|\xi - \eta | \geq |\xi | - |\eta | \geq 2^{j-1} - 2^{k+1} \geq 2^{k+2}- 2^{k+1} = 2^{k+1} \geq 2^{m+1}.
\]
Thus $p_m (\xi-\eta )$ vanishes.
\item[\emph{Case 2.}] $(k,m)\in R_2 \cup R_3$. Suppose that $(k,m)\in R_2$ (the case $(k,m)\in R_3$ is analogous). Then $m\geq k+3$ and $m\geq j+3$ (see Fig. \ref{paraproduct_cancellation}) and so
\[
|\xi - \eta | \leq  |\xi | + |\eta | \leq 2^{j+1} + 2^{k+1} \leq  2\cdot  2^{m-2}  =2^{m-1}.
\]
Hence $p_m (\xi-\eta )$ vanishes as well, and so \eqref{cancellation} follows.
\end{enumerate}
\begin{figure}[h]
\centering
\captionsetup{width=.9\linewidth}
 \includegraphics[width=0.8\textwidth]{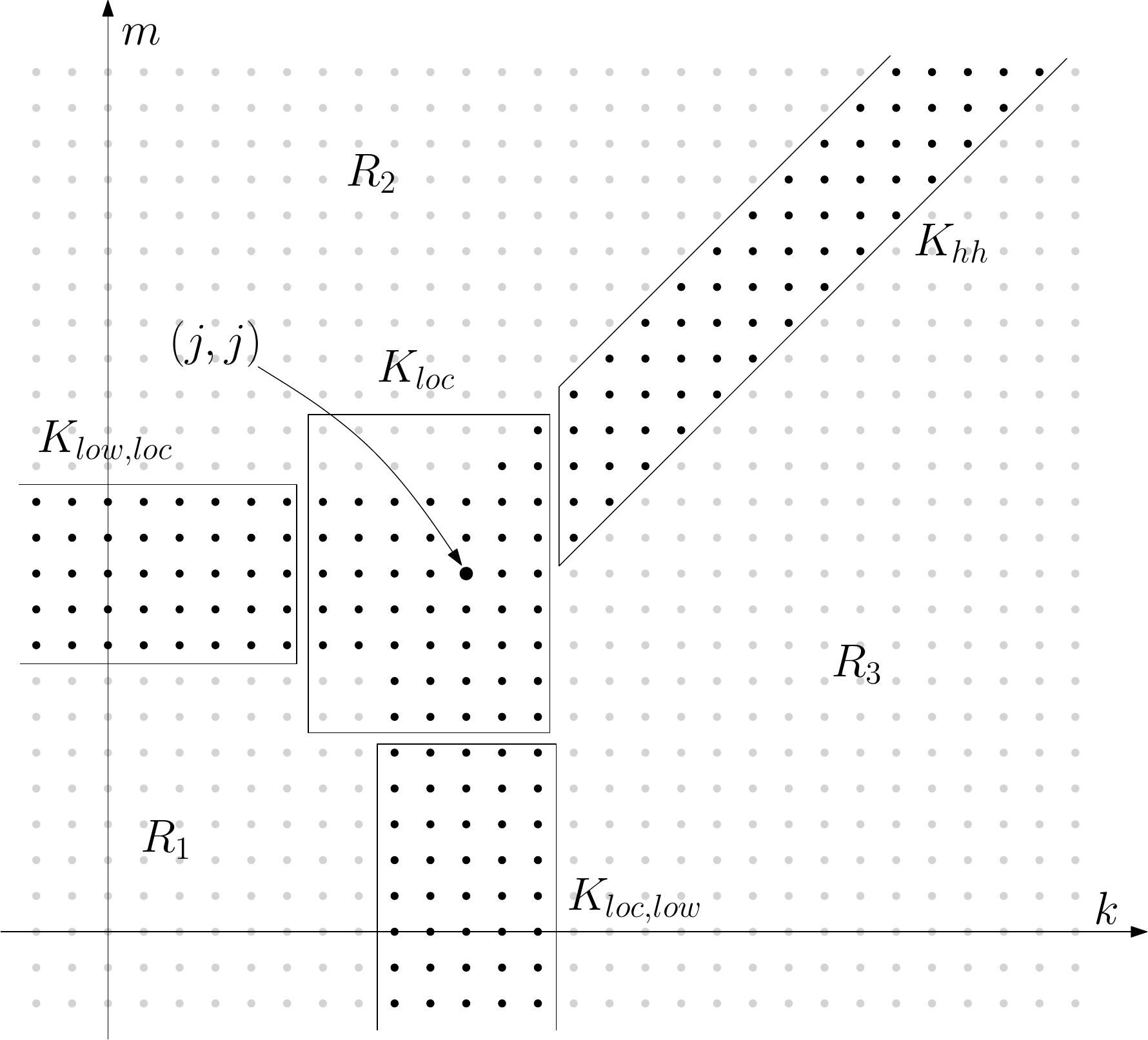}
 \nopagebreak
  \captionof{figure}{Sketch of the interpretation of the terms on the right-hand side of \eqref{bony}. The regions $R_1$, $R_2$, $R_3$ (consisting of grey dots) correspond to pairs $(k,m)$ for which $P_j(P_kf\,P_mg)$ vanishes, see the discussion following \eqref{cancellation}.}\label{paraproduct_cancellation} 
\end{figure}

\subsection{Moving bump functions across Littlewood--Paley projections}\label{sec_moving_bumps_fcns}
Here we show the following
\begin{lemma}\label{lem_moving_bump_functions_across_LP}
Let $\phi_1, \phi_2 \colon \RR^3 \to [0,1]$ be such that their supports are separated by at least $d >2^{-j}$. Then 
\[
\| \phi_1 P_j (\phi_2 f) \|_q \leq c_K (d2^j )^{-2K+3}\| f \|_q
\]
for all $q\in [1,\infty ]$, $j\in \ZZ$, $K>0$ and $f\in L^q (\RR^3)$. Furthermore, if $| \nabla \phi_2 |\leq c\, d^{-1}$ then
\[
\| \phi_1 P_j (\phi_2 \nabla f) \|_q \leq c_K (d2^j )^{-2K+3} 2^{j} \| f \|_q
\]
\end{lemma}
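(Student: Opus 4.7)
My plan is to reduce both bounds to the kernel estimates \eqref{Lq_norm_of_pj_outside_delta_ball} and \eqref{Lq_norm_of_deriv_pj_outside_delta_ball} via Young's inequality, exploiting the spatial separation of the supports of $\phi_1$ and $\phi_2$.

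For the first estimate, I would write
\[
\phi_1(x)\, P_j(\phi_2 f)(x) = \phi_1(x) \int \check{p_j}(x-y)\,\phi_2(y)\, f(y)\, \d y,
\]
and observe that since $\supp \phi_1$ and $\supp \phi_2$ are separated by at least $d$, the integrand is nonzero only when $|x-y|\geq d$. Hence I can replace $\check{p_j}$ by its restriction to $B(d)^c$ and apply Young's convolution inequality to bound
\[
\| \phi_1 P_j(\phi_2 f)\|_q \leq \| \check{p_j}\|_{L^1(B(d)^c)}\, \| f\|_q,
\]
and the claim follows at once from \eqref{Lq_norm_of_pj_outside_delta_ball} with $q=1$.

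For the gradient version, the strategy is to integrate by parts in $y$ so as to shift $\nabla$ off of $f$, producing two terms,
\[
\int \check{p_j}(x-y)\,\phi_2(y)\, \nabla f(y)\, \d y = \int (\nabla \check{p_j})(x-y)\,\phi_2(y)\, f(y)\, \d y - \int \check{p_j}(x-y)\, \nabla \phi_2(y)\, f(y)\, \d y.
\]
Again the support condition forces the integrations to take place on $\{|x-y|\geq d\}$ once we multiply by $\phi_1(x)$. The first piece is controlled by $\|\nabla \check{p_j}\|_{L^1(B(d)^c)}\| f\|_q$, which by \eqref{Lq_norm_of_deriv_pj_outside_delta_ball} with $q=1$ gives the desired factor $(d2^j)^{-2K+3}\,2^j$. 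The second piece picks up the factor $\|\nabla \phi_2\|_\infty \leq c\,d^{-1}$, and using the first part together with $d^{-1}\leq 2^j$ (from $d>2^{-j}$) produces the same bound $c_K(d2^j)^{-2K+3}\,2^j \| f\|_q$.

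There is no serious obstacle here; the only minor care needed is to justify the integration by parts (straightforward if $f$ is Schwartz, and extended by density to $L^q$ using that the right-hand side is continuous in $f$) and to verify that the support separation really does cut off the convolution to $B(d)^c$ in each term, which is why the hypothesis $|\nabla \phi_2|\leq c\,d^{-1}$ is needed rather than a worse local estimate.
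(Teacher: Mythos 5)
Your proposal is correct and follows essentially the same route as the paper: restrict the convolution kernel to $B(d)^c$ using the support separation, apply Young's inequality together with \eqref{Lq_norm_of_pj_outside_delta_ball}, and for the gradient version integrate by parts to transfer $\nabla$ onto $\check{p_j}$ and $\phi_2$, then invoke \eqref{Lq_norm_of_deriv_pj_outside_delta_ball} and $d^{-1}\leq 2^j$. The added remark about justifying the integration by parts by density is a reasonable point of rigor but not a departure from the argument.
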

We will only use the lemma (and the corollary below) with $q=2$ or $q=1$.
\begin{proof}
We note that
\eqnb\label{moving_bumps_temp}\begin{split}
\phi_1 P_j (\phi_2 f) (x) &= \phi_1 (x) \int_{\mathrm{supp}\,\phi_2} \check{p_j}(x-y) \phi_2 (y) f(y) \d y\\
&= \phi_1 (x) \int_{\mathrm{supp}\,\phi_2} \chi_{|x-y|>d }\, \check{p_j}(x-y) \phi_2 (y) f(y) \d y
\end{split}
\eqne
since the supports of $\phi_1$, $\phi_2$ are at least $d $ apart. Thus using Young's inequality for convolutions
\[
\| \phi_1 P_j (\phi_2 f) \|_q \leq \| \check{p_j} \|_{L^1 (B(d )^c)} \| \phi_2 f \|_q \leq c_K (d 2^j)^{-2K + 3}  \| f \|_q
\]  
for any $K>0$, where we used \eqref{Lq_norm_of_pj_outside_delta_ball}. This shows the first claim of the lemma. The second claim follows by replacing $f$ by $\nabla f$ in \eqref{moving_bumps_temp}, integrating by parts, and using Young's inequality for convolutions to give
\[\begin{split}
\| \phi_1 P_j (\phi_2 \nabla f) \|_q &\leq c \| \nabla \check{p_j} \|_{L^1 (B(d )^c)} \| \phi_2 f \|_q + \|  \check{p_j} \|_{L^1 (B(d )^c)} \| \nabla \phi_2 f \|_q   \\
&\leq c_K (d 2^j)^{-2K + 3} 2^{j}  \| f \|_q,
\end{split}
\]  
where we also used the assumption that $|\nabla \phi_2 |\leq c\, d^{-1} < c \, 2^{j}$.
\end{proof}
In fact the same result is valid when $P_j$ is replaced by the composition of $P_j$ with any $0$-homogeneous multiplier (e.g. the Leray projector).
\begin{corollary}\label{cor_moving_bump_functions_across_LP+any_homog_mult}
Let $M$ be a bounded, $0$-homogeneous multiplier (i.e. $\widehat{Mf}(\xi )= m (\xi ) \widehat{f}(\xi )$, where $m(\lambda \xi )=m(\xi )$ for any $\lambda >0$). Let $\phi_1, \phi_2 \colon \RR^3 \to [0,1]$ be such that their supports are separated by at least $d >2^{-j}$. Then 
\[
\| \phi_1 M P_j (\phi_2 \nabla  f) \|_q \leq c_K  (d2^j )^{-2K+3 }  2^j \| f \|_q
\]
for all $q\in[1,\infty ]$, $j\in \ZZ$, $K>0$ and $f\in L^q (\RR^3)$.
\end{corollary}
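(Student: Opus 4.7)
The plan is to reduce to the proof of Lemma~\ref{lem_moving_bump_functions_across_LP} almost verbatim, by observing that $MP_j$ is itself a Fourier multiplier whose symbol is compactly supported, localised in the annulus $\{2^{j-1}\leq |\xi|\leq 2^{j+1}\}$, and whose convolution kernel enjoys the same pointwise decay as $\check p_j$. Concretely, set $\lambda_j(\xi)\coloneqq m(\xi)p_j(\xi)$. Since $m$ is $0$-homogeneous and $p_j(\xi)=p(2^{-j}\xi)$, a change of variables gives
\[
\lambda_j(\xi) = m(2^{-j}\xi)\,p(2^{-j}\xi) = \lambda(2^{-j}\xi),\qquad \lambda(\xi)\coloneqq m(\xi)p(\xi).
\]
Crucially, $\supp p\subset B(2)\setminus B(1/2)$ stays away from the origin, so that the (possible) singularity of $m$ at $\xi=0$ is harmless and $\lambda\in C_0^\infty(\RR^3)$ is a fixed, $j$-independent bump function. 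Thus $MP_j$ is the convolution operator with kernel $\check{\lambda_j}$.

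Next, I would derive the pointwise kernel estimates for $\check{\lambda_j}$ exactly as for $\check{p_j}$ in \eqref{pj_bound}--\eqref{deriv_pj_bound}. Indeed, integrating by parts $2K$ times against $(-4\pi^2 |y|^2)^{-K}\Delta^K_\xi \e^{2\pi i y\cdot \xi}$ and using that $\Delta^K\lambda\in C_0^\infty$ gives
\[
|\check{\lambda_j}(y)|\leq c_K(2^j|y|)^{-2K}2^{3j},\qquad |\nabla\check{\lambda_j}(y)|\leq c_K(2^j|y|)^{-2K}2^{4j},
\]
for all $y\neq 0$ and $j\in\ZZ$. Integrating these estimates over $B(d)^c$ yields the $L^1$-bounds
\[
\|\check{\lambda_j}\|_{L^1(B(d)^c)}\leq c_K(d\,2^j)^{-2K+3},\qquad \|\nabla\check{\lambda_j}\|_{L^1(B(d)^c)}\leq c_K(d\,2^j)^{-2K+3}\,2^j,
\]
which are precisely the analogues of the bounds used in the proof of the lemma.

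With these ingredients in hand, the proof of the lemma goes through line-by-line: since $\supp\phi_1$ and $\supp\phi_2$ are separated by at least $d$,
\[
\phi_1(x)\,MP_j(\phi_2\nabla f)(x)=\phi_1(x)\int_{|x-y|>d}\check{\lambda_j}(x-y)\,\phi_2(y)\,\nabla f(y)\,\d y;
\]
integrating by parts in $y$ (the boundary contribution at $|x-y|=d$ vanishes because $\phi_2$ is supported there away from $\supp\phi_1$) and applying Young's convolution inequality one obtains, as in the proof of the lemma,
\[
\|\phi_1 MP_j(\phi_2\nabla f)\|_q\leq \|\nabla\check{\lambda_j}\|_{L^1(B(d)^c)}\|\phi_2 f\|_q+\|\check{\lambda_j}\|_{L^1(B(d)^c)}\|\nabla\phi_2\cdot f\|_q.
\]
Using $\|\nabla\phi_2\|_\infty\leq c/d\leq c\,2^j$ (the same tacit assumption as in the lemma) together with $d^{-1}\leq 2^j$, both terms are bounded by $c_K(d\,2^j)^{-2K+3}\,2^j\,\|f\|_q$, which gives the claim. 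There is no real obstacle here; the only thing one must be mindful of is ensuring that $\lambda=mp$ is smooth, which is guaranteed by the fact that $p$ vanishes near the origin.
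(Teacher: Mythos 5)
Your proof is correct and is precisely the argument the paper intends: the paper gives no proof of this corollary, only the remark preceding it ("In fact the same result is valid when $P_j$ is replaced by the composition of $P_j$ with any $0$-homogeneous multiplier"), and your reduction — noting that the symbol of $MP_j$ equals $\lambda(2^{-j}\xi)$ for the fixed, compactly supported bump $\lambda=mp$, and then rerunning the kernel-decay and Young's-inequality argument of Lemma~\ref{lem_moving_bump_functions_across_LP} — supplies exactly the missing verification. One small caveat worth being aware of (and which the paper shares): for $\lambda=mp$ to lie in $C_0^\infty(\RR^3)$ you need $m$ to be smooth on the annulus $\supp p$, which is not implied by the hypotheses ``bounded and $0$-homogeneous'' alone; it holds for the Leray projector (the only case the paper actually uses), but the statement as written tacitly assumes more regularity of $m$ away from the origin than it says. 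Also, as you correctly observe, the bound $|\nabla\phi_2|\leq c\,d^{-1}$ is tacitly assumed here as in the second claim of the lemma.
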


\subsection{Moving Littlewood-Paley projections across spatial cut-offs}
We say that $\phi \in C_0^\infty (\RR^3)$ is a $d $\emph{-cutoff} if $\mathrm{diam}(\mathrm{supp}\, \phi )\leq c\, d$ and $|D^l \phi |\leq c_l d^{-l}$ for any $l\geq 0$.

We denote by $e_d (j)$ any quantity that can be bounded (in absolute value) by $c_K 2^{cj} (d2^{j})^{c-K}$ for any given $K>0$. The point of such a bound is that it will articulate the the dependence of the size of the error in our main estimate (see Proposition \ref{prop_basic_estimate}) on both $j$ and $d$.

In this section we show that, roughly speaking, we can move Littlewood-Paley projections $P_j$ across $d $-cutoffs as long as $d > 2^{-j}$
\begin{lemma}\label{lem_moving_lp_across_bumps}
Given a $d$-cutoff $\phi$, $q\in [1,\infty ]$ and multiindices $\alpha, \beta $, with $|\beta |, |\alpha |\leq 3$, 
\[
\| (1-\widetilde{P}_j) D^\alpha (\phi P_j D^\beta f) \|_q \leq   e_d (j) \| f \|_q 
\]
for every $j$.
\end{lemma}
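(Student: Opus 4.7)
The plan is to decompose $1-\widetilde{P}_j = P_{\leq j-3} + P_{\geq j+3}$ and handle each piece separately. Set $g \coloneqq P_j D^\beta f$, so that $\|g\|_q \leq c\, 2^{j|\beta|} \|f\|_q$ (by Young's inequality, since $\|D^\beta \check{p_j}\|_1 \leq c\, 2^{j|\beta|}$) and $g$ has Fourier support in the annulus $\{|\xi|\in (2^{j-1},2^{j+1})\}$. First I would write $\phi=\sum_k P_k\phi$ and observe, by a direct computation of Fourier supports (in the spirit of Section \ref{sec_paraproduct_formula}), that each Littlewood--Paley piece of $\phi g$ receives contributions from only finitely many $k$'s: only $k\in\{j-2,\dots,j+2\}$ contribute to $P_{\leq j-3}(\phi g)$, while for each $k'\geq j+3$ only $k$ with $|k-k'|\leq 2$ (together with the boundary range $k\in\{j-2,\dots,j+2\}$ for $k'\in\{j+3,j+4\}$) contribute to $P_{k'}(\phi g)$.

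The second key ingredient is the pointwise bound
\[
\|P_k\phi\|_\infty \leq c_N\,(d\,2^k)^3(1+d\,2^k)^{-N}, \qquad N>0,
\]
which follows from the Fourier decay $|\widehat\phi(\xi)|\leq c_N d^3(1+d|\xi|)^{-N}$ (a routine integration by parts $N$ times using $\|D^N\phi\|_1\leq c_N d^{3-N}$ for a $d$-cutoff) after integration over $\{|\xi|\sim 2^k\}$. Combined with the derivative form of Bernstein's inequality $\|D^\alpha P_{k'}G\|_q\leq c\,2^{k'|\alpha|}\|P_{k'}G\|_q$ and its analogue for $P_{\leq j-3}$, the low piece then becomes
\[
\|P_{\leq j-3}D^\alpha(\phi g)\|_q \leq c\,2^{j|\alpha|}\sum_{k=j-2}^{j+2}\|P_k\phi\|_\infty\,\|g\|_q \leq c_N\, 2^{j(|\alpha|+|\beta|)}(d\,2^j)^{3-N}\|f\|_q,
\]
which is of the required form $e_d(j)\|f\|_q$ once $N$ is chosen sufficiently large. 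For the high piece, summing term-by-term over $k'$ using the same ingredients gives
\[
\sum_{k'\geq j+3}\|P_{k'}D^\alpha(\phi g)\|_q \leq c_N\,2^{j|\beta|}\|f\|_q\sum_{k'\geq j+3}2^{k'|\alpha|}(d\,2^{k'})^{3-N},
\]
and provided $N>|\alpha|+3$ the geometric series converges and is bounded by $c_N\,2^{j(|\alpha|+|\beta|)}(d\,2^j)^{3-N}\|f\|_q$, again of the form $e_d(j)\|f\|_q$.

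The main obstacle will be exactly this high-frequency summation, where the unbounded growth of $D^\alpha$ on high modes must be beaten by the decay $(d\,2^{k'})^{3-N}$ coming from the smoothness of $\phi$; this is possible only because $\phi\in C_0^\infty$, which permits $N$ to be chosen as large as needed. A secondary, bookkeeping issue is ensuring that the boundary cases $k'\in\{j+3,j+4\}$ are handled consistently, but these contribute only finitely many extra terms of the same asymptotic order and so do not affect the final estimate.
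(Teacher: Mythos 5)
Your proof is correct, and it takes a genuinely different route from the paper. The paper splits $\phi$ once by a sharp Fourier cutoff at $2^{j-2}$ into $\phi_1+\phi_2$: the $\phi_1$ part is annihilated outright because $\phi_1 P_j D^\beta f$ has Fourier support in the annulus where $\widetilde p_j\equiv 1$, and for $\phi_2$ the paper discards the projection $(1-\widetilde P_j)$ as merely a bounded operator, applies Leibniz, and extracts all the smallness from $\|\widehat{D^{\alpha_1}\phi_2}\|_1=e_d(j)$. You instead split the \emph{operator} $1-\widetilde P_j=P_{\leq j-3}+P_{\geq j+3}$, perform a full Littlewood--Paley decomposition of $\phi$, and use the Bony-style support analysis to isolate for each output block exactly which blocks $P_k\phi$ can contribute; the smallness then comes from the pointwise decay $\|P_k\phi\|_\infty\lesssim_N (d2^k)^{3-N}$. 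The two arguments exploit the same two facts --- a smooth compactly supported $\phi$ has rapidly decaying high modes, and low modes of $\phi$ cannot move the annulus $\{|\xi|\sim 2^j\}$ out of itself --- but your version keeps the frequency bookkeeping throughout and resolves the high-output piece block by block, which makes the source of the geometric gain $2^{k'(|\alpha|+3-N)}$ explicit, at the cost of the extra summation over $k'$ and the separate treatment of the boundary cases $k'\in\{j+3,j+4\}$. The paper's version is shorter because it commits early to the crude bound $\|(1-\widetilde P_j)h\|_q\leq c\|h\|_q$ and never needs to sum. Both land on a bound of the form $c_N\,2^{6j}(d2^j)^{3-N}\|f\|_q$, which is $e_d(j)\|f\|_q$.
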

\begin{proof} We write $\phi = \phi_1 + \phi_2$, where
\[
\begin{split}
\widehat{\phi_1} (\xi ) & \coloneqq \chi_{|\xi |\leq 2^{j-2}} \widehat{ \phi }(\xi ), \\
\widehat{\phi_2} (\xi ) & \coloneqq \chi_{|\xi |> 2^{j-2}} \widehat{ \phi }(\xi ).
\end{split}
\]
Note that 
\[
\widehat{(\phi_1 P_j D^\beta f) } (\xi ) = \int \widehat{\phi_1} (\xi - \eta ) p_j (\eta )  (2\pi i )^{|\beta |} \eta^\beta \widehat{f} (\eta ) \, \d \eta 
\]
is supported in $|\xi |\in (2^{j-2},2^{j+2})$ (as $\widehat{\phi_1} (\xi - \eta )$ is supported in $\{ |\xi - \eta |\leq 2^{j-2} \}$ and $p_j (\eta )$ is supported in $\{ 2^{j-1} < |\eta | < 2^{j+1} \}$). Since $\widetilde{p}_j(\xi)=1$ for such $\xi$ we obtain
\eqnb\label{phi1_is_ok}
\phi_1 P_j D^\beta f = \widetilde{P}_j \phi_1 P_j D^\beta f,
\eqne
and so it suffices to show that
\[
\| (1-\widetilde{P}_j ) D^\alpha (\phi_2 P_j D^\beta f ) \|_q \leq e_d (j) \| f \|_q
\]
We will show that 
\eqnb\label{will_show_this_1}
\| \widehat{ D^\alpha  \phi_2 } \|_1 \leq e_d (j)
\eqne
for every $|\alpha |\leq 3$. Then the claim follows by writing
\[
\begin{split}
\| (1-\widetilde{P}_j )D^\alpha  (\phi_2 P_j D^\beta f ) \|_q &  \leq \sum_{\alpha_1+\alpha_2 = \alpha}\| D^{\alpha_1}\phi_2 P_j D^{\alpha_2+\beta} f \|_q \\
&\leq \sum_{\alpha_1+\alpha_2 = \alpha}  \| D^{\alpha_1}\phi_2 \|_\infty \| P_j D^{\alpha_2+\beta} f \|_q \\
&\leq \sum_{|\alpha_1|\leq 3}\| \widehat{ D^{\alpha_1} \phi_2 } \|_1  \cdot 2^{6j} \| f \|_q \\
&\leq   e_d (j) \|f  \|_q.
\end{split}
\]

In order to see \eqref{will_show_this_1} we first note that
\[\begin{split}
| \widehat{D^\alpha \phi_2 } (\xi ) | &\leq c |\xi |^{|\alpha |} \left| \int \phi_2 (x) \mathrm{e}^{-2\pi \mathrm{i} x\cdot \xi } \, \d x\right|\\
&= c |\xi |^{|\alpha |}(4\pi^2 |\xi |^2 )^{-K} \left| \int \phi_2 (x) \Delta^K \mathrm{e}^{-2\pi \mathrm{i} x\cdot \xi } \, \d x\right| \\
&= c |\xi |^{|\alpha |}(4\pi^2 |\xi |^2 )^{-K} \left| \int \Delta^K \phi_2 (x)  \mathrm{e}^{-2\pi \mathrm{i} x\cdot \xi } \, \d x\right| \\
&\leq c_K |\xi |^{-2K+|\alpha |}  d^{-2K+3}
\end{split} 
\]
Thus
\[\begin{split}
\| \widehat{D^\alpha \phi_2} \|_1 &=c \int_{|\xi |>2^{j-2}} | \widehat{D^\alpha \phi_2 } (\xi )| \\
&\leq c_K d^{-2K+3}  \int_{|\xi |>2^{j-2}} |\xi |^{-2K+|\alpha |} \\
&=  c_K 2^{3j}(d 2^j )^{-2K+3},
\end{split}
\]
which gives \eqref{will_show_this_1}.
\end{proof}
Similarly one can put the Littlewood-Paley projection ``inside the cutoff''. In this case one can prove a similar statement as in Lemma \ref{lem_moving_lp_across_bumps}, but, since we will only need a simple version with no derivatives, we state a simplified statement.
\begin{corollary}\label{cor_put_Pj_inside}
Given a $d$-cutoff $\phi$ $\| P_j (\phi (1-{{P}}_{j\pm 2})f ) \| \leq e_d (j)$ for every $j$.
\end{corollary}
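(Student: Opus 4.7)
The plan is to obtain the corollary as the dual statement of Lemma \ref{lem_moving_lp_across_bumps} taken with $\alpha=\beta=0$ and $q=2$. The structural observation I would record first is that the multipliers $p_j$ and $\widetilde{p}_j$ are real-valued and radial (they are built from $p(\xi)=h(|\xi|)-h(2|\xi|)$), so $P_j$, $\widetilde{P}_j$, and hence $1-\widetilde{P}_j$, are self-adjoint operators on $L^2(\RR^3)$.

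With this in hand, I would estimate $\|P_j(\phi (1-\widetilde{P}_j)f)\|$ by duality. For any $\psi \in L^2$ with $\|\psi\|=1$, two applications of self-adjointness give
\[
\int \psi \cdot P_j(\phi (1-\widetilde{P}_j)f) = \int \phi\,(P_j\psi)\cdot (1-\widetilde{P}_j)f = \int (1-\widetilde{P}_j)(\phi\, P_j\psi)\cdot f.
\]
Lemma \ref{lem_moving_lp_across_bumps} applied to the bracketed factor (with $\alpha=\beta=0$, $q=2$, and the roles of $f$ and $\psi$ swapped) yields $\|(1-\widetilde{P}_j)(\phi P_j\psi)\|\leq e_d(j)\|\psi\|$. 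Cauchy--Schwarz then bounds the displayed integral by $e_d(j)\|\psi\|\|f\|$, and taking the supremum over $\psi$ with $\|\psi\|=1$ delivers $\|P_j(\phi(1-\widetilde{P}_j)f)\|\leq e_d(j)\|f\|$, which is the claim (with an implicit factor $\|f\|$ on the right).

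I do not anticipate any genuine obstacle: the argument is a one-line duality once self-adjointness is noted. Should one prefer a self-contained argument that does not invoke the lemma, the same bound can be produced by mirroring its proof, namely splitting $\phi=\phi_1+\phi_2$ with $\widehat{\phi_1}$ supported in $\{|\xi|\leq 2^{j-3}\}$. For the low-frequency piece, the Fourier support of $\widehat{\phi_1}\ast[(1-\widetilde{p}_j)\widehat{f}]$ is contained in $\{|\xi|<2^{j-1}\}\cup\{|\xi|>2^{j+1}\}$ (since $\widetilde{p}_j\equiv 1$ on the annulus $\{2^{j-2}\leq|\eta|\leq 2^{j+2}\}$), so $P_j(\phi_1(1-\widetilde{P}_j)f)=0$. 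For the high-frequency piece, $\|P_j(\phi_2(1-\widetilde{P}_j)f)\|\leq\|\phi_2\|_\infty\|f\|\leq \|\widehat{\phi_2}\|_1\|f\|$, and the integration-by-parts estimate $|\widehat{\phi}(\xi)|\leq c_K|\xi|^{-2K}d^{-2K+3}$ from the proof of Lemma \ref{lem_moving_lp_across_bumps} gives $\|\widehat{\phi_2}\|_1\leq c_K(d 2^j)^{3-2K}=e_d(j)$, yielding the same conclusion.
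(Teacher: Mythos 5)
Your proposal is correct. Your primary argument---self-adjointness of $P_j$ and $1-\widetilde{P}_j$ (since the symbols $p_j$ and $\widetilde p_j$ are real), followed by the duality chain
\[
\langle \psi, P_j(\phi(1-\widetilde P_j)f)\rangle=\langle (1-\widetilde P_j)(\phi P_j\psi), f\rangle
\]
and an appeal to Lemma~\ref{lem_moving_lp_across_bumps}---is a genuinely different route from the paper's. The paper does not exploit duality at all: it repeats the decomposition $\phi=\phi_1+\phi_2$ from the proof of Lemma~\ref{lem_moving_lp_across_bumps} (with $\widehat{\phi_1}$ supported in $|\xi|\leq 2^{j-2}$), shows directly that $P_j(\phi_1(1-\widetilde P_j)f)=0$ by examining the Fourier support of $\widehat{\phi_1}*[(1-\widetilde p_j)\widehat{f}]$, and handles $\phi_2$ by the same $\|\widehat{\phi_2}\|_1\leq e_d(j)$ bound. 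Your duality argument buys you a one-line derivation that reuses the lemma as a black box and makes the symmetry between ``moving $P_j$ inside the cutoff'' and ``moving it outside'' structurally transparent; the paper's direct argument, which your second (self-contained) paragraph essentially reproduces up to the unimportant threshold $2^{j-3}$ versus $2^{j-2}$, is more explicit but repeats work already done in the lemma. You are also right to flag that the corollary as stated drops a factor $\|f\|$ on the right-hand side; in the paper's application $f=u$ with $\|u\|\leq c$, so the omission is harmless, but your reading is the correct one.
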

\begin{proof}
The claim follows using the same decomposition as above, $\phi=\phi_1+\phi_2$. Since 
\[
\widehat{\phi (1-P_{j\pm 2})f } (\xi) = \int \widehat{\phi_1} (\xi - \eta ) (1-p_{j\pm 2} (\eta )) \widehat{f} (\eta ) \d \eta
\]
we see that (since $|\eta |\in (-\infty , 2^{j-2})\cup (2^{j+2},\infty )$) either $|\xi | \geq |\eta | - |\xi - \eta | \geq 2^{j+2} - 2^{j-2} \geq 2^{j+1}$ or $|\xi | \leq |\eta | + |\xi -\eta | \leq 2^{j-2}+ 2^{j-2} = 2^{j-1}$. In any case $p_j (\xi )=0$ and so $P_j (\phi_1 (1-\widetilde{\widetilde{P}}_j)f )=0$. The part involving $\phi_2$ can be estimated by $e_d (j)$ using the same argument as above.
\end{proof}
\subsection{Cubes}\label{sec_cubes}
We denote by $Q$ any open cube in $\RR^3$. Given $a>1$ we denote by $aQ$ the cube with the same center as $Q$ and $a$ times larger sidelength. We sometimes write $Q(x)$ to emphasize that cube $Q$ is centered at a point $x\in \RR^3$.
Given an open cube $Q$ of sidelength $d>0$ we let $\phi_Q\in C_0^\infty (\RR^3 ; [0,1])$ be a $d$-cutoff such that 
\eqnb\label{what_is_phi_Q}
\phi_Q=1\text{ on }Q,\quad \supp \,\phi_Q \subset 7Q/6,\quad \text{ and }\quad \| \nabla^k \phi_Q \|_\infty \leq C_k d^{-k} .
\eqne
Note that  
\eqnb\label{bound_sup_on_derivs_of_phi_in_FS}
|\xi |^k |\widehat{\phi_{Q} } (\xi )| \leq c_k d^{3-k}\qquad \text{ for } \xi \in \RR^3,
\eqne
which can be shown by a direct computation.

\subsection{Localised Bernstein inequalities}
If $Q$ is a cube of sidelength $d>2^{-j}$ then
\eqnb\label{localised_bernstein_ineq_single}
\| \phi_Q  P_j f\|_q \leq c\, 2^{3j\left( \frac{1}{2} - \frac{1}{q} \right)} \| \phi_Q P_j f\| + e_d (j) \| f \|_q,
\eqne
due to Lemma \ref{lem_moving_lp_across_bumps} and the classical Bernstein inequality \eqref{bernstein_ineq_single}.

\subsection{Absolute continuity}
Here we state two lemmas that will help us (in Step~1 of the proof of Proposition~\ref{prop_basic_estimate}) in proving the main estimate for Leray--Hopf weak solutions.
\begin{lemma}\label{lem_abs_cont_1}
Suppose that $f\colon [a,b] \to \RR$ is continuous and such that $f'\in L^1 (a,b)$. Then
\[
f(t)=f(s) + \int_s^t f'(\tau ) \d \tau
\]
for every $s,t \in (a,b)$.
\end{lemma}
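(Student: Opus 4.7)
The statement is a form of the fundamental theorem of calculus for continuous $W^{1,1}$ functions, and the key point is that $f'$ must be interpreted as the distributional derivative (otherwise the Cantor function, which is continuous and has pointwise derivative $0$ a.e., would be a counterexample). Under this interpretation, $f$ is continuous and $f\in W^{1,1}_{\mathrm{loc}}(a,b)$, and our goal is to show that such an $f$ is automatically absolutely continuous and satisfies the classical FTC.

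The plan is to proceed by a standard mollification argument. First I would fix $[s,t]\subset (a,b)$ and let $\eta>0$ be small enough so that $[s-\eta,t+\eta]\subset (a,b)$. Let $\rho_\eps$ be a standard one-dimensional mollifier supported in $(-\eps,\eps)$ with $\eps<\eta$, and set $f_\eps\coloneqq f\ast \rho_\eps$ on $[s,t]$. Then $f_\eps\in C^\infty([s,t])$, and because mollification commutes with distributional differentiation we have $f_\eps'=f'\ast \rho_\eps$ on $[s,t]$. The classical FTC then gives
\[
f_\eps(t)-f_\eps(s)=\int_s^t f_\eps'(\tau)\,\d\tau.
\]

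Next I would pass to the limit $\eps\to 0^+$ on both sides. On the left, since $f$ is continuous on the compact set $[s-\eta,t+\eta]$ it is uniformly continuous there, so $f_\eps\to f$ uniformly on $[s,t]$; in particular $f_\eps(t)\to f(t)$ and $f_\eps(s)\to f(s)$. On the right, the assumption $f'\in L^1(a,b)$ combined with the standard fact that $g\ast \rho_\eps \to g$ in $L^1_{\mathrm{loc}}$ for any $g\in L^1_{\mathrm{loc}}$ yields $f_\eps'\to f'$ in $L^1(s,t)$, so $\int_s^t f_\eps'(\tau)\,\d\tau\to \int_s^t f'(\tau)\,\d\tau$. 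Passing to the limit in the displayed identity gives the claimed equality for $s<t$, and the case $s>t$ follows by swapping the roles of $s$ and $t$.

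The only subtle point, and what I expect to be the main obstacle, is the interpretation of the hypothesis: one must read ``$f'\in L^1(a,b)$'' as a statement about the distributional derivative, since the Cantor function shows that the pointwise-a.e.\ derivative is insufficient. Once this is settled, mollification reduces the lemma to the smooth FTC, and no further technicalities arise because continuity of $f$ upgrades the $L^1_{\mathrm{loc}}$ convergence of mollifications to pointwise convergence at every point of $(a,b)$.
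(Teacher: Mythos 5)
The paper's own ``proof'' of this lemma is literally the single line ``This is elementary,'' so your mollification argument is simply the fleshed-out version of what the paper leaves implicit, and it is correct. The interpretive point you raise is the genuine crux: ``$f'\in L^1$'' must mean the distributional derivative, and the Cantor function is the right counterexample to the pointwise-a.e.\ reading. Given that, your argument is complete and standard: $f_\varepsilon = f\ast\rho_\varepsilon$ satisfies $f_\varepsilon' = f'\ast\rho_\varepsilon$ by commuting mollification with the distributional derivative, the smooth FTC applies, $f_\varepsilon\to f$ uniformly on $[s,t]$ by uniform continuity of $f$ on the compact set $[s-\eta,t+\eta]$, and $f_\varepsilon'\to f'$ in $L^1(s,t)$. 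One observation worth adding, since it concerns how the lemma is actually invoked: in Step~1 of the proof of Proposition~\ref{prop_basic_estimate} the time derivative of $u_{Q,j}^2$ is computed classically only on the open set $\bigcup_i(a_i,b_i)$ of regular times; the complement has measure zero but is not a priori countable, and your Cantor example shows that ``classical derivative exists a.e.\ and is $L^1$'' does not by itself give the distributional derivative. So the hypothesis of the lemma still has to be checked in the application, not merely on the regular intervals --- but that is a matter for the application, not for your proof of the lemma as stated, which is correct.
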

\begin{proof} This is elementary.
\end{proof}
\begin{lemma}
If $u(x,t)$ is weakly continuous in time on an interval $(a,b)$ with values in $L^2 (\RR^3)$ then $P_j u $ is strongly continuous in time into $L^2 (\Omega )$ on $(a,b)$ for any bounded domain $\Omega \subset \RR^3$
\end{lemma}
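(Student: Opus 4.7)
The plan is to exploit the fact that each $P_j$ is convolution with a Schwartz kernel, which turns the Littlewood--Paley projection into a pairing with a fixed $L^2$ test function and thereby converts weak continuity of $u(t)$ into pointwise continuity of $P_j u(\cdot,t)$. I would then upgrade pointwise convergence to $L^2(\Omega)$ convergence via a uniform $L^\infty$ bound together with the boundedness of $\Omega$.

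More concretely, first I would fix $t_0\in (a,b)$ and a compact subinterval $K\subset (a,b)$ containing a neighbourhood of $t_0$. Since $u$ is weakly continuous into $L^2$, for every $\varphi\in L^2$ the scalar function $t\mapsto \int u(t)\varphi$ is continuous, hence bounded on $K$. The uniform boundedness principle (applied to the family of continuous linear functionals $\{\varphi\mapsto \int u(t)\varphi\}_{t\in K}$ on $L^2$) then yields $M\coloneqq \sup_{t\in K}\| u(t)\| <\infty$.

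Second, since $\check{p_j}\in\mathcal{S}(\RR^3)\subset L^2$, for every $x\in\RR^3$ the function $\check{p_j}(x-\cdot)$ is a fixed element of $L^2$ and
\[
(P_j u)(x,t) \;=\; \int \check{p_j}(x-y)\,u(y,t)\,\d y \;=\; \bigl\langle u(t),\,\check{p_j}(x-\cdot)\bigr\rangle.
\]
Weak continuity of $u$ therefore gives $(P_j u)(x,t)\to (P_j u)(x,t_0)$ for every $x\in\RR^3$ as $t\to t_0$. Moreover, by Cauchy--Schwarz,
\[
|(P_j u)(x,t)|\;\leq\; \|\check{p_j}\|\,\| u(t)\| \;\leq\; \| \check{p_j}\|\,M
\]
uniformly in $x\in\RR^3$ and $t\in K$.

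Finally, since $\Omega$ is bounded the constant function $(\|\check{p_j}\|M)^2$ is integrable on $\Omega$, so the dominated convergence theorem applied to the integrand $|P_j u(x,t)-P_j u(x,t_0)|^2$ on $\Omega$ yields $\| P_j u(t)-P_j u(t_0)\|_{L^2(\Omega)}\to 0$ as $t\to t_0$. Since $t_0\in(a,b)$ was arbitrary, this gives the claim. I do not anticipate any serious obstacle; the only subtle point is the need to first establish the local $L^2$-bound $M<\infty$, which is why weak continuity on $(a,b)$ (as opposed to mere pointwise weak convergence) is used.
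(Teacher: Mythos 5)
Your proof is correct and follows essentially the same route as the paper: write $P_j u(x,t)$ as the $L^2$ pairing of $u(t)$ with the fixed kernel $\check{p_j}(x-\cdot)$, use weak continuity for pointwise-in-$x$ convergence, bound uniformly via Cauchy--Schwarz, and conclude with dominated convergence over the bounded set $\Omega$. The only (minor) difference is that you explicitly justify the local $L^2$ bound via the uniform boundedness principle on a compact subinterval, whereas the paper simply invokes boundedness as a known property of weakly continuous $L^2$-valued maps; your version is the more careful phrasing of the same fact.
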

\begin{proof}
We note that
\[
\begin{split}
\| P_j u(t) - P_j u (s) \|_{L^2(\Omega )}^2 &= \int_{\Omega } \left| \int \check{p}_j (x-y) \left( u(y,t)-u(y,s) \right) \d y \right|^2 \d x.
\end{split}
\]
Weak continuity of $u(t)$ gives that the integral inside the absolute value converges to $0$ as $t\to s$ (for any fixed $x$). Furthermore it is bounded by
\[
\| \check{p}_j \| \, \| u (t) - u (s) \| \leq c_j ,
\]
where we used the Cauchy-Schwarz inequality and the fact that $u$ is bounded in $L^2$ (a property of functions weakly continuous in $L^2$). Since the constant function $c_j^2$ is integrable on $\Omega $, the claim of the lemma follows from the Dominated Convergence Theorem.
\end{proof}

\section{The proof of the main result}\label{sec_main_result}
In this section we prove Theorem \ref{thm_main}, namely we will show that $d_H(S)\leq 5-4\alpha $, where $S$ is the singular set in space of a Leray--Hopf weak solution (recall \eqref{def_singset}). We will actually show that 
\[d_H(S) \leq 5-4\alpha +\varepsilon \]
for any 
\eqnb\label{how_small_is_eps}
\varepsilon \in (0, \min ((4\alpha -4)/3,1/20) ).
\eqne
We now fix such $\varepsilon $ and we allow every constant (denoted by ``$c$'') to depend on $\varepsilon$.

We say that a cube $Q$ is a $j$-cube if it has sidelength $2^{-j(1-\varepsilon)}$. The reason for considering such ``almost dyadic cubes'' (rather than the dyadic cubes of sidelength $2^{-j}$) is that $e_d(j)=e(j)$ for $d=2^{-j(1-\varepsilon )}$ (which is not true for $d=2^{-j}$).
We say that a cover of a set is a $j-$\emph{cover} if it consists only of $j$-cubes. We denote by $S_j (\Omega )$ any $j$-cover of $\Omega$ such that $\# S_j (\Omega ) \leq c (\mathrm{diam} (\Omega ) /2^{-j(1-\varepsilon )} )^3$.

Moreover, given a $j$-cube and $k\in \ZZ$ we denote the $k$-cube cocentric with $Q$ by $Q_k$, that is
\[
Q_k \coloneqq 2^{(j-k) (1-\varepsilon )} Q.
\]
\subsection{The main estimate}
Given a cube $Q$ and $j\in \ZZ$ we let 
\[
u_{Q,j} \coloneqq \| \phi_Q P_j u \|
\]
and we write
\[
u_{Q,j\pm 2} \coloneqq \sum_{k=j-2}^{j+2} u_{Q,k }
\]
We point out that $u_{Q,j} $ is a function of time, which we will often skip in our notation.

We start with a derivation of an estimate for $u_{Q,j}$ for any $j\in \ZZ$ and any cube $Q$ of sidelength $d>16 \cdot 2^{-j}$. 
\begin{proposition}[Main estimate]\label{prop_basic_estimate}
Let $u$ be a Leray-Hopf weak solution of the hyperdissipative Navier--Stokes equations \eqref{NSE_intro} on time interval $[0,\infty )$ and let $d>16\cdot 2^{-j}$. Then $u_{Q,j}$ is continuous on $[0,\infty )$ and 
\eqnb\label{basic_est}\begin{split}
\frac{\d }{\d t} u_{Q,j}^2  &\leq -c\, 2^{2\alpha j} u_{Q,j}^2+c \,u_{Q,j} \left( 2^j u_{3Q/2,j\pm 2} \sum_{\theta j \leq k\leq j-5 } 2^{3k/2} u_{\max(Q_k,3Q/2),k}  + 2^{5j/2}  u_{3Q/2,j\pm 4}^2 \right. \\
&\hspace{5cm}\left.+ 2^{3j/2} \sum_{k\geq j+1} 2^{k} u_{3Q/2,k}^2  \right) + e_{diss} + \sum_{k\geq \theta j } e_d (k)  \\
&= - G_{diss}  + c \,u_{Q,j} \left( G_{low,loc} + G_{loc} + G_{hh}  \right) +e_{diss} +e_{vl}+ \sum_{k\geq \theta j } e_d (k) 
\end{split}
\eqne
is satisfied in the integral sense (recall \eqref{integral_sense}) for any cube $Q$ of side-length $d$ and any $j\in \ZZ$, where 
\eqnb\label{def_of_theta}
\theta \coloneqq 2(2\alpha -1-\varepsilon)/3
\eqne
and
\[\begin{split}
e_{diss} &\coloneqq c\,2^{2\alpha j} (d2^j )^{-1} u_{3Q/2,j\pm 2 }^2 , \\
e_{vl} &\coloneqq c\, 2^{2\alpha j} 2^{-\varepsilon j}u_{3Q/2,j\pm 2}^2.
\end{split}
\]
\end{proposition}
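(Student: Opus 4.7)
The plan is to apply the Leray projection $T$ to \eqref{NSE_intro} (which eliminates the pressure and does not affect $u$ since $\operatorname{div}u=0$), test the resulting equation against $\phi_Q^2 P_j u$, and integrate in space to obtain the evolution of $\tfrac{1}{2}u_{Q,j}^2 = \tfrac{1}{2}\|\phi_Q P_j u\|^2$. First I would establish continuity of $u_{Q,j}$: since a Leray--Hopf weak solution is weakly continuous into $L^2$ (as noted in the Preliminaries), the second lemma of the absolute-continuity subsection gives that $P_j u$ is strongly continuous into $L^2(\Omega)$ on any bounded $\Omega$; because $\phi_Q$ is compactly supported, this implies $t\mapsto \|\phi_Q P_j u(t)\|$ is continuous. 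Since $u_t$ for a Leray--Hopf solution only exists as a distribution, I would derive the inequality in the integral sense by testing the weak formulation on intervals of regularity $(a_i,b_i)$ and passing to the limit using this continuity together with the strong energy inequality.

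For the dissipation term $\int \phi_Q^2\,P_j u\cdot P_j (-\Delta)^\alpha u$, the multiplier $P_j(-\Delta)^\alpha$ essentially acts as $2^{2\alpha j}$ on functions with Fourier support in the annulus of $p_j$, yielding the leading term $-c\,2^{2\alpha j}u_{Q,j}^2$. The cutoff $\phi_Q^2$ does not commute with the fractional Laplacian, but using Lemma \ref{lem_moving_lp_across_bumps} and Corollary \ref{cor_put_Pj_inside} (to insert/remove $\widetilde P_j$ across $\phi_{3Q/2}$) the commutator is controlled by $c\,2^{2\alpha j}(d2^j)^{-1} u_{3Q/2,j\pm 2}^2 = e_{diss}$ plus $j$-negligible errors bounded by $e_d(j)$.

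For the nonlinear term $\int \phi_Q^2\,P_j u\cdot P_j T\,\nabla\!\cdot(u\otimes u)$, I would expand the product $u\otimes u$ via Bony's decomposition \eqref{bony}, writing $P_j(u\otimes u)=P_j(K_{loc,low}+K_{low,loc}+K_{loc}+K_{hh})$. The mixed local--low piece $P_{j\pm 2}u\cdot P_{\le j-5}u$ produces, after Cauchy--Schwarz and Bernstein \eqref{localised_bernstein_ineq_single} applied to the low factor $P_k u$ (the $2^{3k/2}$ in $G_{low,loc}$ is precisely the $L^\infty$--$L^2$ Bernstein gain), a contribution of the form $2^j u_{3Q/2,j\pm 2}\sum_{k\le j-5} 2^{3k/2} u_{\max(Q_k,3Q/2),k}$; the derivative produces the prefactor $2^j$. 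The sum is split at $k=\theta j$ with $\theta=2(2\alpha-1-\varepsilon)/3$: modes $k\ge \theta j$ remain inside $G_{low,loc}$, while very low modes $k<\theta j$ are bounded via $\|P_k u\|\le \|u_0\|$ and absorbed into $e_{vl}$ (the threshold $\theta$ is chosen so that $2^{2\alpha j}\cdot 2^{-\varepsilon j}$ still absorbs these very low contributions). The local piece $K_{loc}$ yields $2^{5j/2}u_{3Q/2,j\pm 4}^2$ after Bernstein, and the high-high piece $K_{hh}=\sum_{k\ge j+3}P_k u\cdot P_{k\pm 2}u$ yields $2^{3j/2}\sum_{k\ge j+1}2^k u_{3Q/2,k}^2$ (the derivative is placed on the $k$-mode and Bernstein is applied in $L^\infty$). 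Moving the spatial cutoff $\phi_Q^2$ across each $P_k$ and $T P_j$ in the integrand is handled by Corollary \ref{cor_moving_bump_functions_across_LP+any_homog_mult}, producing the error budget $\sum_{k\ge\theta j}e_d(k)$.

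The main obstacle I expect is the bookkeeping in the paraproduct estimates: Bony's decomposition naturally produces global factors $P_k u$, whereas the right-hand side of \eqref{basic_est} must be phrased in terms of \emph{localized} quantities $u_{\max(Q_k,3Q/2),k}$ on the expanding family of cocentric cubes $Q_k$. Extracting these localizations requires repeated use of Lemma \ref{lem_moving_bump_functions_across_LP} to insert cutoffs $\phi_{Q_k}$ near each low-mode factor while ensuring the separation $d>2^{-k}$ holds; this is precisely the reason one works with $Q_k$ rather than $Q$ itself once $2^{-k}$ exceeds the sidelength of $Q$, and it is why the error terms $e_d(k)$ at scale $d$ must be summed over the admissible range $k\ge \theta j$ to remain controllable uniformly in $j$. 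All of the remaining steps --- the commutation of $T$ with $P_j$, the integration by parts transferring one derivative from $\nabla\!\cdot(u\otimes u)$ onto $\phi_Q^2 P_j u$, and the absorption of boundary/commutator terms into $e_{diss}$ and $e_d(k)$ --- are routine given the machinery assembled in Section \ref{sec_prelims}.
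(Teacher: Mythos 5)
Your overall strategy is the paper's: apply the Leray projection, test against $P_j(\phi_Q^2 P_j u)$, reduce to intervals of regularity and use weak-$L^2$ continuity plus the absolute-continuity lemma to restore the integral inequality, then treat the nonlinearity via Bony's decomposition with localized Bernstein inequalities and the cutoff-moving lemmas. The $K_{low,loc}$, $K_{loc}$ and $K_{hh}$ bookkeeping, the role of $\theta$ and $e_{vl}$, and the origin of the $2^{3k/2}$ and $2^j$ factors are all described correctly.

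There is, however, one genuine gap, in the dissipation step. You write that the commutator $[\phi_Q,(-\Delta)^\alpha]P_j u$ is ``controlled by $c\,2^{2\alpha j}(d2^j)^{-1}u_{3Q/2,j\pm 2}^2 = e_{diss}$'' using Lemma~\ref{lem_moving_lp_across_bumps} and Corollary~\ref{cor_put_Pj_inside}. Those two tools only move frequency projections $\widetilde P_j$ or $P_{j\pm 2}$ across spatial cutoffs and yield $j$-negligible errors $e_d(j)$; they say nothing about commuting the non-local operator $(-\Delta)^\alpha$ past $\phi_Q$. The $(d2^j)^{-1}$ factor in $e_{diss}$ is a genuinely first-order quantity (it cannot be made $e_d(j)$), and its derivation requires the dedicated calculation of Step~2.2 in the paper: split the commutator by the size of the frequency transfer $|\xi-\eta|$, and for the near-diagonal part $W_1$ Taylor-expand the symbol difference $|\eta|^{2\alpha}-|\xi|^{2\alpha}$ to fourth order around $|\xi|^2$; the leading term of that expansion, paired with $\|\nabla\phi_Q\|_\infty\le c\,d^{-1}$, is what produces $c\,2^{2\alpha j}(d2^j)^{-1}u_{3Q/2,j\pm 2}$. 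The far off-diagonal part $W_2$ and the Taylor remainder go into $e_d(j)$. This is precisely the subtlety the paper flags in Remark~\ref{rem2} as the reason $e_{diss}$ must be carried explicitly in the main estimate rather than absorbed as a negligible error, so you should not expect it to fall out of the general cutoff lemmas; a separate symbol-expansion argument is required.
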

Here $\max (Q_k, 3Q/2)$ denotes the larger of the cubes $Q_k$, $3Q/2$, $G_{diss}$ should be thought of as the \emph{dissipation term}, $G_{low,loc}$ the interaction between low (i.e. modes $k\leq j-5$) and local modes (i.e. modes $j\pm 2$), $G_{loc}$ the local interactions (i.e. including only the modes $j\pm 4$) and $G_{hh}$ the interactions between high modes (i.e. modes $k\geq j$). 

The role of the parameter $\theta $ is to separate the ``very low'' Littlewood-projections from the ``low'' Littlewood-Paley projections. That is (roughly speaking), given $j\in \NN$ we will not have to worry about the Littlewood-Paley projections $P_k$ with $k< \theta j$ (i.e. they will be effortlessly absorbed by the dissipation at the price of the error term $e_{vl}$ (``vl'' here stands for ``very low''), see \eqref{temp1_low_vs_verylow}-\eqref{temp2_low_vs_verylow} below for a detailed explanation), which is the reason why such modes are not included in $G_{low,loc}$. In fact $G_{low,loc}$ is (roughly speaking) the most dangerous term, as it represents, in a sense, the injection of energy from low scales to high scales, and we will need to use $G_{diss}$ to counteract it, see Step~5 in the proof of Theorem \ref{thm_reg_outside_E}.

The error term $e_{diss}$ appearing in the estimate is the error appearing when estimating the dissipation term and it cannot be estimated by $e_d (j)$. Its appearance is a drawback of the main estimate, but in our applications (in Theorem \ref{thm_j_good_gives_critical regularity} and Theorem \ref{thm_reg_outside_E}) it can be absorbed by $G_{diss}$.
\begin{proof}(of Proposition \ref{prop_basic_estimate})
Recall \eqref{int_of_reg} that a Leray-Hopf weak solution admits intervals of regularity. 

\emph{Step~1.} We show that it is sufficient to show \eqref{basic_est} on each of the intervals of regularity.\\

On each interval of regularity $(a,b)$ we apply the Leray projection (recall \eqref{def_of_T} to the first equation of \eqref{NSE_intro} to obtain 
\[
u_t +(- \Delta )^\alpha  u + T[(u\cdot \nabla )u] =0.
\]
Multiplying by $P_j (\phi_Q^2 {P_ju})$ and integrating in space we obtain (at any given time)
\[
\frac{1}{2}\frac{\d }{\d t} u_{Q,j}^2 = -\int (-\Delta )^\alpha u \, P_j (\phi_Q^2 {P_ju}) - \int T[(u\cdot \nabla )u] \, P_j (\phi_Q^2 {P_ju}) =: I + J.
\]
We note that $I,J\in L^1 (0,T)$ for every $T>0$. Indeed, by brutal estimates
\[
\begin{split}
|J| &=  \left| \int \phi_Q P_j T[(u\cdot \nabla )u] \, \phi_Q^2 {P_ju} \right| \\
&\leq   \left\|   P_j T[(u\cdot \nabla )u]  \right\|_1 \| P_j u \|_\infty \\
&\leq c \| u\| \| \nabla u\| \cdot 2^{3j/2} \| P_j u \| \\
&\leq c \, 2^{3j/2} \| \nabla u\|
\end{split}
\]
(where we used Bernstein inequality \eqref{bernstein_ineq_single} in the third line), which is integrable on $(0,T)$ for every $T>0$. That $I\in L^1 (0,T)$ for every $T>0$ is a consequence of Step~2 below. Thus, since $u(t)$ is weakly continuous with values in $L^2$ (recall Section \ref{sec_prelims}), Lemma \ref{lem_abs_cont_1} gives that \eqref{basic_est} is valid (in the integral sense) on $[0,\infty )$. 

Thus it suffices to show that $I+J$ can be estimated by the right-hand side of \eqref{basic_est}.\vspace{0.4cm}\\
\emph{Step~2.} We show that $I\leq - G_{diss} + e_{diss}  + e_d (j)$.\\
(Note that this gives in particular that $I\in L^1 (0,\infty )$, since (trivially) $u_{Q',j}\leq c$ for every cube $Q'$ and every $j$.)\\

We write
\[\begin{split}
I &= - \int \phi_Q (-\Delta )^\alpha P_j u \, \phi_Q {P_ju} \\
&= - \int (-\Delta )^\alpha \widetilde{P}_j ( \phi_Q P_j u ) \, \phi_Q {P_ju} - \int (-\Delta )^\alpha (1- \widetilde{P}_j ) ( \phi_Q P_j u ) \, \phi_Q {P_ju}   -\int [\phi_Q , (-\Delta )^\alpha ] P_j u \, \phi_Q {P_ju}  \\
&=: I_1 + I_2 + I_3 
\end{split} \]
Note that, due to the Plancherel theorem
\[\begin{split}
I_1 &= - c \int | \xi |^{2\alpha } \widetilde{p_j} (\xi ) | \widehat{v} (\xi )|^2 \d \xi \\
&\leq - c \,2^{2\alpha j } \int\widetilde{p_j} (\xi ) | \widehat{v} (\xi ) |^2 \d \xi \\
& =   -c\, 2^{2\alpha j} \int \widetilde{P}_j v\cdot {v}\\
& = - c\, 2^{2\alpha j } u_{Q,j}^2 + c\, 2^{2\alpha j } \int (1-\widetilde{P}_j )v \cdot {v}\\
&\leq  - c\, 2^{2\alpha j } u_{Q,j}^2 + c\, 2^{2\alpha j } \| (1 -\widetilde{P}_j ) v \|  \\
&= - G_{diss} + e_d(j)
\end{split}\]
where we wrote $v\coloneqq \phi_Q P_j u$ for brevity, and we used the fact that $\| v \|\leq c$ (recall \eqref{EI_prelims}) in the last two lines as well as Lemma \ref{lem_moving_lp_across_bumps} in the last line. \vspace{0.4cm}\\
\emph{Step~2.1} We show that $I_2 \leq e_d(j)$.\\

We write
\[
I_2 \leq \| (-\Delta )^\alpha (1- \widetilde{P}_j ) ( \phi_Q {P_ju} ) \| u_{Q,j},
\]
and we will show that 
\eqnb\label{1.1_temp}
\| (-\Delta )^\alpha (1- \widetilde{P}_j ) ( \phi_Q {P_ju} ) \| \leq e_d (j).
\eqne
(This completes this step as $u_{Q,j}\leq c$, as above.) Indeed, \eqref{1.1_temp} follows in a similar way as Lemma \ref{lem_moving_lp_across_bumps} by decomposing
\[
\phi_Q = \phi_1 + \phi_2 ,
\] 
where 
\[\begin{split}
\widehat{\phi_1} (\xi )  &\coloneqq \chi_{|\xi |\leq 2^{j-2}} \widehat{ \phi_Q }(\xi ),\\
\widehat{\phi_2} (\xi ) & \coloneqq \chi_{|\xi |> 2^{j-2}} \widehat{ \phi_Q }(\xi ).
\end{split}\]
We see that $\phi_1 P_j u = \widetilde{P}_j (\phi_1 P_j u) $ (because of the supports in Fourier space, cf. \eqref{phi1_is_ok}) and so it is sufficient to show that 
\[
\| (-\Delta )^\alpha  ( \phi_2 P_j u ) \| \leq e_d (j)
\]
(since the operator norm $\| 1- \widetilde{P}_j \|\leq 1$). Since the Fourier transform of $(-\Delta )^{\alpha } ( \phi_2 P_j u )$ is
\[\begin{split}
c| \xi |^{2\alpha } \int  \widehat{\phi_2 } (\xi - \eta ) p_j (\eta ) \widehat{u} ( \eta ) \,\d \eta &\leq  c \int  |\xi - \eta |^{2\alpha } |  \widehat{\phi_2 } (\xi - \eta ) p_j (\eta ) \widehat{u} ( \eta ) | \d \eta\\
&\hspace{0.3cm}+c \int |\eta |^{2\alpha }   |  \widehat{\phi_2 } (\xi - \eta )  p_j (\eta ) \widehat{u} ( \eta ) | \d \eta
\end{split}
\]
we obtain
 \[
 \| (-\Delta )^\alpha  ( \phi_2 P_j u ) \| \leq c \| u \| \int_{|\xi |>2^{j-2}} |\xi |^{2\alpha } |\widehat{\phi_2} (\xi ) | \d \xi  + c \| \widehat{\phi_2 } \|_1 \| (-\Delta )^{2\alpha } P_j u \|  \leq e_d (j), 
 \]
where we used the Plancherel theorem, \eqref{will_show_this_1} and the fact that $\| \,|\cdot |^{2\alpha } \widehat{\phi_2 } (\cdot ) \|_1\leq e_d (j)$ (which follows in the same way as \eqref{will_show_this_1}).\vspace{0.4cm}\\
\emph{Step~2.2.} We show that $I_3\leq  e_{diss}  + e_d(j)$.\\

We have
\[
I_3 \leq \| [\phi_Q , (-\Delta )^{\alpha } ] P_j u \| u_{Q,j}
\]
For brevity we let $v\coloneqq P_j (\phi_{3Q/2} u )$, $\phi \coloneqq \phi_Q$ and 
\[
W \coloneqq [\phi ,(-\Delta )^\alpha ] v.
\]
We will show below that 
\[
\| W \| \leq   c\,2^{2\alpha j} (d\,2^j)^{-1} u_{3Q/2,j} + e_d(j),
\]
and we will show in Step~2.2c that
\eqnb\label{to_show_1.2c}
\| W \| = \| [\phi , (-\Delta )^{\alpha } ] P_j u \| + e_d(j),
\eqne
from which the claim of this step follows (and so, together with Step~2.1, finishes Step~2). Since
\[
\begin{split}
\widehat{W} (\xi ) &=c\int \left( |\eta |^{2\alpha } - |\xi |^{2\alpha } \right) \widehat{\phi } (\xi - \eta )  \widehat{v } (\eta ) \, \d \eta,
\end{split}
\]
we can decompose $W$ by writing $\int= \int_{|\eta - \xi |\leq 2^{j-3}} + \int_{|\eta - \xi |>2^{j-3}}$, that is 
\[W=W_1+W_2,\]
where
\[\begin{split}
\widehat{W_1} (\xi )&\coloneqq c\int_{|\eta - \xi |\leq 2^{j-3}} \left( |\eta |^{2\alpha } - |\xi |^{2\alpha } \right) \widehat{\phi } (\xi - \eta )  \widehat{v } (\eta ) \, \d \eta,\\
\widehat{W_2} (\xi )&\coloneqq c\int_{|\eta - \xi |>2^{j-3}} \left( |\eta |^{2\alpha } - |\xi |^{2\alpha } \right) \widehat{\phi } (\xi - \eta )  \widehat{v } (\eta ) \, \d \eta.
\end{split}\]
We will show (in Step~2.2b below) that $\| W_2 \|\leq e_d (j)$. As for $W_1$, note that, since $\mathrm{supp}\,{p_j }\subset \{ |\eta |\in (2^{j-1},2^{j+1}) \}$, 
\eqnb\label{supp_of_FT_of_Q1} \mathrm{supp}\,\widehat{W_1}\subset \{ |\xi |\in (  2^{j-2}, 2^{j+2} ) \} .\eqne
Setting $f(z)\coloneqq z^\alpha $ and expanding it in the Taylor series around $|\xi |^2$ we obtain
\[
|\eta |^{2\alpha } - |\xi |^{2\alpha } = \sum_{k=1}^3 \frac{ f^{(k)} (|\xi |^2) }{k !} \left( |\eta |^2 - |\xi |^2 \right)^k  +\frac{ f^{(4)} (z_0 ) }{24} \left( |\eta |^2 - |\xi |^2 \right)^4,
\]
where $z_0$ belongs to the interval with endpoints $|\eta |^2$ and $|\xi |^2$ (and so in particular $z_0\in [2^{2j-4},2^{2j+4}]$). Writing $ |\eta |^2 - |\xi |^2 = \sum_{i=1}^3 (\eta_i - \xi_i )(\eta_i + \xi_i )$ and taking $k$-th power we obtain
\[
|\eta |^{2\alpha } - |\xi |^{2\alpha } = \sum_{k=1}^4 c_k f^{(k)} (z)  \sum_{|\beta |=k, |\gamma_1 |+ |\gamma_2 |=k } c_{ \beta \gamma_1\gamma_2} (\eta - \xi )^\beta \eta^{\gamma_1} \xi^{\gamma_2},
\]
where $z=|\xi |^2$ (for $k\leq 3$) or $z=z_0$ (for $k=4$).
Thus, noting that $| f^{(k)} (z) |\leq c \,2^{j(2\alpha -2k)}$,
\[\begin{split}
|\widehat{W_1} (\xi )| &\leq c  \sum_{k=1}^3 \left|   f^{(k)} (|\xi |^2 )    \right|\sum_{|\beta |=k, |\gamma_1 |+ |\gamma_2 |=k } |\xi |^{|\gamma_2 |} \left| \int_{|\eta - \xi |\leq 2^{j-3}} ( \xi - \eta )^\beta  \widehat{\phi } (\xi - \eta ) \eta^{\gamma_1} \widehat{v } (\eta )    \, \d \eta \right| \\
&+ c\sum_{|\beta |=4, |\gamma_1 |+ |\gamma_2 |=4 } |\xi |^{|\gamma_2 |} \left| \int_{|\eta - \xi |\leq 2^{j-3}} f^{(k)} (z_0) ( \xi - \eta )^\beta  \widehat{\phi } (\xi - \eta ) \eta^{\gamma_1} \widehat{v } (\eta )    \, \d \eta \right|\\
&\leq c  \sum_{k=1}^3 \sum_{|\beta |=k, |\gamma_1 |+ |\gamma_2 |=k } 2^{j(2\alpha -2k +|\gamma_2 | )}  \left| \int_{|\eta - \xi |\leq 2^{j-3}} ( \xi - \eta )^\beta  \widehat{\phi } (\xi - \eta ) \eta^{\gamma_1} \widehat{v } (\eta )    \, \d \eta \right| \\
&+ c \,2^{j(2\alpha - 4 )} \int_{|\eta - \xi |\leq 2^{j-3}}    | \xi - \eta |^4 \left|  \widehat{\phi } (\xi - \eta )  \widehat{v } (\eta )     \right| \d \eta \\
&\leq c  \sum_{k=1}^3 \sum_{|\beta |=k, |\gamma_1 |+ |\gamma_2 |=k } 2^{j(2\alpha -2k +|\gamma_2 | )}  \left| \widehat{ D^\beta \phi D^{\gamma_1} v } (\xi )  \right|  \\
&+ c  \sum_{k=1}^3 \sum_{|\beta |=k, |\gamma_1 |+ |\gamma_2 |=k } 2^{j(2\alpha -2k +|\gamma_2 | )}  \left| \int_{|\eta - \xi |> 2^{j-3}} ( \xi - \eta )^\beta  \widehat{\phi } (\xi - \eta ) \eta^{\gamma_1} \widehat{v } (\eta )    \, \d \eta \right| \\
&+ c \,2^{j(2\alpha - 4 )} \int_{|\eta - \xi |\leq 2^{j-3}}    | \xi - \eta |^4 \left|  \widehat{\phi } (\xi - \eta )  \widehat{v } (\eta )     \right| \d \eta\\
&=: c  \sum_{k=1}^3 \sum_{|\beta |=k, |\gamma_1 |+ |\gamma_2 |=k } 2^{j(2\alpha -2k +|\gamma_2 | )}  \left| \widehat{ D^\beta  \phi D^{\gamma_1} v } (\xi )  \right|  + \mathrm{Err}_1 (\xi )+ \mathrm{Err}_2 (\xi ).
\end{split}
\]
We will show below (in Step~2.2a below) that 
\[
\| \mathrm{Err}_1 \|, \| \mathrm{Err}_2 \|  \leq c 2^{2\alpha j } (d\, 2^j  )^{-1} u_{3Q/2, j\pm 2}  + e_d (j) .
\]
This, together with the Plancherel identity gives
\[\begin{split}
\| W_1 \| &\leq c  \sum_{k=1}^3 \sum_{|\beta  |=k, |\gamma_1 |+ |\gamma_2 |=k } 2^{j(2\alpha -2k +|\gamma_2 | )}  \|  D^\beta \phi D^{\gamma_1} v   \| + c 2^{2\alpha j } (d\, 2^j  )^{-1} u_{3Q/2, j\pm 2}  + e_d (j) \\
&\leq  c  \sum_{k=1}^3 2^{2\alpha j }  (d\,2^j )^{-k}\|    v   \|  +    c 2^{2\alpha j } (d\, 2^j  )^{-1} u_{3Q/2, j\pm 2}  + e_d (j)  
\end{split}
\]
where we used the facts that $|\nabla^k \phi |\leq c\, d^{-k}$ for $k=1,2,3$, and $\| D^{\gamma_1} v \| \leq c\,2^{j|\gamma_2 |} \| v\|$ (by applying Lemma \ref{lem_moving_lp_across_bumps}). Since $d>2^{-j}$ and $\| v \| \leq  \| \phi_{3Q/2} \widetilde{P_j} u \| + e_d(j) = u_{3Q/2,j\pm 2 } + e_d(j)$ (where we applied Corollary \ref{cor_put_Pj_inside}) we thus arrive at 
\[
\| W_1 \| \leq c 2^{2\alpha j } (d\, 2^j  )^{-1} u_{3Q/2, j\pm 2}  + e_d (j)  ,
\]
as required.\vspace{0.4cm}\\
\emph{Step~2.2a} We show that $\| \mathrm{Err}_1 \| \leq e_d (j)$ and $\| \mathrm{Err}_2 \| \leq c 2^{2\alpha j } (d\, 2^j  )^{-1} u_{3Q/2, j\pm 2} + e_d (j)$.\\

We focus on $\mathrm{Err}_1$ first. We have
\[\begin{split}
\mathrm{Err}_1 (\xi ) &= c  \sum_{k=1}^3 \sum_{|\beta |=k, |\gamma_1 |+ |\gamma_2 |=k } 2^{j(2\alpha -2k +|\gamma_2 | )}  \left| \int_{|\eta - \xi |> 2^{j-3}} ( \xi - \eta )^\beta  \widehat{\phi } (\xi - \eta ) \eta^{\gamma_1} \widehat{v } (\eta )    \, \d \eta \right|\\
&\leq c  \sum_{k=1}^3 2^{j(2\alpha -k )}   \int_{|\eta - \xi |> 2^{j-3}} | \xi - \eta |^k   \left| \widehat{\phi } (\xi - \eta )  \widehat{v } (\eta )      \right| \d \eta\\
&\leq c  2^{j(2\alpha -K )}   \int_{|\eta - \xi |> 2^{j-3}} | \xi - \eta |^K   \left| \widehat{\phi } (\xi - \eta )  \widehat{v } (\eta )      \right| \d \eta\\
&\leq  c_K \,2^{j(2\alpha-K )} d^{1-K}\int_{|\eta - \xi |>2^j } |\xi - \eta |^{-2} \left| \widehat{v} (\eta ) \right| \d \eta\\
&\leq c_K \,2^{j(2\alpha -1)} (d \,2^j)^{(1-K)} \left( \int_{|\eta - \xi |>2^{j-5} } | \xi - \eta |^{-4} \d \eta \right)^{1/2}   
\end{split}
\]
for every $K>3$, where we used \eqref{bound_sup_on_derivs_of_phi_in_FS} in the fourth line as well as the Cauchy-Schwarz inequality, \eqref{def_of_pj} and the fact that $\|v\|\leq \| u \|\leq c$ (recall \eqref{EI_prelims}) in the last line. Thus $\mathrm{Err}_1 (\xi )\leq  e_d (j)$ for every $\xi \in \RR^3$, and hence (since $|\xi |\leq 2^{j+2}$) also $\| \mathrm{Err}_1 \| \leq e_d (j)$.

As for $\mathrm{Err}_2$ we write 
\[\begin{split}
\mathrm{Err}_2 (\xi ) &= c\, 2^{j(2\alpha -4) }\int_{|\eta - \xi |\leq 2^{j-3}} |\xi - \eta |^4  \left| \widehat{\phi } (\xi - \eta )  \widehat{v } (\eta ) \right|   \d \eta\\
&\leq c\, 2^{j(2\alpha -4) } d^{-1} \int_{|\eta - \xi |\leq 2^{j-3}}   \left|   \widehat{v } (\eta ) \right| \d \eta\\
&\leq c\, 2^{j(2\alpha -3/2)} (d \,2^j )^{-1} \| v \|\\
&= c\, 2^{j(2\alpha -3/2)} (d \,2^j )^{-1} \| P_j \phi_{3Q/2} u \|\\
&\leq c\, 2^{j(2\alpha -3/2)} (d \,2^j )^{-1} u_{3Q/2,j\pm 2} + e_d (j)
\end{split}
\]
where we used \eqref{bound_sup_on_derivs_of_phi_in_FS} in the second line, the Cauchy-Schwarz inequality (as above) in the third line, and Corollary \ref{cor_put_Pj_inside} in the last line. Thus
\[
\| \mathrm{Err}_2 \| \leq c\, 2^{2\alpha j} (d\,2^j )^{-1} u_{3Q/2,j\pm 2} + e_d (j),
\]
as required.\vspace{0.4cm}\\
\emph{Step~2.2b} We show that $\| W_2 \| \leq e_d(j)$.\\

Indeed, since $|\xi |^{2\alpha } \leq c |\eta |^{2\alpha } + c |\xi - \eta |^{2\alpha}$ we obtain for any $K>2\alpha $
\[\begin{split}
\left| \widehat{W_2} (\xi ) \right| &= \left| \int_{|\eta - \xi |>2^{j-5}} \left( |\eta |^{2\alpha } - |\xi |^{2\alpha } \right) \widehat{\phi } (\xi - \eta ) \widehat{v } (\eta ) \, \d \eta\right| \\
&\leq  c \int_{|\eta - \xi |>2^{j-5}} |\eta |^{2\alpha }  \left| \widehat{\phi } (\xi - \eta )  \widehat{v }  (\eta )\right|  \d \eta + c \int_{|\eta - \xi |>2^{j-5}} |\xi -\eta |^{2\alpha }  \left| \widehat{\phi } (\xi - \eta )  \widehat{v }  (\eta ) \right|  \d \eta  \\
&\leq c_K 2^{ j(2\alpha - K)} \int_{|\eta - \xi |>2^{j-5}} |\xi - \eta |^{K }  \left| \widehat{\phi } (\xi - \eta )  \widehat{v }  (\eta )\right| \d \eta ,
\end{split}
\]
where we used the inequality $1< c_K | \xi - \eta  |^K 2^{-jK}$ as well as $|\eta |\leq c 2^{j}$ inside the first integral in the second line and the inequality $1\leq c_K |\xi - \eta |^{K-2\alpha } 2^{-j(K-2\alpha )}$ inside the second integral. Thus, using the Plancherel identity and Young's inequality for convolutions
\[\begin{split}
\| W_2 \| &= \| \widehat{ W_2} \| \\
&\leq c_K 2^{j(2\alpha - K)} \| v \| \int_{|\eta |> 2^{j-5}} |\eta |^K \left| \widehat{\phi } (\eta ) \right| \, \d \eta \\
&\leq c_K 2^{j(2\alpha - K)} \int_{|\eta |> 2^{j-5}} |\eta |^{K+4} \left| \widehat{\phi } (\eta ) \right| \, |\eta |^{-4}\, \d \eta \\
&\leq c_K 2^{j(2\alpha - K)} d^{-(K+1)} \int_{|\eta |> 2^{j-5}}  |\eta |^{-4}\, \d \eta \\
&= c_K 2^{2\alpha j } (d \,2^j )^{-(K+1)} ,
\end{split}
\]
as required, where we used \eqref{bound_sup_on_derivs_of_phi_in_FS} in the third inequality.\vspace{0.4cm}\\
\emph{Step~2.2c} We show that $\| [\phi , (-\Delta )^\alpha ] P_j (1-\phi_{3Q/2} )u  \| \leq e_d (j)$. (This implies \eqref{to_show_1.2c}.)\\

Indeed, letting (for brevity) $w\coloneqq (1- \phi_{3Q/2} ) u$ and $q_j (\xi ) \coloneqq | \xi |^{2\alpha } p_j (\xi )$ we can write 
\[
\phi (-\Delta )^\alpha P_j w (x) = \phi (x) \int_{\{ |x-y | \geq d/3 \} } \check{q_j} (x-y )w(y) \, \d y,
\]
as in \eqref{moving_bumps_temp}. Thus, since $\| \check{q_j} \|_{L^1 (B(d/3)^c) } \leq e_d (j)  $ (as in \eqref{Lq_norm_of_pj_outside_delta_ball}) we can use Young's inequality for convolutions to obtain
\eqnb\label{1.2c_temp}
\| \phi (-\Delta )^\alpha P_j w  \|  \leq \| \check{q_j} \|_{L^1 (B(d/3)^c) } \| w \|  \leq e_d(j).
\eqne
On the other hand 
\[
\begin{split}
\left\|  (-\Delta )^\alpha \left( \phi P_j w \right) \right\|& \leq  \left\|  (-\Delta )^\alpha \widetilde{P_j} \left( \phi P_j w \right) \right\| + \left\|  (-\Delta )^\alpha (1-\widetilde{P_j} )\left( \phi P_j w \right) \right\| \\
&\leq c 2^{2\alpha j} \| \phi P_j w \| +  e_d (j) \\
&\leq e_d (j),
\end{split}
\]
where we used \eqref{1.1_temp} (applied with $w$ instead of $u$) in the second line and Lemma \ref{lem_moving_bump_functions_across_LP} in the last line. This and \eqref{1.2c_temp} prove the claim. \vspace{0.4cm}\\
\emph{Step~3.} We show that $J \leq c \,u_{Q,j} \left( G_{low,loc} + G_{loc} + G_{hh}  \right) + e_{vl} + \sum_{k\geq \theta j}e_d(k)$.\\(This together with Step~2 finishes the proof.)

We can rewrite $J $ in the form 
\[\begin{split}
J &= -\int \phi_Q P_j T[(u\cdot \nabla )u] \cdot (\phi_Q P_ju) \\
& = - \sum_{i,l,m}\int \phi_Q  T_{m i} P_j  (u_l \,\p_l u_m ) \phi_Q P_j u_i
\end{split}\]
where we used the fact that ``$T_{m i}$'' and  ``$P_j$'' are multipliers (so that they commute). (Recall that $\widehat{T_{m i}}(\xi) = (\delta_{mi} - \xi_m \xi_i |\xi |^{-2} )$, see \eqref{def_of_T}.) We now apply the paraproduct formula \eqref{bony} to $P_j (u_l\p_l u_m)$ to write
\[
J=  J_{loc,low} +J_{low,loc} +J_{loc} + J_{hh}  ,
\]
where each of $J_{loc,low}$, $J_{low,loc}$, $J_{loc}$, $J_{hh}$ equals $J$ except for the term $u_l \p_l u_m$, which is replaced by the corresponding combination of the modes of $u_l$ and $\p_l u_m$, as in the paraproduct formula (see \eqref{calculating_Jhh} and \eqref{bounding_lowloc} below). We estimate $J_{hh}$ in Step~3.1 below and $J_{loc,low}$, $J_{low,loc}$, $J_{loc}$ in Step~3.2.\vspace{0.4cm}\\
\emph{Step~3.1} We show that $J_{hh}\leq c \,u_{Q,j} G_{hh}+ \sum_{k\geq j} e_d (k)$.\\

We write 
\eqnb\label{calculating_Jhh}\begin{split}
J_{hh} &= -\sum_{i,l,m} \int \phi_Q  T_{m i} P_j \left( \sum_{k\geq j+3 } P_k u_l \widetilde{P}_{k}\p_l u_m \right) \phi_Q P_j u_i\\
& \leq \| \phi_Q P_j u \|_\infty   \sum_{i,l,m} \left\|  \phi_Q  T_{m i} P_j \sum_{k\geq j+3} \left(  P_k u_l \widetilde{P}_{k}\p_l u_m \right) \right\|_1 \\
& \leq c\, 2^{3j/2} u_{Q,j} \sum_{i,l,m} \left\|  \phi_Q  T_{m i} P_j \sum_{k\geq j+3} \left(  P_k u_l \widetilde{P}_{k}\p_l u_m \right) \right\|_1  + e_d (j) \\
& \leq c\, 2^{3j/2} u_{Q,j} \sum_{i,l,m} \left\|  \phi_Q  T_{m i} P_j \phi_{3Q/2}^3 \left( \sum_{k\geq j+3} P_k  u_l \widetilde{P}_{k} \p_l u_m  \right) \right\|_1  + e_d (j) \\
&\leq c\, 2^{3j/2} u_{Q,j} \sum_{k\geq j+3} \| \phi_{3Q/2} P_k  u \| \| \phi_{3Q/2}^2 \widetilde{P}_k \nabla u \|   + e_d (j),
\end{split}
\eqne
where, in the fourth line we applied Corollary \ref{cor_moving_bump_functions_across_LP+any_homog_mult} with $f\coloneqq \sum_{k\geq j+3} P_k u_l \widetilde{P}_k u_m $ and noted that $\supp \,\phi \subset 7Q/3$ is separated from $\supp \, (1-\phi_{3Q/2}^3 )$ by at least $d/3$. As for the  third line, we used the fact that $P_j u = P_j \widetilde{P}_j u$, \eqref{localised_bernstein_ineq_single} and \eqref{bernstein_ineq_single} to write 
$\| \phi_Q P_j u \|_\infty  \leq c 2^{3j/2 } u_{Q,j}+ e_d (j) \| \widetilde{P}_j u \|_\infty\leq c 2^{3j/2 } u_{Q,j}+ e_d (j)  $, as well as noted that $e_d (j)$ multiplied by the (long) $L^1$ norm still gives $e_d(j)$, since we can brutally estimate this norm,
\[\begin{split}
\left\|  \phi_Q  T_{m i} P_j \sum_{k\geq j+3} \left(  P_k u_l \widetilde{P}_{k}\p_l u_m \right) \right\|_1 &\leq \| \phi_Q \|\,  \left\| P_j   \p_l T_{m i}  \sum_{k\geq j+3} \left(  P_k u_l \widetilde{P}_{k} u_m \right) \right\| \\
&\leq c\, d^{3/2} 2^j \left\| P_j   T_{m i}  \sum_{k\geq j+3} \left(  P_k u_l \widetilde{P}_{k} u_m \right) \right\| \\
&\leq c\, d^{3/2} 2^{5j/2} \left\| P_j     \sum_{k\geq j+3} \left(  P_k u_l \widetilde{P}_{k} u_m \right) \right\|_1 \\
&\leq c\, d^{3/2} 2^{5j/2} \sum_{k\geq j+3} \left\|     P_k u_l \widetilde{P}_{k} u_m  \right\|_1 \\
&\leq c\, d^{3/2} 2^{5j/2} \sum_{k\geq j+1} \left\|     P_k u \right\|^2 \\
&\leq c\, d^{3/2} 2^{5j/2} \left\|      u \right\|^2 \\
&\leq c\, d^{3/2} 2^{5j/2} 
\end{split}
\]
for each $i,l,m$, where we used the Cauchy-Schwarz inequality in the first line, boundedness (in $L^2$) of the Leray projection (i.e. the fact that $|\widehat{T_{mi}}(\xi )| \leq 1$) and the Bernstein inequality \eqref{bernstein_ineq_single} in the third line, \eqref{lp_pj_is_bdd_on_Lp} in the fourth line and the Cauchy-Schwarz inequality (twice) in the fifth line.

Noting that
\[\begin{split} 
\| \phi_{3Q/2}^2  \widetilde{P}_k \nabla u \|  & = \|{P}_{k\pm 2} ( \phi_{3Q/2}^2 \nabla \widetilde{P}_k u ) \| + e_d(k)  \\
& \leq  \|{P}_{k\pm 2} \nabla ( \phi_{3Q/2}^2  \widetilde{P}_k u ) \|  + 2 \| {P}_{k\pm 2} ( \nabla \phi_{3Q/2} \, \phi_{3Q/2}  \widetilde{P}_k u )  \| + e_d(k) \\
&\leq c \, 2^k \| \phi_{3Q/2}^2 \widetilde{P}_k u \| + c \, d^{-1} u_{3Q/2,k\pm 2 } + e_d(k)\\
&\leq c \, 2^k  u_{3Q/2,k\pm 2 } + e_d(k),
\end{split} \]
where we used Lemma \ref{lem_moving_lp_across_bumps} in the first inequality, the fact that $\| \widetilde{P}_k \|\leq 1$ and \eqref{what_is_phi_Q} in the third inequality, and the assumption $d  > 2^{-j } > 2^{-k}$ in the last inequality, we obtain 
\eqnb\label{estimate_on_Jhh}
J_{hh} \leq c\,2^{3j/2} u_{Q,j} \sum_{k\geq j+1} 2^k u_{3Q/2,k}^2    + \sum_{k\geq j} e_d (k) ,
\eqne
as required, where we also applied the Cauchy-Schwarz inequality in the first sum.\vspace{0.4cm}\\
\emph{Step~3.2} We show that $J_{loc,low}+J_{low,loc}+J_{loc} \leq c\,u_{Q,j} \left( G_{low,loc}+G_{loc} \right) + e_{vl} + \sum_{k\geq \theta j } e_d (k)$. (This completes the proof of Step~3.)\\

We set 
\[
U_{lm} \coloneqq \widetilde{P}_j u_l \sum_{k\leq j-5} P_k u_m +  \widetilde{P}_j u_m \sum_{k\leq j-5} P_k u_l +  \left( \sum_{k=j-4}^{j+2} P_k u_l  \right) \left( \sum_{k=j-4}^{j+4} P_k u_m  \right)
\]
to write
\eqnb\label{bounding_lowloc}
\begin{split}
J_{loc,low}+J_{low,loc}+J_{loc} &= - \sum_{i,l,m} \int \phi_Q T_{mi} P_j \p_l U_{ml} \phi_Q P_j u_i\\
&\leq u_{Q,j}\sum_{i,l,m}  \|   \phi_Q T_{mi} P_j \p_l U_{ml} \|\\
&= u_{Q,j}\sum_{i,l,m} \|   \phi_Q T_{mi} P_j ( \phi_{3Q/2}^3 \p_l U_{ml} )\| + e_d (j) \\
&\leq c\, u_{Q,j} \sum_{l,m} \|  P_j ( \phi_{3Q/2}^3 \p_l U_{ml} ) \| + e_d (j) \\
&\leq c\, u_{Q,j} \sum_{l,m} \left( \|  P_j \p_l ( \phi_{3Q/2}^3  U_{ml} ) \| +  3\|  P_j ( \phi_{3Q/2}^2 \p_l  \phi_{3Q/2}  U_{ml} ) \| \right) + e_d (j) \\
&\leq c\,  2^j u_{Q,j} \sum_{l,m} \| \phi_{3Q/2}^2 U_{ml} \| + e_d (j),
\end{split}
\eqne
where we applied Corollary \ref{cor_moving_bump_functions_across_LP+any_homog_mult} (with $q\coloneqq 2$ and $f\coloneqq U_{ml}$) in the third line, as well as \eqref{what_is_phi_Q} (as in the previous calculation) and the assumption $d>2^{-j}$ in the last line.

We note that for each $m,l$
\eqnb\label{bound_phi_Uml}
 \| \phi_{3Q/2}^2 U_{ml} \| \leq 2 u_{3Q/2,j\pm 2} \left\| \phi_{3Q/2} \sum_{k\leq j-5 }   P_k u \right\|_\infty + \| \phi_{3Q/2} P_{j\pm 4 } u \|_\infty u_{3Q/2,j\pm 4} .
 \eqne
 Since we can estimate the above $L^\infty $ norm including the summation by writing $\sum_{k\leq j-5} = \sum_{k< \theta j } + \sum_{ \theta j\leq k \leq j-5}$,
\eqnb\label{temp1_low_vs_verylow}
\begin{split}
\left\| \phi_{3Q/2} \sum_{k\leq j-5 }   P_k u \right\|_\infty &\leq \left\| \phi_{3Q/2} \sum_{k< \theta  j }   P_k u \right\|_\infty + \sum_{ \theta j\leq k \leq j-5 } \left\| \phi_{\max(Q_k,3Q/2)}   P_k u \right\|_\infty \\
& \leq \| P_{\leq \theta j} u \|_\infty  + c\sum_{ \theta j\leq k \leq j-5 } 2^{3k/2} u_{\max(Q_k,3Q/2),k}  + \sum_{k\geq  \theta j } e_{d} (k)\\
&\leq c \,2^{3\theta j/2} + c\sum_{ \theta j\leq k \leq j-5 } 2^{3k/2} u_{\max(Q_k,3Q/2),k}  +  \sum_{k\geq  \theta j } e_{d} (k),
\end{split}\eqne
where we used the localised Bernstein inequality \eqref{localised_bernstein_ineq_single} in the second line (note that taking $\max(Q_k, 3Q/2 )$ is necessary since only then we can guarantee that the sidelength of such cube is greater than $ 2^{-k}$, as required by \eqref{localised_bernstein_ineq_single}) and the Bernstein inequality \eqref{bernstein_ineq_leq} in the last line, we can plug it in \eqref{bound_phi_Uml} to get
\[
 \| \phi_{3Q/2}^2 U_{ml} \| \leq c \, u_{3Q/2,j\pm 2 } 2^{3\theta j/2 } + c\, u_{3Q/2,j\pm 2 } \sum_{ \theta j\leq k \leq j-5 } 2^{3k/2} u_{3Q/2,k} + c\, 2^{3j/2 } u_{3Q/2,j\pm 4}^2 + \sum_{k\geq  \theta j } e_{d} (k),
\]
where we used the assumption $d>2^{-j+4}$ to apply the localised Bernstein inequality\eqref{localised_bernstein_ineq_single} again.
Inserting this into \eqref{bounding_lowloc} and using the fact that  $3\theta /2  = 2\alpha -1 -\varepsilon $ we obtain
\eqnb\label{temp2_low_vs_verylow}
\begin{split}
J_{loc,low}+J_{low,loc}+J_{loc} &\leq c\, 2^{2\alpha j} 2^{-\varepsilon j}u_{3Q/2,j\pm 2}^2 + c\,2^j u_{Q,j} u_{3Q/2,j\pm 2} \sum_{ \theta j\leq k \leq j-5 } 2^{3k/2} u_{3Q/2,k} \\
&+ c\, 2^{5j/2 } u_{Q,j} u_{3Q/2,j\pm 4}^2 + 
\sum_{k\geq  \theta j } e_{d} (k),
\end{split}
\eqne
as required (note the first term on the right-hand side the is the ``very low modes error'', $e_{vl}$).
\end{proof}
We now constraint ourselves to $j$-cubes. Given a $j$-cube $Q$ we will write
\[
u_Q \coloneqq u_{Q,j}
\]
for brevity. The above proposition then reduces to the following.
\begin{corollary}\label{cor_basic_est_for_jcubes}
Let $u$ be a Leray-Hopf weak solution of the Navier--Stokes equations \eqref{NSE_intro} on time interval $[0,\infty )$. Let $Q$ be a $j$-cube with $j$ large enough so that $2^{\varepsilon j } \geq 16$. Then
\eqnb\label{basic_estimate_jcubes}
\begin{split}
\frac{\d }{\d t} u_{Q}^2  &\leq -c\, 2^{2\alpha j} u_{Q}^2+c \,u_{Q} \left(  u_{3Q/2,j\pm 2} \sum_{\theta  j \leq k\leq j-5 } 2^{j+3k/2} u_{Q_k}  + 2^{5j/2}  u_{3Q/2,j\pm 4}^2 \right. \\
&\hspace{2cm}\left.+  \sum_{k\geq j+1} 2^{3j/2+k} u_{3Q/2,k}^2  \right) + c\, 2^{j(2\alpha -\varepsilon )} u_{3Q/2,j\pm 2}^2 + e(j)  
\end{split}
\eqne
\end{corollary}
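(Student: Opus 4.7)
The plan is to obtain the corollary as a direct specialization of Proposition~\ref{prop_basic_estimate} to $j$-cubes, so the work is almost entirely bookkeeping. A $j$-cube has sidelength $d = 2^{-j(1-\varepsilon)}$, hence $d \cdot 2^j = 2^{\varepsilon j}$, and the requirement $d > 16\cdot 2^{-j}$ of the proposition translates exactly into the standing hypothesis $2^{\varepsilon j} \geq 16$. I then substitute $d = 2^{-j(1-\varepsilon)}$ into \eqref{basic_est}; the dissipation term and the nonlinear trinomial $G_{low,loc} + G_{loc} + G_{hh}$ take the claimed form once the prefactors $2^j$, $2^{5j/2}$, $2^{3j/2}$ are distributed inside the respective $k$-sums.

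In $G_{low,loc}$ the cube $\max(Q_k, 3Q/2)$ equals $Q_k$ throughout the summation range $\theta j \leq k \leq j-5$, because $k \leq j-5$ together with small $\varepsilon$ gives $2^{(j-k)(1-\varepsilon)} \geq 2^{5(1-\varepsilon)} > 3/2$; hence under the shorthand $u_{Q_k} = u_{Q_k,k}$ the low--local term reads as stated. The two error terms combine neatly: since $(d\, 2^j)^{-1} = 2^{-\varepsilon j}$, the dissipation error becomes $e_{diss} = c\, 2^{j(2\alpha - \varepsilon)}\, u_{3Q/2, j\pm 2}^2$, which agrees up to constants with $e_{vl}$, and adding them yields the single term $c\, 2^{j(2\alpha - \varepsilon)}\, u_{3Q/2, j\pm 2}^2$ in \eqref{basic_estimate_jcubes}.

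The only point deserving a brief check is the collapse $\sum_{k \geq \theta j} e_d(k) = e(j)$. Each $e_d(k)$ produced in the proof of the proposition arose either from Lemma~\ref{lem_moving_lp_across_bumps} or from the localized Bernstein inequality \eqref{localised_bernstein_ineq_single} applied to a cube of sidelength at least $2^{-k(1-\varepsilon)}$; thus the effective $d$ satisfies $d\, 2^k \geq 2^{\varepsilon k}$, so that $e_d(k) \leq c_K\, 2^{ck}\bigl(2^{\varepsilon k}\bigr)^{c-K}$. For $K$ sufficiently large, the resulting geometric tail over $k \geq \theta j$ is bounded by $c_{K'} 2^{-K' \theta j}$ for arbitrarily large $K'$, and since $\theta = 2(2\alpha - 1 - \varepsilon)/3$ is strictly positive for $\alpha > 1$ and $\varepsilon$ in the admitted range \eqref{how_small_is_eps}, this gives a $j$-negligible error $e(j)$. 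I expect no genuine obstacle: the content of the corollary is a convenient repackaging of the main estimate, with the dependence on $d$ absorbed into $\varepsilon$-powers of $2^j$.
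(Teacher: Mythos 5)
Your proposal is correct and matches the paper's (very short) proof of Corollary~\ref{cor_basic_est_for_jcubes}: both simply substitute $d = 2^{-j(1-\varepsilon)}$ into Proposition~\ref{prop_basic_estimate}, note that $e_{diss}$ then coincides with $e_{vl}$, and collapse $\sum_{k\geq \theta j} e_d(k)$ to $e(j)$. You are slightly more explicit than the paper on two small points that it glosses over: the observation that $\max(Q_k, 3Q/2) = Q_k$ throughout the range $\theta j \le k \le j-5$ (which holds because $2^{5(1-\varepsilon)} > 3/2$ given \eqref{how_small_is_eps}), and the observation that the summability of $\sum_{k\geq \theta j} e_d(k)$ really relies on the effective cube size in each $e_d(k)$ being at least $2^{-k(1-\varepsilon)}$ (so that $e_d(k) \le c_K 2^{ck} 2^{\varepsilon k(c-K)}$) — taking $d$ literally to be the sidelength of $Q$ would make $d\,2^k < 1$ for $k$ near $\theta j$ and the bound would diverge as $K\to\infty$. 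That extra care is well placed and correct.
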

\begin{proof}
We apply the estimate from Proposition \ref{prop_basic_estimate} (which is valid due to the assumption $2^{\varepsilon j} >16$). Since $e_{diss}\leq c\,2^{j(2\alpha -\varepsilon )}  u_{3Q/2,j\pm 2}^2$ and $\sum_{k\geq \theta j} e_d (k) \leq c_K \sum_{k\geq \theta j} 2^{ck} 2^{\varepsilon k (c-K)} \leq c_K 2^{c\theta j+\varepsilon \theta j (c-K)} = e(j)$, where $K$ is taken large enough (to guarantee the summability of the geometric series), we arrive at \eqref{basic_estimate_jcubes}, as required.
\end{proof}
\subsection{Good cubes and bad cubes}
We now fix $u_0\in H^1 (\RR^3)$ and a Leray-Hopf weak solution with initial data $u_0$. We say that a cube $Q$ is $j$-good if
\eqnb\label{j-good}
\int_0^\infty \int_Q \sum_{k\geq j } 2^{2\alpha k} | P_k u |^2   \leq  2^{-j(5-4\alpha +\varepsilon )}
\eqne
We say that a $j$-cube is good if it is $j$-good. Otherwise we say that it is bad. 
\subsection{Critical regularity on cubes with some good ancestors}

We show that, for sufficiently large $j$, goodness of a $j$-cube and some of its ancestors guarantees critical regularity ($+ \varepsilon $) of $u_Q$ on a smaller cube $Q$.

\begin{theorem}\label{thm_j_good_gives_critical regularity}
There exists $j_0>0$ (sufficiently large) such that whenever $Q$ is a $j$-cube with $j\geq j_0$ and such that each of $Q_{k-10}$, $k\in [ \theta j , j ]$, is good then
\[
u_{Q} (t) <  2^{-\frac{j}{2} (5- 4\alpha +\varepsilon )}\qquad \text{ for } t\in [0,T) .
\]
\end{theorem}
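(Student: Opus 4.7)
The proof proceeds by a bootstrap (continuity) argument applied to the integral form of the main estimate \eqref{basic_estimate_jcubes}. Set $N := 2^{-j(5-4\alpha+\varepsilon)}$ so the claim reads $u_Q(t)^2 < N$ on $[0, T)$. Because $u_0 \in H^1$, we have $u_Q(0)^2 \leq \|P_j u_0\|^2 \leq c\, 2^{-2j}$; since \eqref{how_small_is_eps} forces $5 - 4\alpha + \varepsilon < 2$, one may choose $j_0$ large so that $u_Q(0)^2 \leq N/10$. Define
\[
T_* := \sup\{\, t \in [0, T) : u_Q(s)^2 < N \text{ for all } s \in [0, t] \,\},
\]
and argue by contradiction that $T_* < T$. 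The continuity of $u_Q$ asserted in Proposition~\ref{prop_basic_estimate} --- which is the key feature that lets the argument extend from strong to Leray--Hopf weak solutions --- then forces $u_Q(T_*)^2 = N$.

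Integrating \eqref{basic_estimate_jcubes} on $[0, T_*]$ and discarding the (non-positive) dissipation $-c\, 2^{2\alpha j} u_Q^2$ produces
\[
u_Q(T_*)^2 \leq u_Q(0)^2 + c\int_0^{T_*}\! u_Q \bigl( G_{low,loc} + G_{loc} + G_{hh} \bigr)\,dt + c\, 2^{j(2\alpha-\varepsilon)} \!\!\int_0^{T_*} u_{3Q/2, j\pm 2}^2\,dt + T\, e(j).
\]
The term $T\,e(j) \leq T c_K 2^{-Kj}$ is absorbed into $N/10$ by choosing $K$ large. For the $e_{diss}$ integral, the inclusion $3Q/2 \subset Q_{j-10}$ together with goodness \eqref{j-good} of $Q_{j-10}$ (available since $j \in [\theta j, j]$) gives $\int_0^\infty u_{3Q/2, m}^2 \leq c\, 2^{-2\alpha m} N$ for $m \in \{j-2,\ldots,j+2\}$, so this piece contributes at most $c\, 2^{-\varepsilon j} N$.

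The three nonlinear integrals are all handled by the same device: invoke the bootstrap bound $\sup_{[0, T_*]} u_Q \leq N^{1/2}$ to absorb one factor of $u_Q$ pointwise, then bound the remaining factors by $L^2_t$ estimates supplied by goodness. Goodness of $Q_{j-10}$ yields $\|u_{3Q/2, k}\|_{L^2_t}^2 \leq c\, 2^{-2\alpha k} N$ for any $k \geq j-10$, while goodness of $Q_{k-10}$ for $k \in [\theta j, j-5]$ yields $\|u_{Q_k}\|_{L^2_t}^2 \leq c\, 2^{-2\alpha k - k(5-4\alpha+\varepsilon)}$. A Cauchy--Schwarz estimate shows $G_{loc}$ and $G_{hh}$ each contribute at most $c\, 2^{-\varepsilon j/2} N$, the $G_{hh}$ bound relying on convergence of $\sum_{k \geq j+1} 2^{k(1-2\alpha)} \sim 2^{j(1-2\alpha)}$, which holds precisely because $\alpha > 1$. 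For $G_{low,loc}$ one applies a three-fold H\"older estimate $(L^\infty, L^2, L^2)$ to the product $u_Q\, u_{3Q/2, j\pm 2}\, u_{Q_k}$ inside the sum over $k \in [\theta j, j-5]$; the resulting $k$-sum has exponent $\alpha - 1 - \varepsilon/2 > 0$, so it is dominated by $k \approx j$ and again yields $c\, 2^{-\varepsilon j/2} N$.

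Summing all contributions gives $u_Q(T_*)^2 \leq N/10 + N/10 + O(2^{-\varepsilon j/2}) N < N$ for $j_0$ large enough, contradicting $u_Q(T_*)^2 = N$. The main obstacle is the $G_{low,loc}$ term: it encodes the transfer of energy from low modes into the local mode at the spatial location $Q$, and its estimate relies simultaneously on the assumption that \emph{every} ancestor $Q_{k-10}$ with $k \in [\theta j, j]$ is good (supplying $L^2_t$ control of $u_{Q_k}$ across the full low-frequency range) and on the strict positivity of $\alpha - 1 - \varepsilon/2$, which both makes the $k$-sum converge and supplies the $2^{-\varepsilon j/2}$ cushion closing the bootstrap.
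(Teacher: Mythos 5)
Your proposal is structurally the same as the paper's: a first-failure contradiction argument closing a bootstrap by feeding the goodness-based $L^2_t$ control into the integrated main estimate, with the same role for $\alpha>1$ in making both the low-mode sum (exponent $\alpha-1-\varepsilon/2$) and high-mode tail converge. Your term-by-term Cauchy--Schwarz on the $k$-sum is a slightly cleaner bookkeeping than the paper's (which pays a factor of $j\leq c\,2^{j\varepsilon/4}$ by squaring the whole sum at once), and the rest of the nonlinear estimates match.

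There is, however, a genuine gap. You integrate the main estimate over $[0,T_*]$ and write the integrated $e(j)$ error as ``$T\,e(j)$, absorbed into $N/10$ by choosing $K$ large''. But $e(j)$ is a pointwise-in-time bound ($|e(j)|\leq c_K 2^{-Kj}$), so the integrated error is $T_*\cdot e(j)$, and $T_*$ is \emph{not} a priori bounded for a Leray--Hopf weak solution on $[0,\infty)$: the failure time depends on $Q$ and $j$ and could grow without bound along the sequence of cubes and scales you must handle with a single fixed $j_0$. Discarding the dissipation term $-c\,2^{2\alpha j}u_Q^2$ removes the only mechanism that could damp the accumulated error.

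The paper avoids this by introducing $t_1$, the last time before $t_0$ with $u_Q(t_1)=\tfrac12\,2^{-\frac{j}{2}(5-4\alpha+\varepsilon)}$, and integrating only over $[t_1,t_0]$. The resulting \emph{lower} bound $u_Q\geq\tfrac12 N^{1/2}$ on $(t_1,t_0)$, combined with $\phi_{3Q/2}\geq\phi_Q$ and the goodness estimate $\int_{t_1}^{t_0} u_{3Q/2,j\pm2}^2\leq c\,2^{-j(5-2\alpha+\varepsilon)}$, forces
\[
(t_0-t_1)\,\tfrac{N}{4}\leq\int_{t_1}^{t_0} u_{3Q/2,j\pm2}^2\leq c\,2^{-j(5-2\alpha+\varepsilon)},
\quad\text{so}\quad t_0-t_1\leq c\,2^{-2\alpha j},
\]
which makes $(t_0-t_1)\,e(j)$ again $j$-negligible. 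You should either introduce this $t_1$ device, or retain the dissipation and run a Gronwall-type argument $\int_0^{T_*}e^{-c2^{2\alpha j}(T_*-s)}e(j)\,\d s\leq c\,2^{-2\alpha j}e(j)$ to control the accumulated error; as written, the proposal does neither.
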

\begin{remark}\label{rem2}
The above theorem appears in an imprecise form as Theorem 7.1 in \cite{katz_pavlovic}\footnote{The claim following ``we must have'' on p. 374 does not follow, as the assumption of the proof by contradiction is only on $Q$, rather than on every cube in its nuclear family.}.  This is related to the somewhat unexpected way in which the dissipation error is handled by \cite{katz_pavlovic} in Lemma 6.3. This lemma is in fact not needed, and it seems necessary to incorporate the dissipation error directly into the main estimate (in order to get around the imprecision), as in $e_{diss}$ in \eqref{basic_est}.\\
Moreover the statement of Theorem 7.1 in \cite{katz_pavlovic} suggests that goodness of only one cube is sufficient for the critical decay, which is not consistent with its proof (which uses goodness of the ancestors in the third line on p. 375).
\end{remark}

\begin{proof}
Note that the claim is true for sufficiently small $t>0$ since $u_0\in H^1$ (so that $\| P_j u_0 \|^2 = \int p_j^2 (\xi )|\widehat{u_0} (\xi )|^2 \d \xi \leq c\, 2^{-2j} \int |\xi |^2 |\widehat{u_0} (\xi )|^2 \d \xi \leq c\,2^{-2j} \| u_0 \|_{H^1}^2 < 2^{-j(5-4\alpha +\varepsilon )}$ for sufficiently large $j$) and $u(t)$ remains bounded in $H^1$ for small $t>0$. Suppose that the theorem is false, and let $t_0$ be the first time when it fails and $Q$ a $j$-cube for which it fails. Then
\eqnb\label{uQj_is_crit+eps_until_t0}
u_{Q} (t) \leq   2^{-\frac{j}{2} (5- 4\alpha +\varepsilon )}\qquad \text{ for } t\leq t_0
\eqne
with equality for $t=t_0$.
Let $t_1\in (0,t_0)$ be the last time when $
u_{Q} (t_1) \leq \frac{1}{2}\,  2^{-\frac{j}{2} (5- 4\alpha +\varepsilon )}$, so that
\eqnb\label{temp_uQ_between_t1_t0}
\frac12 2^{-\frac{j}{2} (5- 4\alpha +\varepsilon )}\leq u_{Q} (t) \leq  2^{-\frac{j}{2} (5- 4\alpha +\varepsilon )}\qquad \text{ for }t\in (t_1,t_0).
\eqne
Note that, since $\mathrm{supp}\,\phi_{3Q/2} \subset 7Q/4 \subset Q_{j-1} \subset Q_{j-10}$ and $Q_{j-10}$ is good,
\[
\int_{t_1}^{t_0} \sum_{k \geq j-10 } 2^{2\alpha k} u_{3Q/2,k}^2  \leq c \int_{t_1}^{t_0} \int_{Q_{j-10}} \sum_{k \geq j-10 } 2^{2\alpha k } | P_k u |^2 \leq c\, 2^{-j (5-4\alpha + \varepsilon )},
\]
and so in particular (recalling that $\alpha \in (1, 5/4)$)
\eqnb\label{smart1}
\int_{t_1}^{t_0} u_{3Q/2,j\pm 4}^2 \leq c\, 2^{-j(5-2\alpha +\varepsilon )}
\eqne
and
\eqnb\label{smart2}
\int_{t_1}^{t_0} \sum_{k\geq j+1 } 2^{ k} u_{3Q/2,k}^2   \leq  c 2^{j (1-2\alpha  )}\int_{t_1}^{t_0} \sum_{k\geq j } 2^{2\alpha k} u_{3Q/2,k}^2  \leq  c 2^{-j (4-2\alpha + \varepsilon )}.
\eqne
Moreover, since $Q_{k-10}$ is good for every $k\in [ \theta j , j]$ we also have $\int_{t_1}^{t_0}  u_{Q_k}^2 \leq c 2^{-k (5-2\alpha +\varepsilon )}$ (as in \eqref{smart1}) and so
\eqnb\label{smart3}
\int_{t_1}^{t_0}  \sum_{\theta j \leq k\leq j-5 } 2^{3 k} u_{Q_k}^2  \leq c   \sum_{\theta j \leq k\leq j-5 }  2^{-k (2-2\alpha +\varepsilon )}  \leq  c 2^{-j (2-2\alpha + \varepsilon )},
\eqne
where we used the fact that $\alpha >1 $ and the fact that $\varepsilon >0$ is small (recall \eqref{how_small_is_eps}).\footnote{The restriction $\alpha >1$ is used here, but $\alpha \geq 1$ would be sufficient by noting that $\sum_{k\geq \theta j}2^{-k\varepsilon } \leq c\, 2^{-j\theta \varepsilon }$. Indeed, since $\theta >5/8$ (recall \eqref{def_of_theta}), the last inequality of this proof would then become $1\leq c\,2^{-j\varepsilon (\theta -1/2-1/8)}$, which still gives contradiction for large $j$.}

Applying the main estimate \eqref{basic_estimate_jcubes} between $t_1$ and $t_0$ (and ignoring the first term on the right-hand side) and then utilizing \eqref{smart1}-\eqref{smart3} we obtain
\[\begin{split}
 2^{-j (5- 4\alpha +\varepsilon )} &= \frac{4}{3} \left( u_{Q}(t_0)^2 - u_{Q}(t_1)^2 \right) \\
&\leq  c \int_{t_1}^{t_0} u_{Q} \left( 2^j u_{3Q/2,j\pm 2} \sum_{\theta j \leq k\leq j-5 } 2^{3k/2} u_{3Q/2,k}  + 2^{5j/2}  u_{3Q/2,j\pm 4}^2 \right. \\
&\hspace{2cm}\left.+ 2^{3j/2} \sum_{k\geq j+1} 2^{k} u_{3Q/2,k}^2  \right) + c\, 2^{j(2\alpha -\varepsilon )} \int_{t_1}^{t_0} u_{3Q/2,j\pm 2}^2 + e(j) \\
&\leq c\, 2^{-\frac{j}{2}(5-4\alpha +\varepsilon ) } \left( 2^j 2^{-\frac{j}{2} (5-2\alpha + \varepsilon )} 2^{-\frac{j}{2} (2-2\alpha + \varepsilon/2 )} + 2^{5j/2} 2^{-j (5-2\alpha +\varepsilon )} \right.\\
&\hspace{2cm}\left.+2^{3j/2}2^{-j(4-2\alpha + \varepsilon )} \right) + c\, 2^{j(2\alpha -\varepsilon )} 2^{-j(5-2\alpha +\varepsilon )}\\
&\leq c\, 2^{-j (5-4\alpha +\varepsilon )} \left( 2^{-3j\varepsilon/8 } + 2^{-j\varepsilon /2 } +2^{-j\varepsilon /2 } + 2^{-3j\varepsilon/2 } \right) \\
&\leq c\, 2^{-j (5-4\alpha +\varepsilon )}  2^{-3j\varepsilon /8 }
\end{split}
\]
where, in the second inequality, we also used the Cauchy-Schwarz inequality and used the inequality $j \leq c \,2^{j\varepsilon /4}$, as well as absorbed $e(j)$ (by writing, for example, $e(j)\leq c \,2^{-j(5-4\alpha +2\varepsilon )}$ (recall the beginning of Section \ref{sec_prelims} for the definition of the $j$-negligible error $e(j)$)). Thus
\[
1 \leq c\, 2^{-j\varepsilon /4 },
\]
which gives a contradiction for sufficiently large $j$.
\end{proof}
\subsection{The singular set}\label{sec_sing_set}

Having defined good cubes and bad cubes, and observing that we have a ``slightly more than critical'' estimate on a cube that has some good ancestors  (Theorem \ref{thm_j_good_gives_critical regularity}), we now characterize the singular set $S$ in terms of its covers by bad cubes, and (in the next section) we show a much stronger (than critical) estimate regularity outside $S$.

Let $A_j$ denote the union of all bad $j$-cubes. Using Vitali Covering Lemma we can find a cover $\mathcal{A}_j$ that covers $A_j$ and such that
\eqnb\label{card_of_Aj}
\# \mathcal{A}_j \leq c \, 2^{j(5-4\alpha + \varepsilon )}.
\eqne
Indeed, the Vitali Covering Lemma gives a sequence of pairwise disjoint bad $j$-cubes $Q^{(l)}$ such that 
\[
A_j \subset \bigcup_l 5Q^{(l)}
\]
However, since $\int_0^\infty \int |(-\Delta )^{\alpha /2} u |^2 \leq c$ (from the energy inequality, recall \eqref{EI_prelims}), 
\eqnb\label{calc_no_of_Aj}\begin{split}
c&\geq \int_0^\infty \int | \xi |^{2\alpha} \left| \widehat{u} (\xi ) \right|^2\\
&= \sum_{k\in \ZZ}  \int_0^\infty \int   p_k (\xi ) | \xi |^{2\alpha} \left| \widehat{u} (\xi ) \right|^2 \\
&\geq c\sum_{k\geq j} 2^{2\alpha k} \int_0^\infty \int  p_k (\xi )^2  \left| \widehat{u} (\xi ) \right|^2 \\
&=c \int_0^\infty \int \sum_{k\geq j} 2^{2\alpha k} | P_k u |^2  \\
&\geq c\sum_l \int_0^\infty \int_{Q^{(l)}} \sum_{k\geq j} 2^{2\alpha k} | P_k u |^2 \\
&\geq c\sum_l 2^{-j(5-4\alpha +\varepsilon )} ,
\end{split}
\eqne
where we used the Plancherel identity (twice, in the first and fourth lines), Tonelli's theorem (twice, in the second and fourth lines), the fact that $Q^{(l)}$'s are pairwise disjoint in the fifth line. Thus
\[
l\leq c2^{j(5-4\alpha +\varepsilon )},
\]
and so $\mathcal{A}_j$ can be obtained by covering each of $5Q^{(l)}$ by at most $6^3$ $j$-cubes.

In the remainder of this section we will show that there exists a (larger) $j$-cover ${\mathcal{B}}_j$ of all bad $j$-cubes (i.e. of $A_j$) with the same cardinality (i.e. satisfying \eqref{card_of_Aj}, but with a larger constant) and the additional property that
\eqnb\label{barrier_property}
\begin{split}
\text{for any }x\text{ outside of }{\mathcal{B}}_j& \text{ there exists }r\in (0,2^{-10}) \text{ such that }\p (rQ_j (x))\text{ does not}\\
&\text{ touch any bad }k\text{-cube for any }k\geq j. 
\end{split}
\eqne 
(Recall that $Q_j(x)$ denotes the $j$-cube centered at $x$.) We will refer to $\p (rQ_j (x)) $ as the \emph{barrier}, and to \eqref{barrier_property} as the \emph{barrier property}.
We first discuss a simple geometric lemma.
\begin{lemma}[Geometric Lemma]\label{lem_geom}
Let $Q=Q(y)$, $Q'=Q'(x)$ be open cubes with sidelengths $2a$, $2b$, respectively. Then 
\[\p (rQ )\text{ intersects }Q' \Rightarrow r\in [r_{Q'} - b/a,r_{Q'}+b/a],\]
where $r_{Q'} >0$ is such that $x\in \p (r_{Q'}Q)$.
\end{lemma}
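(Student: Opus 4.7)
My plan is to translate the geometric incidence condition into an algebraic statement about the $\ell^\infty$-norm and then apply the triangle inequality. Since the cubes appearing throughout the paper are axis-aligned (the $Q$'s originate from the dyadic/almost-dyadic grid described in Section~\ref{sec_cubes}), the boundary $\p(rQ)$ of the dilate of $Q=Q(y)$ of sidelength $2a$ is precisely the level set
\[
\p(rQ) = \{ z\in\RR^3 \colon \| z-y \|_\infty = ra \},
\]
and the open cube $Q'=Q'(x)$ of sidelength $2b$ is $\{ z \colon \| z-x\|_\infty < b\}$. In this language, the definition of $r_{Q'}$ (namely $x \in \p(r_{Q'}Q)$) becomes the single identity $\| x-y\|_\infty = r_{Q'}a$.

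Assume now that $\p(rQ)$ meets $Q'$ and pick a witness $z \in \p(rQ)\cap Q'$. Then $\| z-y\|_\infty = ra$ and $\| z-x\|_\infty <b$. The two triangle inequalities
\[
ra = \| z-y\|_\infty \le \| x-y\|_\infty + \| z-x\|_\infty < r_{Q'}a + b
\]
and
\[
r_{Q'}a = \| x-y\|_\infty \le \| z-y\|_\infty + \| z-x\|_\infty < ra + b
\]
immediately give $r_{Q'} - b/a < r < r_{Q'} + b/a$, which is a stronger (open) version of the claimed membership $r \in [r_{Q'} - b/a,\, r_{Q'} + b/a]$.

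Given that the only ingredient is the triangle inequality for the sup-norm after a short identification of boundaries with level sets, there is no real obstacle; the only point worth care is making sure that ``cube'' is interpreted as an $\ell^\infty$-ball (consistent with the convention introduced in Section~\ref{sec_cubes}, where $aQ$ is defined as the cube with the same center as $Q$ and $a$ times the sidelength) so that the dilation $rQ$ literally corresponds to scaling the sup-norm distance from $y$. The inclusion is then insensitive to whether $Q'$ is taken open or closed: in the open case, one obtains the strict inclusion in the interior of $[r_{Q'}-b/a, r_{Q'}+b/a]$, which is what will be invoked later in constructing the barrier in \eqref{barrier_property}.
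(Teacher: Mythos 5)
Your proof is correct, and it takes a genuinely different and in fact cleaner route than the paper's. The paper argues by a case analysis on the position of the center $y$ relative to $Q'$ (on $\p Q'$, outside $\overline{Q'}$, or inside $Q'$), supported by a figure, and in each case reads off the range of $r$ by direct inspection. You instead identify axis-aligned cubes with $\ell^\infty$-balls, so that $\p(rQ) = \{ z : \|z-y\|_\infty = ra\}$, $Q' = \{ z : \|z-x\|_\infty < b\}$, and $r_{Q'}a = \|x-y\|_\infty$, and then the claim collapses to two applications of the triangle inequality for the sup-norm. This avoids the case split entirely, yields the slightly stronger open inclusion $r\in (r_{Q'}-b/a, r_{Q'}+b/a)$ for free, and makes the (otherwise implicit) hypothesis transparent: the cubes must be axis-aligned with a common orientation so that $rQ$ really is scaling the sup-norm distance from $y$ and the bound $\|z-x\|_\infty<b$ holds for $z\in Q'$. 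That hypothesis is satisfied in the paper's application since $Q$ and the $Q'$'s all come from the same near-dyadic grid. The paper's case-by-case approach is more visual and arguably more robust if one wanted to drop the common-orientation assumption, but for the use made of the lemma your argument is shorter and more transparent.
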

\begin{proof}
\begin{figure}[h]
\centering
 \includegraphics[width=\textwidth]{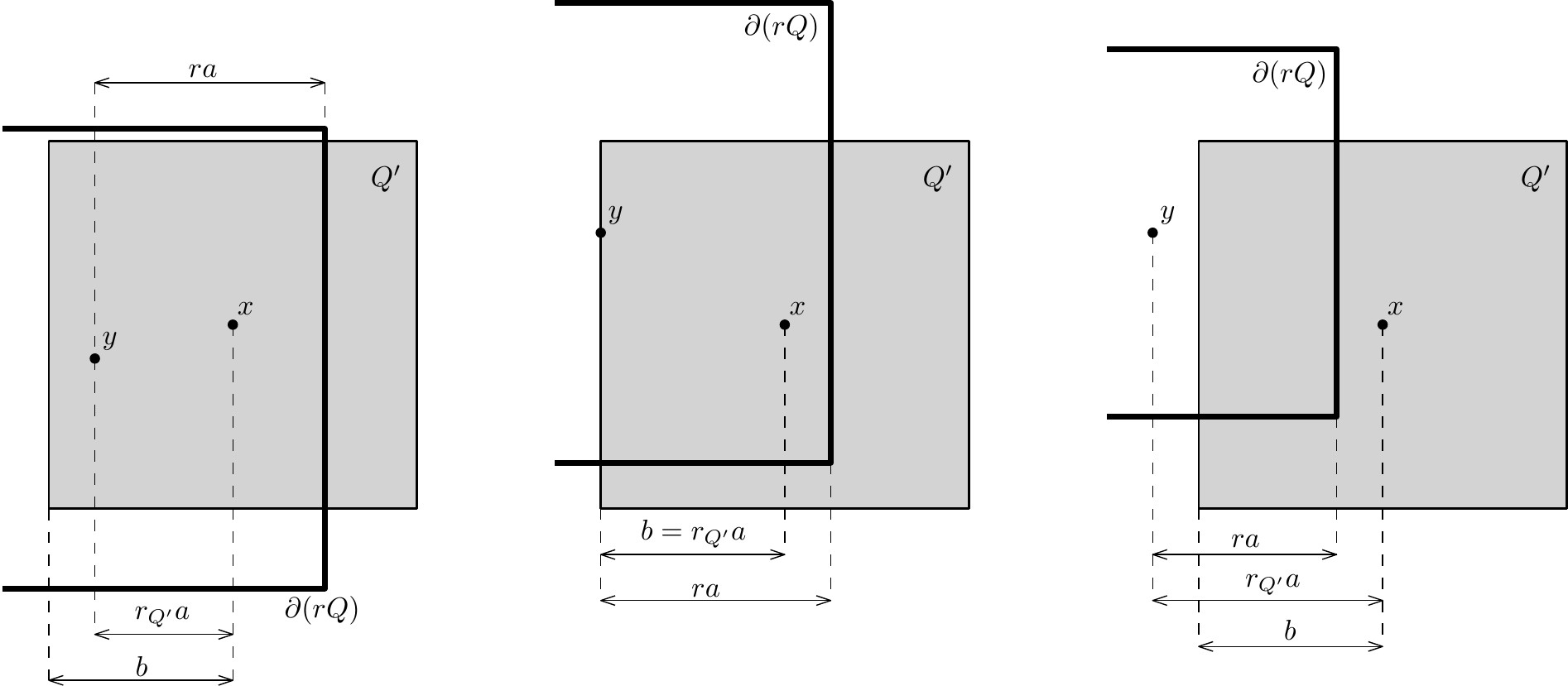}
 \nopagebreak
  \captionof{figure}{Sketch of the interpretation of Lemma \ref{lem_geom}.}\label{fig_lem_geom} 
\end{figure}
We will write $\gamma \coloneqq b/a$ for brevity. We split the reasoning into cases.\vspace{0.3cm}\\
\emph{Case 1.} $y\in \p Q'$.

Then $r_{Q'}= b/a$ (see Figure \ref{fig_lem_geom} (middle)) and so $r\geq r_{Q'}-b/a$ trivially. Moreover $\p (rQ )$ intersects $Q'$ if and only if $ra<2b$ (see Figure \ref{fig_lem_geom} (middle)), that is $r<2b/a=r_{Q'}+b/a$, as required.\vspace{0.3cm}\\
\emph{Case 2.}  $y\not \in \overline{Q'}$. 

Then $r_{Q'}>b/a$ (which is clear by comparison with Case 1), and $\p (rQ )$ intersects $Q'$ if and only if
\[
r_{Q'}a-b < ra < r_{Q'}a +b
\]
(see Figure \ref{fig_lem_geom} (right)), as required.\vspace{0.3cm}\\
\emph{Case 3.} $y\in Q'$.

Then $r_{Q'} < b/a$ and $\p (rQ)$ intersects $Q'$ if and only if 
\[
b-r_{Q'} a < ra < r_{Q'} a + b
\]
(see Figure \ref{fig_lem_geom} (left)). The claim follows by ignoring the first of these two inequalities (and writing $r\geq 0 > r_{Q'}-b/a$ instead).
\end{proof}
We can now construct the $j$-cover satisfying the barrier property \eqref{barrier_property}.
\begin{lemma}
For every $j\geq 0$ there exists a $j$-cover ${\mathcal{B}}_j$ of $A_j$ such that $\# {\mathcal{B}}_j \leq c \, 2^{j(5-4\alpha + \varepsilon )}$ and the barrier property \eqref{barrier_property} holds.
\end{lemma}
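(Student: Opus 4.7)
The plan is to enlarge $\mathcal{A}_j$ to a $j$-cover $\mathcal{B}_j$ that absorbs the effect of bad cubes at all higher scales $k\geq j$. Fix a partition of $\RR^3$ into disjoint $j$-cubes, which I will call \emph{grid $j$-cubes}, and for $x\in\RR^3$ let $F(x)\subset (0,2^{-10})$ denote the \emph{forbidden set} of radii, i.e.\ those $r$ for which $\partial(rQ_j(x))$ meets some bad $k$-cube with $k\geq j$. The barrier property \eqref{barrier_property} at $x$ then reduces to finding $r\in(0,2^{-10})\setminus F(x)$, which is guaranteed whenever $|F(x)|<2^{-10}$.

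Applying Lemma~\ref{lem_geom} to $Q=Q_j(x)$ (half-sidelength $a_j\coloneqq 2^{-j(1-\varepsilon)-1}$) and any $Q'\in\mathcal{A}_k$ (half-sidelength $b_k\coloneqq 2^{-k(1-\varepsilon)-1}$, center $c_{Q'}$), the radii $r$ for which $\partial(rQ_j(x))$ touches $Q'$ form an interval of length $\leq 2b_k/a_j=2\cdot 2^{(j-k)(1-\varepsilon)}$, centered at $|c_{Q'}-x|_\infty/a_j$, and intersecting $(0,2^{-10})$ only when $|c_{Q'}-x|_\infty\leq (2^{-10}+2^{(j-k)(1-\varepsilon)})a_j\leq 2a_j$, where the final inequality holds uniformly in $k\geq j$. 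For each grid $j$-cube $Q$ with center $c_Q$, set $\tilde Q\coloneqq c_Q+[-3a_j,3a_j]^3$; then any $Q'\in\mathcal{A}_k$ contributing to $F(x)$ for some $x\in Q$ has $c_{Q'}\in\tilde Q$. Defining
\[
w(Q)\coloneqq 2\sum_{k\geq j}2^{(j-k)(1-\varepsilon)}\,\#\{Q'\in\mathcal{A}_k:c_{Q'}\in\tilde Q\},
\]
it follows that $|F(x)|\leq w(Q)$ for all $x\in Q$. I would then take $\mathcal{B}_j$ to be the union of $\mathcal{A}_j$ with all grid $j$-cubes $Q$ for which $w(Q)\geq 2^{-11}$.

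The cardinality bound follows by averaging: since each fixed $Q'$ has $c_{Q'}\in\tilde Q$ for only $O(1)$ grid $j$-cubes $Q$, swapping sums and applying \eqref{card_of_Aj} at each level gives
\[
\sum_Q w(Q)\leq c\sum_{k\geq j}2^{(j-k)(1-\varepsilon)}\,\#\mathcal{A}_k\leq c\,2^{j(1-\varepsilon)}\sum_{k\geq j}2^{k(4-4\alpha+2\varepsilon)}\leq c\,2^{j(5-4\alpha+\varepsilon)},
\]
where the geometric series converges and is dominated by its $k=j$ term because $4-4\alpha+2\varepsilon<0$, a consequence of $\varepsilon<(4\alpha-4)/3$ from \eqref{how_small_is_eps}. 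Markov's inequality then bounds $\#\{Q:w(Q)\geq 2^{-11}\}\leq c\,2^{j(5-4\alpha+\varepsilon)}$, which combined with $\#\mathcal{A}_j\leq c\,2^{j(5-4\alpha+\varepsilon)}$ yields the required estimate on $\#\mathcal{B}_j$; and for any $x\notin\mathcal{B}_j$, the grid $j$-cube containing $x$ satisfies $w(Q)<2^{-11}<2^{-10}$, so $|F(x)|<2^{-10}$ and any $r\in(0,2^{-10})\setminus F(x)$ furnishes a barrier. The key technical point is the uniform-in-$k$ estimate $(2^{-10}+2^{(j-k)(1-\varepsilon)})a_j\leq 2a_j$ valid for \emph{every} $k\geq j$: it is what permits assigning each bad $k$-cube to only a $k$-independent constant number of grid $j$-cubes via $\tilde Q$, which in turn is exactly what makes the double sum match the target cardinality $2^{j(5-4\alpha+\varepsilon)}$ without loss.
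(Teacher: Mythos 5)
Your argument is correct and follows a genuinely different route from the paper. The paper introduces a notion of \emph{$k$-naughty} $j$-cubes (those intersecting more than $\eta\,2^{(k-j)(5-4\alpha+2\varepsilon)}$ elements of $\mathcal{A}_k$), builds a greedy/Vitali-type cover $\mathcal{B}_{j,k}$ of the $k$-naughty cubes for each $k\geq j$, takes $\mathcal{B}_j = \cup_{k\geq j}\mathcal{B}_{j,k}$, and then verifies the barrier property for non-naughty cubes by estimating the $L^1$-norm of the indicator of forbidden radii scale by scale, choosing the free parameter $\eta$ small at the end. Your proof instead fixes a grid of $j$-cubes, assigns each grid cube $Q$ a single scalar weight $w(Q)$ that directly majorizes $\sup_{x\in Q}|F(x)|$, bounds $\sum_Q w(Q)$ via Fubini plus the bounded-overlap of the enlargements $\tilde Q$, and extracts the exceptional cubes by Markov's inequality. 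Both arguments rest on the same geometric ingredient (Lemma \ref{lem_geom}, which converts intersection of $\partial(rQ)$ with a smaller cube into an interval constraint on $r$ of length $\sim 2^{(j-k)(1-\varepsilon)}$), and both are fuelled by the cardinality bound $\#\mathcal{A}_k \leq c\,2^{k(5-4\alpha+\varepsilon)}$ and the convergence of the resulting geometric series, which uses $4-4\alpha+2\varepsilon<0$ exactly as the paper uses $4\alpha-4-3\varepsilon>0$. What your route buys is a one-pass, threshold-free construction: there is no per-scale naughtiness cutoff and no auxiliary parameter $\eta$ to tune; the single absolute threshold $2^{-11}$ on $w(Q)$ does all the work, and the averaging argument makes the cardinality bound transparent. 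The paper's route, by contrast, gives a per-scale decomposition $\mathcal{B}_j=\cup_k\mathcal{B}_{j,k}$ which is conceptually closer to the Katz--Pavlovi\'c original but requires an extra iteration. One minor remark: you should note explicitly that when $\partial(rQ_j(x))$ avoids all elements of $\mathcal{A}_k$ it a fortiori avoids all bad $k$-cubes (since $\mathcal{A}_k$ covers the latter), so the quantity you majorize genuinely dominates $|F(x)|$ as stated; but this is the same observation the paper makes implicitly at the end of its Step~4, and poses no difficulty.
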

\begin{proof} (Here we follow the argument from \cite{katz_pavlovic}.)
We will find a $j$-cover (also denoted by ${\mathcal{B}}_j$) of $A_j$ such that 
\eqnb\label{barrier_prop_alt}
\begin{split}
\text{ for any }j\text{-cube }Q\text{ outside of }{\mathcal{B}}_j&\text{ there exists }r\in (0,2^{-10})\text{ such that }\p (rQ)\text{ does not}\\
&\text{ touch any bad }k\text{-cube for any }k\geq j.
\end{split}
\eqne
(Here ``outside'' is a short-hand notation for ``disjoint with every element of''.)
The barrier property \eqref{barrier_property} is then recovered by replacing every $j$-cube $Q\in {\mathcal{B}}_j$ by $3Q$ and covering it by at most $4^3$ $j$-cubes. Indeed, then for any $x$ outside of such set we have that $Q_j(x)$ (the $j$-cube centered at $x$) is outside of ${\mathcal{B}}_j$ and so the barrier property \eqref{barrier_property} follows from \eqref{barrier_prop_alt}.\vspace{0.4cm}\\
\emph{Step~1.} We define \emph{naughty} $j$-cubes.\\

We say that a $j$-cube $Q$ is $k$-\emph{naughty}, for $k\geq j$, if it intersects more than $\eta 2^{(k-j)(5-4\alpha +2\varepsilon )}$ elements of  ${\mathcal{A}_k}$. Here $\eta \in (0,1)$ is a universal constant, whose value we fix in Step~4 below. We say that a $j$-cube is \emph{naughty} if it is $k$-naughty for any $k\geq j$. (Note that a bad cube is naughty. A good cube is not necessarily naughty, and vice versa.)\vspace{0.4cm}\\
\emph{Step~2.} For each $k\geq j$ we construct a $j$-cover $\mathcal{B}_{j,k}$ of all $k$-naughty $j$-cubes, such that
\eqnb\label{bound_on_cardinality_of_B_jk}
\#  \mathcal{B}_{j,k}  \leq c \eta^{-1} 2^{j(5-4\alpha + \varepsilon)} 2^{\varepsilon (j-k)}.
\eqne
(Note that $\mathcal{B}_{j,j}$ covers all $j$-naughty $j$-cubes, and so in particular all bad $j$-cubes.)\\

Let $Q^{(1)}$ be any $k$-naughty $j$-cube. Given $Q^{(1)}, \ldots , Q^{(l)}$ let $Q^{(l+1)}$ be any $k$-naughty $j$-cube that is disjoint with with each of $3Q^{(1)}, \ldots , 3Q^{(l)}$. Note that then $3Q^{(1)}, \ldots , 3Q^{(l)}$ contain all elements of $\mathcal{A}_k$ that $Q^{(1)}, \ldots , Q^{(l)}$ intersect. This means that $Q^{(l+1)}$ intersects at least $\eta \,2^{(k-j)(5-4\alpha +2\varepsilon )}$ ``new'' elements of $\mathcal{A}_k$ (i.e. the elements that none of $Q^{(1)}, \ldots ,Q^{(l)}$ intersect). This means that such an iterative definition can go on for at most
\[
L\coloneqq \# {\mathcal{A}_k}/ \eta 2^{(k-j)(5-4\alpha +2\varepsilon )} \leq c \eta^{-1} 2^{j(5-4\alpha + \varepsilon)} 2^{\varepsilon (j-k)}
\]
steps, and then the family $\{ 3Q^{(1)}, \ldots , 3Q^{(L)}\}$ covers all $k$-naughty $j$-cubes. We now cover each of $3Q^{(l)}$ ($l=1,\ldots , L$) by at most $4^3$ $j$-cubes to obtain $\mathcal{B}_{j,k}$. (Note \eqref{bound_on_cardinality_of_B_jk} then follows from the upper bound on $L$.)\vspace{0.4cm}\\
\emph{Step~3.} We define $\mathcal{B}_j$.\\

Let
\[
\mathcal{B}_j \coloneqq \bigcup_{k\geq j} \mathcal{B}_{j,k}.
\]
By construction, $ \mathcal{B}_j$ covers all naughty $j$-cubes (and so, in particular, all bad $j$-cubes) and 
\[\#  \mathcal{B}_{j} \leq \sum_{k\geq j}\#  \mathcal{B}_{j,k}  \leq  c \eta^{-1} 2^{j(5-4\alpha + \varepsilon)} \sum_{k\geq j}2^{\varepsilon (j-k)} =c \eta^{-1} 2^{j(5-4\alpha + \varepsilon)},
\]
as required (given $\eta $ is fixed).\vspace{0.4cm}\\
\emph{Step~4.} We show that \eqref{barrier_prop_alt} holds for sufficiently small $\eta \in (0,1)$. (This, together with the previous step, finishes the proof.)\\

Let $Q$ be a $j$-cube disjoint with all elements of $\mathcal{B}_j$.  Let us denote by $\mathcal{C}^k (Q)$ the collection of $k$-cubes $Q'$ ($k\geq j$) from $\mathcal{A}_k$ intersecting $Q$. Since $Q$ is not naughty (as otherwise it would be covered by $\mathcal{B}_j$) 
\[  \# \mathcal{C}^k (Q)\leq \eta 2^{(k-j)(5-4 \alpha + 2 \varepsilon )}.\] 

Let $r_{Q'}\in (0,\infty )$ be such that $\p (r_{Q'} Q)$ contains the center of $Q'$. Applying Lemma \ref{lem_geom} with $2a= 2^{-j(1-\varepsilon )}$ and $2b=2^{-k(1-\varepsilon )}$ we obtain that 
\[\p (r Q) \text{ intersects  }Q' \Rightarrow r \in [r_{Q'} - 2^{(1-\varepsilon )(j-k)},r_{Q'} + 2^{(1-\varepsilon )(j-k)} ].
\] Thus if $f_k (r)$ denotes the number of bad $k$-cubes that intersect $\p (rQ )$ then
\[
f_k (r) \leq \sum_{Q'\in \mathcal{C}^k (Q)} \chi_{\left[ r_{Q'} - 2^{(1-\varepsilon )(j-k)},r_{Q'} + 2^{(1-\varepsilon )(j-k)} \right]} (r).
\]
Thus
\[
\| f_k \|_{L^1 (0 ,2^{-10})} \leq 2 \#  \mathcal{C}^k (Q) 2^{(1-\varepsilon )(j-k)} \leq 2 \eta 2^{(4\alpha -4 -3\varepsilon )(j-k)},
\]
and so letting $f\coloneqq \sum_{k\geq j} f_k$ and recalling that $\alpha >1$ and $\varepsilon $ is small enough so that $4\alpha - 4-3\varepsilon >0$ (see \eqref{how_small_is_eps}) we obtain
\[
\| f \|_{L^1 (0 ,2^{-10})}\leq \sum_{k\geq j}\| f_k \|_{L^1 (0 ,2^{-10})} \leq c\eta .
\]
(This is the only place in the article where we need the assumption $\alpha >1$; otherwise $\alpha \geq 1$ would be sufficient.)
By choosing $\eta \in (0,1)$ sufficiently small such that $c\eta < 2^{-10}/2$ we see that $\| f \|_{L^1 (0 ,2^{-10})}< 2^{-10}$, and so there exists $r\in (0 ,2^{-10})$ such that $f(r)=0$ (recall that $f$ takes only integer values). In other words there exists $r$ such that $\p (rQ)$ does not intersect any element of $\mathcal{A}_k$ for any $k\geq j$, and so in particular any bad $k$-cube.
  \end{proof}
We now let 
\[
E \coloneqq \limsup_{j\to \infty } \bigcup_{Q\in \mathcal{B}_j} Q.
\]
Observe that, since $\# \mathcal{B}_j \leq c\, 2^{j(5-4\alpha + \varepsilon )}$, 
\[
d_H (E) \leq 5-4\alpha + \varepsilon ,
\]
see, for example, Lemma 3.1 in \cite{katz_pavlovic} for a proof.

\subsection{Regularity outside $E$}\label{sec_reg_outside_E}
We now show that for every $x\not \in E$ and every interval of regularity $(a_i,b_i)$ there exists an open neighbourhood of $x$ on which $u(t)$ remains bounded (as $t\in ((a_i+b_i)/2,b_i) $). This together with the above bound on $d_H(S)$ finishes the proof of Theorem \ref{thm_main}.

Note that if $x\not \in E$ then for sufficiently large $j_0$
\[
x\not \in Q\quad \text{ for any } Q\in \mathcal{B}_j \text{ for }j\geq j_0.
\]
In particular
\eqnb\label{barrier_conseq1}
x\text{ does not belong to any bad }j\text{-cube for }j\geq j_0
\eqne
(since $\mathcal{B}_j$ is a cover of all bad $j$-cubes), and
for any $j_1\geq j_0$ there exists $r=r(x,j_1)\in (0 ,2^{-10})$ such that
\eqnb\label{barrier_conseq2}
\p (rQ_{j_1}(x)) \quad \text{ does not intersect any bad }k\text{-cube with }k\geq j_1
\eqne
(by the barrier property, \eqref{barrier_property}).
The point is that the barrier can be constructed for any $j_1\geq j_0$. This will be relevant for us, since in the proof of regularity below we will consider a $j$-cube with $j\geq j_1\geq j_0/\theta^2 $. Thus we will be able to deal with some of the low modes ($k\in [ \theta  j, j-5 ] )$) using \eqref{barrier_conseq1} and other using \eqref{barrier_conseq2}. Indeed, for such modes we will have ``cubes larger than $j$-cube'' (i.e. $Q_k$ with $k<j$) and we will obtain the critical decay on such cubes by either utilising the barrier property \eqref{barrier_conseq2} (for cubes that are only ``a little bit larger'', see Case 1 in Step~2 for details) or the fact that distant ancestors are large enough to contain $x$ so that we can use \eqref{barrier_conseq1}. As for local and high modes (i.e. $k\geq j-5$) we will use the barrier property \eqref{barrier_conseq2} to obtain critical regularity for cubes located near the barrier, with more and more regularity on cubes located further away from the barrier towards the interior. In fact we can guarantee arbitrary strong estimate for cubes located sufficiently far from the barrier, but we limit ourselves to the estimate $\lesssim 2^{-j(5-4\alpha + 10)/2}$.

We now proceed to a rigorous version of the above explanation.
\begin{theorem}[Regularity outside $E$]\label{thm_reg_outside_E}
Let $x\not \in E$. Given an interval of regularity $(a_i,b_i)$ there exists $c_i>1$ and  $j_1=j_1(c_i) \in \NN $ such that
\eqnb\label{better_than_critical_decay}
u_Q (t) < c_i 2^{-j \rho (Q) /2}
\eqne
for all $t\in ((a_i+b_i)/2,b_i)$ and for every $j$-cube $Q\subset r Q_{j_1} (x)$, where $r\in (0,2^{-10}) $ is as in \eqref{barrier_conseq2}, 
 \[
 \rho (Q) \coloneqq 5-4\alpha + \min (10, \varepsilon \delta (Q) /10 )
 \]
 and $\delta (Q)$ denotes the smallest $k\in \NN$ such that $Q_{j-k}$ intersects $\p (rQ_{j_1} (x))$.
\end{theorem}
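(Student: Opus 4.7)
The proof will be a continuity/contradiction argument in time, mirroring the strategy of Theorem~\ref{thm_j_good_gives_critical regularity}. The single new ingredient is the barrier: the goodness of ancestors assumed in Theorem~\ref{thm_j_good_gives_critical regularity} is replaced here by a case analysis exploiting the barrier property \eqref{barrier_conseq2} together with \eqref{barrier_conseq1}. For ancestors that stay strictly inside $rQ_{j_1}(x)$, the required estimate will be inherited from an implicit induction on cubes given by the minimality of the first failure time.

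First, since $u$ is smooth on any compact subinterval of $(a_i, b_i)$, one has $\|P_j u(t_*)\| \leq c_N 2^{-Nj}$ at $t_* \coloneqq (a_i+b_i)/2$ for any $N$. Choosing $N>(5-4\alpha+10)/2$ and $c_i$ large enough (depending on $\|u(t_*)\|_{H^N}$), the conclusion \eqref{better_than_critical_decay} holds strictly at $t=t_*$ for every $j\geq j_1$ and every $j$-cube $Q\subset rQ_{j_1}(x)$. Continuity of each $u_Q$ on $[t_*,b_i)$ (which follows from Proposition~\ref{prop_basic_estimate}) permits a contradiction argument: suppose $t_0\in(t_*,b_i)$ is the first failure time, realised by some $j$-cube $Q_\star\subset rQ_{j_1}(x)$ with $u_{Q_\star}(t_0)=c_i 2^{-j\rho(Q_\star)/2}$. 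Let $t_1\in[t_*,t_0)$ be the last time with $u_{Q_\star}(t_1)\leq\tfrac12 c_i 2^{-j\rho(Q_\star)/2}$. By minimality of $t_0$, the pointwise bound $u_{Q'}(t)<c_i 2^{-j'\rho(Q')/2}$ holds for every $t<t_0$ and every $j'$-cube $Q'\subset rQ_{j_1}(x)$ with $j'\geq j_1$.

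The task is to bound, term by term, the RHS of \eqref{basic_estimate_jcubes} integrated on $[t_1,t_0]$. For the $k$-cubes $Q''$ covering $3Q_\star/2$ with $k\geq j-5$, either $Q''\subset rQ_{j_1}(x)$ (whence the inductive pointwise bound applies, with $\delta(Q'')$ differing from $\delta(Q_\star)$ by $O(1)$), or $Q''$ meets the barrier $\partial(rQ_{j_1}(x))$ and is thus a good $k$-cube by \eqref{barrier_conseq2}. For low-mode ancestors $(Q_\star)_k$ with $k\in[\theta j,j-5]$, a similar trichotomy applies: $(Q_\star)_k$ is either strictly inside the barrier (inductive pointwise bound), or meets the barrier (good by \eqref{barrier_conseq2}), or, once $k$ is small enough so that $(Q_\star)_k \supset rQ_{j_1}(x)$, it contains $x$ and is therefore good by \eqref{barrier_conseq1}. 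This trichotomy requires $j_1\geq j_0/\theta$ (ensuring $\theta j\geq j_0$), and the third scenario is available precisely because $r<2^{-10}$ creates enough room between $j_1$ and the barrier scale.

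Assembling these integrated estimates on $\int u_{3Q_\star/2,k}^2$ and $\int u_{(Q_\star)_k}^2$, and rerunning the closing calculation of Theorem~\ref{thm_j_good_gives_critical regularity}, the positive contributions on the RHS reduce to a product of the form $c_i^{O(1)}\,2^{-j\rho(Q_\star)}\,2^{-c'j\varepsilon}$ with $c'>0$ independent of $c_i$, modulo the time factor $(t_0-t_1)$ controlled via the Gronwall form of the differential inequality. This yields the desired contradiction for $j_1$ taken sufficiently large (depending on $c_i$). The principal obstacle is the trichotomy above: one must carefully track the depth $\delta$ across scales $k$, align the three scenarios so that the required estimates are all simultaneously available as $k$ ranges over $[\theta j,j-5]$, and confirm that the factors of $c_i^{O(1)}$ produced by pointwise inductive bounds are dominated by the exponential gain $2^{-c'j\varepsilon}$ coming from $\rho(Q_\star)\geq 5-4\alpha+\min(10,\varepsilon\delta(Q_\star)/10)$; the cap at $10$ in $\rho(Q_\star)$ is precisely what makes these combinatorics closable for all depths.
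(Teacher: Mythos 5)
Your proposal follows the same broad strategy as the paper (continuity/contradiction in time, a case analysis on where cubes sit relative to the barrier, invoking Theorem~\ref{thm_j_good_gives_critical regularity} near the barrier, and the cap at $10$ in $\rho$ to close the combinatorics), but there are three genuine gaps. The most significant: you implicitly assume that the first failure time $t_0$ is \emph{realized} by some single $j$-cube $Q_\star$ with $u_{Q_\star}(t_0)=c_i 2^{-j\rho(Q_\star)/2}$. Since infinitely many $u_{Q'}$ are in play, this is not automatic: all $u_{Q'}$ could stay at or below threshold up to $t_0$, with a sequence of cubes of increasing $j$ crossing at times accumulating at $t_0^+$. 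Ruling this out uses the fact that $u$ is continuous in time into $H^6$ on the interval of regularity (the paper's Step~1); this is exactly the localization issue the paper emphasizes was glossed over in \cite{katz_pavlovic}, and your write-up glosses over it in the same way.

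Two further issues concern the low-mode ancestors. Your trichotomy concludes that barrier-touching or $x$-containing cubes are ``good,'' but goodness is an \emph{integrated-in-time} property (\eqref{j-good}) and does not by itself produce the pointwise bound the contradiction argument needs. One must run Theorem~\ref{thm_j_good_gives_critical regularity} at scale $k$, which requires goodness of the entire chain $(Q_\star)_{l-10}$, $l\in[\theta k,k]$ (not just of $(Q_\star)_k$ itself); the paper handles this via the fact \eqref{q'_fact} that cocentric larger cubes also touch the barrier or contain $x$. This also forces the correction of your threshold: since $k$ ranges down to $\theta j$ and the ancestors of those go down to $\theta^2 j -10$, you need $j_1\geq(j_0+10)/\theta^2$, not the $j_1\geq j_0/\theta$ you state. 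Finally, you allow the $k$-cubes covering $3Q_\star/2$ ($k\geq j-5$) to touch the barrier; the paper instead first shows $\delta(Q_\star)\geq 11$ (its Step~2, itself an application of Theorem~\ref{thm_j_good_gives_critical regularity}), which guarantees those nearby cubes stay strictly inside the barrier and the inductive pointwise bound applies — without this preliminary, your estimate on the local and high modes does not close.
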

Note that the theorem gives no restriction on the range of $j$'s, but it is clear from the inclusion $Q\subset r Q_{j_1} (x)$ that $j\geq j_1 + 10$ (as $r<2^{-10}$).
\begin{proof}
Since $u$ is a strong solution in $(a_i,b_i)$, it is continuous in time in $(a_i,b_i)$ with values in $H^6$ (recall \eqref{int_of_reg}). Thus letting \[
c_i \coloneqq 1+ c\left\| u\left(\frac{a_i+b_i}2 \right) \right\|_{H^6}
\] we see that, for any $j$-cube $Q$, $u_Q ((a_i+b_i)/2) \leq \| P_j u ((a_i+b_i)/2) \| < c_i 2^{-6j} $, and hence also $u_Q(t) <c_i$ for some $t>(a_i+b_i)/2$ (due to the continuity of the $H^6$ norm). Thus the claim remains valid on some nonempty time interval following $(a_i+b_i)/2$ (since $\rho (Q) \leq 5-4\alpha +10\leq 11$). 

Since the interval of regularity $(a_i,b_i)$ is fixed, from now on we will suppress the subindex ``$i$'', for brevity. 

We take $j_0$ sufficiently large so that \eqref{barrier_conseq1} and the claims of Corollary~\ref{cor_basic_est_for_jcubes} and Theorem~\ref{thm_j_good_gives_critical regularity} are valid (we will let $j_0$ even larger below). We let $j_1$ be the smallest integer such that
\eqnb\label{choice_of_j1}
j_1 \geq (j_0 +10)/\theta^2.
\eqne

We also note that if 
\eqnb\label{q'_fact}
\begin{split}
Q'(y)\text{ is a }k\text{-cube centered at }y\in rQ_{j_1} (x) &\text{ and touching the barrier }\p (rQ_{j_1}(x))\\
&\text{ then }Q'\text{ is good if }k\geq j_0. 
\end{split}
\eqne
Indeed, if $k\geq j_1$ then $Q'$ is good by the barrier property \eqref{barrier_conseq2}. If $k<j_1 $ then $Q' \supset r Q_{j_1} (x)\ni x$ (as the sidelength of $Q'(y)$ is more than $2^{10}$ times larger than the sidelength of $r Q_{j_1} (x)\ni y$), and so $Q'$ is good by \eqref{barrier_conseq1}.\\

Suppose that the theorem is false and let $t_0> (a+b)/2$ be the first time when it fails. Then 
\eqnb\label{ok_until_t0}
u_{Q'} (t) \leq  c \,2^{-k \rho (Q') /2} \quad \text{ for all }t\in [0,t_0]\text{ and all }k\text{-cubes }Q'\subset r Q_{j_1} (x)
\eqne
and there exists a $j$-cube $Q\subset r Q_{j_1} (x)$ (for some $j\geq 0$) such that 
\eqnb\label{fail_at_t0}
u_Q (t_0) \geq  2^{-j\rho (Q)/2}.
\eqne
We note that the existence of such $Q$ is nontrivial, since there are infinitely many functions $u_{Q'}(t)$ for $Q'\subset r Q_{j_1} (x)$. In fact one can think of a scenario when all such $u_{Q'}$'s remain close to zero until $t_0$ with a sequence of $u_{Q'}$'s growing faster and faster past $t_0$ (in such scenario \eqref{ok_until_t0} holds but not \eqref{fail_at_t0}). We verify in Step~1 below that such a scenario does not happen (i.e. that such $Q$ exists) as long as $t_0$ lies inside $(a,b)$.\footnote{This is the localisation issue that we have referred to in the introduction. This issue has been ignored in \cite{katz_pavlovic}.}

We now let $t_1 \in (0,t_0)$ be the last time such that $u_Q(t_1) = \frac{1}{2} 2^{-j \rho (Q)/2 }$. Then
\eqnb\label{uQ_between_t1_t0}
u_Q (t) \in  [2^{-j\rho(Q)/2}/2, 2^{-j\rho(Q)/2} ] \quad \text{ for } t\in [t_1,t_0].
\eqne
The main estimate \eqref{basic_estimate_jcubes} gives
\eqnb\label{basic_est_rewritten_for_full_reg}\begin{split}
2^{-j\rho (Q) } &= \frac{4}{3} \left( u_Q(t_0)^2 - u_Q(t_1)^2 \right) \\
&\leq  -c\,2^{2\alpha j }\int_{t_1}^{t_0}  u_{Q}^2+c \int_{t_1}^{t_0} u_{Q} \left( 2^j u_{3Q/2 ,j\pm 2} \sum_{\theta  j \leq k\leq j-5 } 2^{3k/2} u_{Q_k}   \right. \\
&\hspace{2cm}\left.+ 2^{5j/2}  u_{3Q/2 ,j\pm 4}^2+ 2^{3j/2} \sum_{k\geq j+1} 2^{k} u_{3Q/2 ,k}^2  \right) \\
&\hspace{2cm}+ c \int_{t_1}^{t_0} 2^{2\alpha j } 2^{-j\varepsilon } u_{3Q/2,j\pm 2}^2   +e(j) ,
\end{split}
\eqne
where we omitted time argument in our notation. Note that we can write 
\[
e(j) \leq c\, 2^{-20j}
\]  
(recall the beginning of Section \ref{sec_prelims} for the definition of $e(j)$, the $j$-negligible error), so that it can be ignored (i.e. it can be absorbed into the left-hand side for sufficiently large $j$). We will estimate the terms appearing on the right-hand side of \eqref{basic_est_rewritten_for_full_reg} in steps 2-4 below, and we will conclude the proof in Step~5. \vspace{0.4cm}\\
\emph{Step~1.} We verify \eqref{fail_at_t0}. 

Let $m\in \NN$. By definition of $t_0$ there exists $\tau \in (t_0,t_0+1/m)$ and a $j$-cube ${Q}$ such that $u_{Q} (\tau )\geq c\,2^{-j\rho ({Q})/2}$. We claim that \eqref{fail_at_t0} holds for such ${Q}$ if $m$ is taken sufficiently large. Indeed, if it does not, then $2^{j\rho ({Q})/2} u_{Q}(t_0 )\leq 1$ for each $m$, and so
\[
c-1 \leq 2^{j\rho ({Q})/2}  \left( u_{Q} (\tau ) - u_{Q} (t_0) \right) \leq  2^{11j/2} \left\|  \phi_{Q}^2  P_j ( u(\tau ) - u (t_0 ) )  \right\|  \leq c \| u(\tau)-u(t_0) \|^2_{H^6 (\RR^3)}
\]
for all $m$, uniformly in $j$, and so continuity of $u$ in time (on $(a,b)$) with values in $H^6$ gives a contradiction for sufficiently large $m$. (Note that, for simplicity, we have omitted the dependence of $\tau$ and $Q$ on $m$ in the notation above.)  \vspace{0.4cm}\\
\emph{Step~2.} We observe that $\delta (Q) \geq 11$, so that in particular 
\eqnb\label{rho(Q)_geq_5-4alpha+eps}
\rho (Q) \geq 5-4\alpha + \varepsilon.
\eqne

In order to see this note that if $\delta (Q) \leq 10$ then $Q_{j-10}$ touches $\partial ( r Q_{j_1} (x))$. Thus \eqref{q'_fact} implies that $Q_{k-10}$ is good for every $k\in [\theta j,j ]$, since 
\[k-10\geq \theta j -10 \geq \theta j_1 - 10 \geq j_0 \]
by our choice \eqref{choice_of_j1} of $j_1$. Hence Theorem \ref{thm_j_good_gives_critical regularity} gives that
\[
2u_Q (t_0)< 2^{-j(5-4\alpha + \varepsilon )/2} \leq 2^{-j(5-4\alpha + \varepsilon \delta (Q)/10 )/2}  = 2^{-j \rho (Q)/2},
\]
which contradicts \eqref{fail_at_t0}.\vspace{0.4cm}\\
\emph{Step~3.} We show that
\eqnb\label{est_on_uk}
\begin{split}
u_{Q_k} (t) &\leq c \,2^{-k(5-4\alpha + \varepsilon )/2}\qquad k\in [\theta j, j-5 ],\\
u_{3Q/2,k} (t) &\leq \begin{cases}
c \,2^{-j(\rho(Q) -2\varepsilon/5 )/2} &k\in [ j-4 ,\ldots , j+ 100/\varepsilon ],	\\
c\,2^{-3j}2^{-k(9-4\alpha)/2}& k\geq j+ 100/\varepsilon
\end{cases}
\end{split}
\eqne
for $t\in (t_1,t_0)$.\vspace{0.3cm}\\
\emph{Case 1.} $k\in [\theta j, j-5 ]$. 

If $\delta (Q_k ) \geq 11$ then in particular $Q_k \subset rQ_{j_1}(x) $ and $\rho (Q_k) \geq 5-4\alpha +\varepsilon $ and so the claim follows from \eqref{ok_until_t0}. If $\delta (Q_k) \leq 10$ then $Q_{l-10}$ is good for every $l\in [\theta k, k]$ due to \eqref{q'_fact}, since
\eqnb\label{temp_lk}
l-10\geq \theta k -10 \geq \theta^2 j - 10 \geq \theta^2 j_1 -10 \geq j_0.
\eqne
Therefore the claim follows from Theorem \ref{thm_j_good_gives_critical regularity}.\vspace{0.3cm}\\
\emph{Case 2.} $k\in [j-4, \ldots , j+100/\varepsilon )$. 

Then
\eqnb\label{k_geq_j-4_deltas}
\delta (Q_k) = \delta (Q) +k-j \geq \delta (Q) -4\geq 7,
\eqne 
where we used Step~2 in the last inequality. Hence $Q_k\subset rQ_{j_1}(x) $ and
\[
\rho (Q_k ) \geq \rho (Q) -2\varepsilon /5.
\] 
Thus since for $k\in [j-4, j-1 ]$ we have $3Q/2\subset Q_k$, \eqref{ok_until_t0} gives
\[
u_{3Q/2,k} \leq 2^{-k \rho (Q_k ) /2} \leq 2^{-k(\rho(Q) -2\varepsilon/5 )/2} \leq c\, 2^{-j(\rho(Q) -2\varepsilon/5 )/2} ,
\]
as required. If $k\geq j$ we note that 
\eqnb\label{cover_of_7Q/4}
u_{3Q/2,k} \leq \sum_{Q'\in S_k (7Q/4 )} u_{Q'} ,
\eqne
where $S_k (7Q/4)$ denotes a cover of $7Q/4$ by $k$-cubes with $\# S_k (7Q/4)\leq c 2^{3(k-j)(1-\varepsilon )}$ (recall the beginning of Section \ref{sec_main_result}). Since 
\eqnb\label{q'_vs_7Q4}
Q'_j=2^{-(j-k)(1-\varepsilon )}Q' \subset Q_{j-2}\quad \text{ for every }Q' \in S_k (7Q/4 ),
\eqne
see Fig.~\ref{fig_cover_of_7Q4}, we obtain
\eqnb\label{rel_between_Q'_and_Q_for_k_leq_100j/eps}
\delta (Q' ) = \delta (Q'_j ) + k-j \geq \delta (Q_{j-2}) = \delta (Q) -2,
\eqne
and so $\rho (Q') \geq \rho (Q) -\varepsilon/5 $.
\begin{figure}[h]
\centering
\captionsetup{width=.9\linewidth}
 \includegraphics[width=0.3\textwidth]{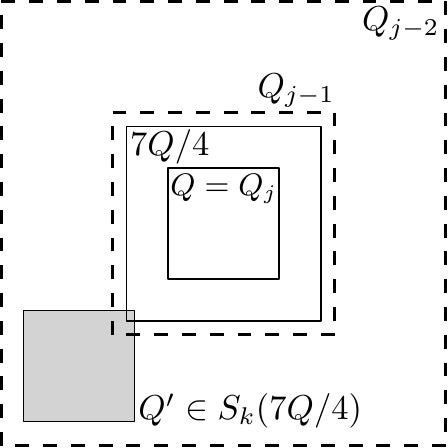}
 \nopagebreak
  \captionof{figure}{An illustration of \eqref{q'_vs_7Q4} - note that each $Q'\in S_k (7Q/4)$ is of the same size as $Q$ (as in the illustration) or smaller (as $k\geq j$).} \label{fig_cover_of_7Q4} 
\end{figure} Therefore \eqref{ok_until_t0} gives
\[
u_{Q'}\leq 2^{-k\rho (Q' )/2} \leq 2^{-k ( \rho (Q) -\varepsilon /5)/2 } \leq c \, 2^{-j ( \rho (Q) -2\varepsilon /5)/2 } ,
\]
and since $\# S_k (7Q/4 ) \leq c\,2^{300(1-\varepsilon)/\varepsilon } = c$ (recall our constants may depend on $\varepsilon $) the claim follows by applying \eqref{cover_of_7Q/4} above.\vspace{0.3cm}\\
\emph{Case 3.} $k \geq j+100 /\varepsilon$. 

For such $k$ we improve \eqref{rel_between_Q'_and_Q_for_k_leq_100j/eps} by writing
\eqnb\label{rel_between_Q'_and_Q_for_k_geq_100j/eps}
\delta (Q' ) = \delta (Q'_j ) + k-j \geq \delta (Q_{j-2}) + 100/\varepsilon  = \delta (Q) +100/\varepsilon -2> 100/\varepsilon
\eqne
for any $Q'\in S_k (7Q/4)$
where we used Step~2 in the last inequality. This gives $\rho (Q') = 15-4\alpha $. Thus using \eqref{cover_of_7Q/4} and the estimate $\# S_k (7Q/4 )\leq c\,2^{3(k-j )(1-\varepsilon )} \leq c\,2^{3(k-j )} $ we arrive at
\[
u_{3Q/2,k} \leq \sum_{Q' \in S_k (7Q/4)} u_{Q'} \leq  \sum_{Q' \in S_k (7Q/4)} 2^{-k\rho(Q')/2} \leq c\, 2^{3(k-j)} 2^{-k\rho (Q')/2} = c \, 2^{-3j} 2^{-k(9-4\alpha )/2},
\]
as required.\vspace{0.4cm}\\
\emph{Step~4.} We use the previous step to estimate the terms appearing on the right-hand side of the main estimate \eqref{basic_est_rewritten_for_full_reg}. Namely we show that
\eqnb\label{est_terms_on_rhs}\begin{split}
\sum_{\theta j \leq k \leq j-5}2^{3k/2} u_{3Q/2,k} &\leq c\, 2^{3j/2 }2^{-j (5-4\alpha )/2}2^{-j \varepsilon /2},\\
  u_{3Q/2,j\pm 4}^2 &\leq c\, 2^{-j\rho (Q) /2 }2^{-j (5-4\alpha )/2} 2^{-j\varepsilon /10 } ,\\
    \sum_{k\geq j+1 } 2^k u_{3Q/2,k}^2 &\leq c \, 2^{j}2^{-j\rho (Q)/2}2^{-j(5-4\alpha )/2} 2^{-j  \varepsilon/10}
    \end{split}
\eqne

We note that, although the terms appearing on the right-hand side might look complicated we write them in this form to articulate their roles. As for the factors $2^{3j/2}$ or $2^j$, these are ``bad factors'' which, together with the corresponding factor in the main estimate \eqref{basic_est_rewritten_for_full_reg}, give $2^{5j/2}$. This should be compared against the factor $2^{2\alpha j}$ which is a ``good factor'' given by the dissipation (i.e. by the first term on the right-hand side of \eqref{basic_est_rewritten_for_full_reg}, which comes with a minus). This brings us to the factors of the form $2^{-j(5-4\alpha  )}$ whose role is exactly to balance the ``bad factor'' against the ``good factors''. 

As for the factors $2^{-j\rho (Q)/2}$, we point out that together with the corresponding factor $u_{Q}$ (which is bounded above and below by $2^{-j\rho(Q)/2}$ due to \eqref{uQ_between_t1_t0}) appearing in the basic estimate, one obtains $2^{-j\rho (Q)}$ as the common factor of all terms in \eqref{basic_est_rewritten_for_full_reg}.

Finally, the role of any factor involving $\varepsilon$ is to make sure that the balance falls in our favor, namely that the resulting constant at all terms on the right-hand side of \eqref{basic_est_rewritten_for_full_reg} (except for the first term), is smaller than the constant at the first term (the dissipation term). Writing the estimates in this way also points out the appearance of $5-4\alpha $, which is our desired bound on the Hausdorff dimension.\\

We now briefly verify \eqref{est_terms_on_rhs}. The first two of them follow from Step~3 by a simple calculation,
\eqnb\label{where_to_plugin_low_modes}
\sum_{\theta j \leq k \leq j-5}2^{3k/2} u_{3Q/2,k} \leq c\sum_{\theta  j \leq k \leq j-5}2^{-k (2-4\alpha +\varepsilon )/2} \leq c \, 2^{-j (2-4\alpha +\varepsilon )/2}
\eqne
and
\[
 u_{3Q/2,j\pm 4}^2 \leq c\, 2^{-j (\rho (Q) -2\varepsilon /5)}= c\, 2^{-j\rho(Q)/2} 2^{-j(\rho (Q) - 4\varepsilon /5 )/2} \leq c \, 2^{-j\rho(Q)/2} 2^{-j (5-4\alpha )/2} 2^{-j \varepsilon /10}, 
\]
as required, where we used \eqref{rho(Q)_geq_5-4alpha+eps} in the last inequality. As for the third estimate in \eqref{est_terms_on_rhs} we write $\sum_{k\geq j+1} = \sum_{j+1\leq k \leq j+100/\varepsilon } + \sum_{k>j+100/\varepsilon}$, and estimate each of the two sums separately,
\[
\sum_{j+1\leq k \leq j+100/\varepsilon } 2^k u_{3Q/2,k}^2 \leq c \, 2^j 2^{-j (\rho (Q) -2\varepsilon /5)} \leq c\,2^j 2^{-j \rho(Q)/2} 2^{-j (5-4\alpha + \varepsilon/5)/2}
\]
(recall that $c$ might depend of $\varepsilon $), where we used \eqref{rho(Q)_geq_5-4alpha+eps} in the last inequality, and
\[
\sum_{k>j+100/\varepsilon} 2^k u_{3Q/2,k}^2 \leq c\, 2^{-3j} \sum_{k>j+100/\varepsilon} 2^{-k(8-4\alpha )} \leq c\, 2^{-j(11-4\alpha )}\leq c\,2^j 2^{-j \rho(Q)/2} 2^{-j (5-4\alpha + \varepsilon/5)/2},
\]
where we used the inequality $11-4\alpha \geq -1 +\rho(Q)/2 +(5-4\alpha )/2+\varepsilon/10$ (a trivial consequence of the fact that $\rho (Q) \leq 5-4\alpha +10$).\vspace{0.4cm}\\
\emph{Step~5.} We conclude the proof.

Applying the estimates from the previous step into the main estimate \eqref{basic_est_rewritten_for_full_reg} and recalling that $u_{3Q/2,j\pm 2}^2 \leq c\, 2^{-j(\rho(Q) -2\varepsilon /5 )}$ (from Step~3) we obtain
\[\begin{split}
2^{-j\rho (Q) } &\leq  -c\,2^{2\alpha j }\int_{t_1}^{t_0}  u_{Q}^2\\
&+c \int_{t_1}^{t_0} u_{Q} \left( 2^j 2^{-j(\rho(Q) -2\varepsilon /5 )/2} 2^{3j/2 }2^{-j (5-4\alpha )/2}2^{-j \varepsilon /2} + 2^{5j/2}  2^{-j\rho (Q) /2 }2^{-j (5-4\alpha )/2}2^{-j\varepsilon /10 }  \right. \\
&\hspace{5cm}\left.+ 2^{3j/2} 2^{j}2^{-j\rho (Q)/2}2^{-j(5-4\alpha )/2} 2^{-j  \varepsilon/10} \right) \\
&+ 2^{2\alpha j}2^{-j\varepsilon }\int_{t_1}^{t_0} 2^{-j(\rho(Q) -2\varepsilon /5 )}\\
&= -c\,2^{2\alpha j }\int_{t_1}^{t_0}  u_{Q}^2 + c \, 2^{2\alpha j } \int_{t_1}^{t_0} u_Q \left( 2^{-j\rho(Q)/2}(2^{-3j\varepsilon /10} + 2^{-j\varepsilon /10 } + 2^{-j\varepsilon /10 } )\right)\\
&+c\,  2^{2\alpha j} 2^{-3j\varepsilon /5 } \int_{t_1}^{t_0} 2^{-j\rho (Q)} \\
&\leq - c\, 2^{j(2\alpha - \rho(Q))} (t_0-t_1) (1-c\, 2^{-j\varepsilon /10 } ) ,
\end{split}
\]
where we used the lower bound $u_Q \geq 2^{-j\rho (Q)/2}/2$ (see \eqref{uQ_between_t1_t0}) in the last line. Therefore if $j_0$ is sufficiently large so that 
\[
1-c\, 2^{-j_1 \varepsilon /10 } >0
\]
(where $c$ is the last constant appearing in the calculation above; recall also that $j_1$ is given by \eqref{choice_of_j1}) we obtain
\[
1\leq 0,
\]
a contradiction.
\end{proof}
\begin{corollary}\label{cor_regularity_outside_E}
Given $x\not \in E$ and an interval of regularity $(a_i,b_i)$ there exists an open neighbourhood $U$ of $x$ such that
\[
\| u(t) \|_{L^{\infty } (U)} \text{ remains bounded for } t\in ((a_i+b_i)/2,b_i).
\] 
\end{corollary}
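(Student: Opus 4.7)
The plan is to combine Theorem~\ref{thm_reg_outside_E} with a Littlewood--Paley decomposition $u=\sum_{j\in\ZZ}P_j u$ to derive a pointwise $L^\infty$ bound on $u$ in a small neighbourhood of $x$. I take $U$ to be any open cube centred at $x$ whose closure lies in the interior of $rQ_{j_1}(x)$, so that $\eta\coloneqq \mathrm{dist}(\overline U,\p(rQ_{j_1}(x)))>0$, with $r$ and $j_1$ as provided by Theorem~\ref{thm_reg_outside_E}.

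The first step is an elementary geometric observation: for every $y\in U$ and every $j$ large enough (depending only on $\eta$, $j_1$, $\varepsilon$), the $j$-cube $Q_j(y)$ is contained in $rQ_{j_1}(x)$ and satisfies $\delta(Q_j(y))>100/\varepsilon$. Indeed $Q_{j-k}(y)$ has sidelength $2^{-(j-k)(1-\varepsilon)}$, and this can exceed $2\eta$ (a necessary condition for it to intersect the barrier, since the centre $y$ has distance at least $\eta$ from it) only when $j-k$ is bounded in terms of $\eta$ and $\varepsilon$. Consequently $\rho(Q_j(y))=15-4\alpha$, and Theorem~\ref{thm_reg_outside_E} gives
\[
u_{Q_j(y)}(t)\leq c_i \, 2^{-j(15-4\alpha)/2}
\]
uniformly in $y\in U$ and $t\in((a_i+b_i)/2,b_i)$.

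The second step converts this $L^2$-type bound into a pointwise bound on $P_j u$ via a slight variant of the localised Bernstein inequality~\eqref{localised_bernstein_ineq_single}: for $Q=Q_j(y)$ and $y\in Q$,
\[
|P_j u(y,t)|\leq \|\phi_Q P_j u(t)\|_\infty \leq c\,2^{3j/2}\,u_{Q_j(y)}(t) + e(j).
\]
I would derive this by writing $\phi_Q P_j u = \widetilde P_j(\phi_Q P_j u) + (1-\widetilde P_j)(\phi_Q P_j u)$: the first piece has Fourier support in an annulus of scale $2^j$, so the global Bernstein inequality~\eqref{bernstein_ineq_single} yields the $c\,2^{3j/2}u_{Q_j(y)}$ contribution. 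For the second, I would use the Fourier-side splitting $\phi_Q=\phi_1+\phi_2$ from the proof of Lemma~\ref{lem_moving_lp_across_bumps} (with $\widehat{\phi_1}$ supported in $|\xi|\leq 2^{j-2}$), observe that $\phi_1 P_j u = \widetilde P_j (\phi_1 P_j u)$ on Fourier-support grounds, and bound
\[
\|(1-\widetilde P_j)(\phi_2 P_j u)\|_\infty \leq c\,\|\phi_2\|_\infty \,\|P_j u\|_\infty \leq c\,\|\widehat{\phi_2}\|_1\,2^{3j/2}\|u\| = e(j),
\]
using \eqref{will_show_this_1}, the global Bernstein inequality, and the energy inequality~\eqref{EI_prelims}. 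Crucially, the error ends up as the $j$-negligible $e(j)$ rather than $e_d(j)\|u\|_\infty$, so no uncontrolled $L^\infty$ norm of $u$ is invoked. Combining with Step~1 gives, for $j\geq J_0$,
\[
\|P_j u(t)\|_{L^\infty(U)} \leq c_i \, 2^{-j(6-2\alpha)} + e(j).
\]

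Finally, I would sum the Littlewood--Paley series on $U$: for $j<J_0$ the global Bernstein inequality and the energy inequality yield $\|P_j u(t)\|_{L^\infty(U)}\leq c\,2^{3j/2}\|u_0\|$, summable for $j\leq 0$ and contributing only finitely many terms for $0\leq j<J_0$; for $j\geq J_0$ the bound of the previous step is summable because $\alpha<5/4$ implies $6-2\alpha>7/2>0$. Adding the two contributions yields a bound on $\|u(t)\|_{L^\infty(U)}$ uniform in $t\in((a_i+b_i)/2,b_i)$, as required. The one delicate point, and therefore the main obstacle, is obtaining the pointwise version of the localised Bernstein inequality without invoking $\|u\|_\infty$ (which is the very quantity being bounded); the remainder is a fairly direct consequence of Theorem~\ref{thm_reg_outside_E}.
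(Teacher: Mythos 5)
Your proposal is correct and follows essentially the same strategy as the paper: apply Theorem~\ref{thm_reg_outside_E} to get a strong decay $u_{Q}\lesssim 2^{-j(15-4\alpha)/2}$ on $j$-cubes deep inside the barrier, upgrade to an $L^\infty$ bound on $P_j u$ via a localised Bernstein inequality, and sum the Littlewood--Paley series (the paper chooses a fixed cover $S_j(U)$ and sums over it, whereas you work pointwise via $Q_j(y)$, but this is an inessential variation). The one place you genuinely improve on the paper's exposition is the error term in the localised Bernstein step: applied with $q=\infty$ as stated, \eqref{localised_bernstein_ineq_single} yields $e_d(j)\|u\|_\infty$, which is circular since $\|u\|_\infty$ is the quantity being controlled; your rederivation---splitting $\phi_Q=\phi_1+\phi_2$, killing the $\phi_1$ piece on Fourier-support grounds, and bounding the $\phi_2$ piece by $\|\widehat{\phi_2}\|_1\,2^{3j/2}\|u\|_{L^2}=e(j)$ via the global Bernstein inequality and the energy bound---correctly removes any appeal to $\|u\|_\infty$, making rigorous the step the paper writes as ``$+\,e(j)$'' without comment.
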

\begin{proof} We fix an interval of regularity. By Theorem \ref{thm_reg_outside_E} there exists $j_1$ and $r\in (0,2^{-10}) $ such that 
\[
u_Q (t) \leq 2^{-j\rho (Q)/2}
\]
for all $t\in [0,T)$ and all $j$-cubes $Q\subset rQ_{j_1} (x)$. Let $j_2\in \NN$ be the smallest number such that $\delta (Q) \geq 100/\varepsilon $ for every $j$-cube $Q\subset Q_{j_2}(x)$. (Note that the last condition implies also that $j\geq j_2$.) Then $\rho(Q) \geq 10 $ for any such $j$-cube $Q$ and so $u_Q \leq c\,2^{-5j}$. We let 
\[
U\coloneqq Q_{j_2+2}(x).
\]
In order to show that $\| u(t) \|_{L^\infty (U)}$ remains bounded, we note that the localised Bernstein inequality \eqref{localised_bernstein_ineq_single} gives
\[
\| \phi_Q P_j u \|_{\infty } \leq c\,2^{3j/2} u_Q +e(j)\leq c\, 2^{-7j/2} 
\]
for every $j$-cube $Q\in S_j (U)$ with $j\geq j_2+2$. Hence
\[
\| P_j u \|_{L^\infty (U) } \leq \sum_{Q\in S_j (U)}\| \phi_Q P_j u \|_{\infty } \leq c\, 2^{3(1-\varepsilon )(j-(j_2+2))}2^{-7j/2} =c_{j_2} 2^{-j/2}
\]
for such $j$ and so 
\[\begin{split}
\| u \|_{L^\infty (U) } &\leq \| P_{\leq j_2+1} u \|_{\infty  } +  \sum_{j\geq j_2+2} \| P_j u \|_{L^\infty (U) } \\
& \leq c\, 2^{3j_2/2} \| P_{\leq j_2+1} u \| +  c_{j_2} \sum_{j\geq j_2+2}  2^{-j/2} \\
& \leq c_{j_2} ,
\end{split}
\]
as required, where we used the Bernstein inequality \eqref{bernstein_ineq_leq} in the second inequality.
\end{proof}
\subsection{Regularity for $\alpha >5/4$}\label{sec_reg_alpha_large}
Here we briefly verify Corollary \ref{cor_alpha_big_and_higher_dim}. Letting $\varepsilon \in (0,4\alpha -5 )$ we see that any $j$-cube ($j\geq 0$) satisfies
\[
u_Q(t) \leq c \leq c \,2^{-j (5-4\alpha +\varepsilon ) }
\]
for all $t\geq 0$. Thus any closed and sufficiently small surface $\mathcal{S}\subset \RR^3$ can be used as a barrier, and Theorem \ref{thm_reg_outside_E} (with $\partial (r Q_{j_1}(x))$ replaced by $\mathcal{S}$) gives that $u_Q ( t) < 2^{-j \rho (Q)/2}$ for all $j$-cubes $Q$ located inside $\mathcal{S}$ and all $t\geq 0$ (provided $u_0$ is sufficiently smooth). Furthermore $j_2$ (from the proof of Corollary \ref{cor_regularity_outside_E}) can be chosen independently of $x$ (i.e. depending only on how small $\mathcal{S}$ is), and consequently Corollary \ref{cor_regularity_outside_E} gives boundedness of $\| u (t) \|_{\infty }$ in $t>0$.

\section{The box-counting dimension}\label{sec_db}

Here we prove Theorem~\ref{thm_main2}; namely that $d_B (S^{(k)}) \leq (-16\alpha^2+16\alpha +5 )/3$, where $S^{(k)} \coloneqq \bigcup_{i\leq k} S_i$ (recall \eqref{def_singset_k}). 

A bound on $d_B (S^{(k)})$ can in fact be obtained by examining the proof of Theorem~\ref{thm_reg_outside_E} above. Namely, observing that the only consequence of $x\not \in E$ that we used in its proof was that
\eqnb\label{used_for_naive_db}
x\not \in Q \text{ for any }Q \in \mathcal{B}_k, \,k\in [\theta^2 j_1 -10,j_1]
\eqne
where $j_1$ is taken sufficiently large. 
In fact, this allowed us to deduce that for a given $j$-cube $Q\subset rQ_{j_1} (x)$  the cube $Q_k=2^{(j-k)(1-\varepsilon )}Q$ is good for such $k$'s  (take $j_0\coloneqq \left\lfloor \theta^2 j_1-10 \right\rfloor$ and recall \eqref{barrier_conseq1}, \eqref{barrier_conseq2} and \eqref{q'_fact}). This, in the light of Theorem~\ref{thm_j_good_gives_critical regularity} gave us the ``slightly more than critical'' decay, which in turn enabled us to deduce better decay for cubes located further inside the barrier $rQ_{j_1}(x)$. Corollary~\ref{cor_regularity_outside_E} then deduced that $x\not \in S$.

Using \eqref{used_for_naive_db} we see that for sufficiently large $j$
\[
\bigcup_{k\in \{ \left\lfloor \theta^2 j-10 \right\rfloor , \ldots , j \} } \bigcup_{Q\in \mathcal{B}_k} Q
\]
contains the singular set in space at a given blow-up time. Thus, covering each of the covers $\mathcal{B}_k$ ($k\in \{ \left\lfloor \theta^2 j-10 \right\rfloor , \ldots , j \}$) by at most 
\[ c2^{3(j-k)(1-\varepsilon )} \# \mathcal{B}_k \leq  c2^{3(j-k)(1-\varepsilon )} 2^{k(5-4\alpha +\varepsilon )} = c2^{3j(1-\varepsilon )} 2^{k(2-4\alpha +2\varepsilon )}  \]
$j$-cubes we obtain a cover of the singular set by at most
\eqnb\label{atmost}\begin{split}
c \sum_{k=\left\lfloor \theta^2 j-10 \right\rfloor}^j 2^{3j(1-\varepsilon )} 2^{k(2-4\alpha +2\varepsilon )} &\leq c\,  2^{j(3-3\varepsilon +\theta^2 (2-4\alpha +2\varepsilon ))} \\
&=c\, 2^{j (-64 \alpha^3 + 96\alpha^2 (1+\varepsilon ) -48 \alpha   (1+\varepsilon )^2  + 35+8\varepsilon^3 + 8 \varepsilon^2 - 3\varepsilon )/9}
\end{split}
\eqne
$j$-cubes, where we substituted $\theta = 2(2\alpha -1 -\varepsilon ) /3$ (recall \eqref{def_of_theta}) in the last line. In other words $N(S^{(m)},r)$, the minimal number of $r$-balls required to cover $S^{(m)}$ (recall the definition \eqref{def_db} of the box-counting dimension), satisfies
\eqnb\label{est_on_N}
N(S^{(m)},r) \leq c\,r^{(-64 \alpha^3 + 96\alpha^2 (1+\varepsilon ) -48 \alpha   (1+\varepsilon )^2  + 35+8\varepsilon^3 + 8 \varepsilon^2 - 3\varepsilon )/9(1-\varepsilon )}
\eqne
for sufficiently small $r$.
This gives that
 \eqnb\label{dB_bound_not_sharp} d_B(S^{(m)})\leq (-64 \alpha^3 +96 \alpha^2 -48 \alpha +35 )/9 \eqne
for every $m\in \NN$. As noted in the introduction, we point out that the required smallness of $r$ for \eqref{est_on_N} to hold depends on the interval of regularity $(a_i,b_i)$. This is the reason why we only estimate $d_B(S^{(m)})$, rather than $d_B(S)$.\\

In what follows we present a sharper argument that allows one to get rid of one of $\theta$'s in the first line of \eqref{atmost} to yield the following.
\begin{proposition}\label{prop_db}
Given the interval of regularity $(a_i,b_i)$ the set 
\[
\bigcup_{k\in \{ \left\lfloor \theta j-10 \right\rfloor , \ldots , j \} } \bigcup_{Q\in \mathcal{B}_k} Q
\]
covers the singular set in space at time $b_i$ if $j$ is sufficiently large. 
\end{proposition}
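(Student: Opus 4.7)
The plan is to adapt the proof of Theorem~\ref{thm_reg_outside_E}, exploiting the multi-scale barrier structure made available by the stronger hypothesis of the proposition. Fix $x \notin \bigcup_{k \in \{\lfloor \theta j - 10\rfloor, \ldots, j\}} \bigcup_{Q \in \mathcal{B}_k} Q$ for some sufficiently large $j$. The barrier property \eqref{barrier_property} then yields, for each $k \in [\theta j - 10, j]$, a number $r_k \in (0, 2^{-10})$ such that $\partial(r_k Q_k(x))$ is not touched by any bad $m$-cube with $m \geq k$; moreover $x$ lies in no bad $k$-cube for $k$ in this range. The goal is to conclude that $x \notin S_i$.

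\noindent\textbf{Key geometric input.}
A $j'$-cube $Q \subset r_j Q_j(x)$ (with $j'$ slightly larger than $j$) lies so close to $x$ that every cocentric ancestor $Q_m = 2^{(j'-m)(1-\varepsilon)}Q$ with $m \leq j$ automatically contains $x$. Consequently, for $m \in [\theta j - 10, j]$ we have $Q_m \ni x$ and $x \notin \mathcal{B}_m$, so $Q_m$ is good. In particular, the time-integral bound $\int u_{Q_k}^2 \leq c\,2^{-k(5-2\alpha+\varepsilon)}$ of the form used in \eqref{smart3} is available for $k$ with $k - 10 \in [\theta j - 10, j]$, without any recursive descent to level $\theta^2 j$.

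\noindent\textbf{Main modification.}
I would then run the first-time-of-failure argument of Theorem~\ref{thm_reg_outside_E} at scale $j'$. The only place where the original proof required goodness of ancestors all the way down to $\theta^2 j'$ was in Step~3, Case~1, through the pointwise estimate on $u_{Q_k}$ obtained by invoking Theorem~\ref{thm_j_good_gives_critical regularity} at scale $k$. I bypass this by estimating the low-mode contribution to the main estimate \eqref{basic_est_rewritten_for_full_reg} directly via Cauchy--Schwarz in time, in the spirit of the closing computation in the proof of Theorem~\ref{thm_j_good_gives_critical regularity}, using only the time-integral bound above. Combined with the pointwise bound $u_Q \leq c\,2^{-j'\rho(Q)/2}$ from the induction hypothesis \eqref{uQ_between_t1_t0}, this produces a bound on the low-mode term that is smaller than the left-hand side $(3/4)\,2^{-j'\rho(Q)}$ by a factor $2^{-j'\delta}$ for some $\delta > 0$, independently of $(t_0-t_1)$. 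Since the dissipation contributes a nonpositive quantity, this immediately yields a contradiction with the positive left-hand side.

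\noindent\textbf{Main obstacle.}
The hardest part will be handling the range of scales $j'$ at which the modified argument applies. The constraint $k - 10 \in [\theta j - 10, j]$ together with $k \in [\theta j', j' - 5]$ confines $j'$ to a band just above~$j$, so to reach the full range of $j'$ required by the summation in Corollary~\ref{cor_regularity_outside_E} one must exploit the intermediate barriers $\partial(r_k Q_k(x))$ for $k \in [\theta j - 10, j)$, iterating the above argument at each of these scales to cover every $j'$ up to $j/\theta$ and beyond. Once the bounds $u_Q(t) \leq c\,2^{-j'\rho(Q)/2}$ are established for all $j'$-cubes $Q \subset U$ in a small neighbourhood $U$ of $x$ and all large $j'$, summing $\|P_{j'} u\|_{L^\infty(U)}$ as in the proof of Corollary~\ref{cor_regularity_outside_E} gives the required $L^\infty$-boundedness of $u(t)$ near $x$ on the interval of regularity $(a_i, b_i)$, contradicting $x \in S_i$ and thereby proving the proposition.
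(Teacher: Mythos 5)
Your core idea — estimating the low-mode term via Cauchy--Schwarz in time using the time-integral bound from goodness, thereby saving one factor of $\theta$ — is exactly the paper's, so you have the right strategy. But there is a genuine error in the final punchline. You claim that the resulting bound on the low-mode term is ``smaller than the left-hand side $(3/4)\,2^{-j'\rho(Q)}$ by a factor $2^{-j'\delta}$ \emph{independently of $(t_0-t_1)$},'' and that one can then discard the dissipation and ``immediately'' contradict positivity of the left-hand side. This is false. The Cauchy--Schwarz in time produces precisely a $(t_0-t_1)^{1/2}$ factor: schematically, using the pointwise bounds $u_Q, u_{3Q/2,j\pm2} \lesssim 2^{-j\rho(Q)/2}$ and $\int_{t_1}^{t_0} u_{Q_k}^2 \lesssim 2^{-k(5-2\alpha+\varepsilon)}$, the low-mode contribution is bounded by a quantity of size $2^{-j\rho(Q)}\,2^{\alpha j}\,2^{-j\varepsilon\delta'}\,(t_0-t_1)^{1/2}$. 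This carries a \emph{growing} factor $2^{\alpha j}$ and is not $(t_0-t_1)$-uniform, so it cannot be compared to $2^{-j\rho(Q)}$ directly. The dissipation term $-c\,2^{2\alpha j}(t_0-t_1)\,2^{-j\rho(Q)}$ is \emph{essential}: one must apply Young's inequality $ab\le \tfrac12 a^2 + cb^2$ to split the $(t_0-t_1)^{1/2}$-term into a piece $\tfrac12\,2^{-j\rho(Q)}$ (absorbed on the left) and a piece of order $2^{2\alpha j}(t_0-t_1)2^{-j\rho(Q)}2^{-3j\varepsilon/5}$ (absorbed by dissipation). Throwing away the dissipation, as you propose, destroys the argument.

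A secondary remark: your multi-barrier framework (one barrier $\partial(r_k Q_k(x))$ per scale $k\in[\theta j-10,j]$, with a subsequent iteration over $j'$) is substantially more complicated than what is needed, and you acknowledge in your ``main obstacle'' paragraph that making the iteration reach all $j'$ is unresolved. The paper avoids this entirely by using a \emph{single} barrier $\partial(r Q_{j_1}(x))$ and the fact \eqref{q'_fact} that any $k$-cube touching this barrier with $k\ge j_0$ is good — either because it is small ($k\ge j_1$, use the barrier property \eqref{barrier_conseq2}) or because it is large enough to contain $x$ ($k<j_1$, use \eqref{barrier_conseq1}). With $j_0 \coloneqq \lfloor\theta j_1-10\rfloor$, the ancestors $Q_{k-10}$ with $\delta(Q_k)\le 10$ that arise in the Cauchy--Schwarz step satisfy $k-10 \ge \theta j -10 \ge \theta j_1 - 10 \ge j_0$, so \eqref{q'_fact} applies directly and no iteration is required. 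You should also split the low-mode sum into $\delta(Q_k)\ge 11$ (use the induction hypothesis pointwise, contributing a $(t_0-t_1)$-scaling term) and $\delta(Q_k)\le 10$ (use goodness and Cauchy--Schwarz in time, contributing the $(t_0-t_1)^{1/2}$-scaling term); your proposal does not make this case distinction explicit.
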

Assuming this proposition and letting $\mathcal{C}_j$ be a $j$-cover of all elements of $\mathcal{B}_k$ for $k=\left\lfloor \theta j-10 \right\rfloor, \ldots , j$ we obtain a $j$-cover of the singular set with
\[\begin{split}
\# \mathcal{C}_j &\leq c \hspace{-0.2cm}\sum_{k= \left\lfloor \theta j-10 \right\rfloor}^j 2^{3(j-k)(1-\varepsilon )} \# \mathcal{B}_k \leq c\, 2^{j(3-3\varepsilon +\theta (2-4\alpha +2\varepsilon ))} = c\, 2^{j\frac{-16\alpha^2 +16\alpha (1+\varepsilon ) + 5 - 17\varepsilon - 4\varepsilon^2     }{3}},
\end{split}
\]
which shows that $d_B(S^{(m)}) \leq (-16\alpha^2 + 16\alpha +5)/3$ for all $m\in \NN$, by an analogous argument as above. This is sharper than \eqref{dB_bound_not_sharp}, and it proves Theorem \ref{thm_main2}. We note that if one was able to get rid of the other $\theta $ in \eqref{atmost}, then one would obtain $d_B(S)\leq 5-4\alpha $, i.e. the same bound as for $d_H(S)$.

Before proceeding to the proof of Proposition \ref{prop_db}, we comment on the main idea of Proposition~\ref{prop_db} in an informal way. 

Recall \eqref{temp_lk} that for each $k\in [\theta j , j-5]$ we needed $Q_{l-10}$ to be good for $l\in [\theta k, k]$, and deduced from the ``$\varepsilon$-better than critical'' decay for $u_{Q_k}$ (in Case~1 of Step~3 of the proof of Theorem~\ref{thm_reg_outside_E}, by using Theorem~\ref{thm_j_good_gives_critical regularity}), which we have then plugged into the sum of the low modes of the main estimate \eqref{basic_est_rewritten_for_full_reg} (in \eqref{where_to_plugin_low_modes} above). However, looking closely at this term of the main estimate,
\[
2^j \int_{t_1}^{t_0} u_Q u_{3Q/2, j\pm 2} \sum_{\theta j \leq k\leq j-5} 2^{3k/2} u_{Q_k}
\] 
we observe that it is of a similar structure as the definition of a good cube \eqref{j-good}. Indeed, ignoring $u_Q$ and $u_{3Q/2, j\pm 2}$ for a moment we see that we could use \eqref{j-good} to estimate it. If that was possible, we would only need to require that $Q_k$ (or rather $Q_{k-10}$) is good for $k\in [\theta j , j-5]$, and so we would end up with a saving of one $\theta$.
The only problem is that \eqref{j-good} is concerned with the time integral of a squared function, rather than the function itself, and so, applying the Cauchy-Schwarz inequality in the time integral we would obtain an additional factor of $(t_0-t_1)^{-1/2}$, see the last term in \eqref{db_new_plugging_in} below. It turns out that this additional factor can be taken care of, by absorbing a part of this term by the left-hand side (as in \eqref{db_absorbing} below).

\begin{proof}[Proof of Proposition~\ref{prop_db}.]
We will show that if $j_1$ is sufficiently large then every $x$ outside of $\mathcal{C}_{j_1}$ is a regular point in the given interval of regularity $(a,b)$. We set
\eqnb\label{db_def_of_j0}
j_0\coloneqq \left\lfloor \theta j_1 -10 \right\rfloor.
\eqne

As in Theorem~\ref{thm_reg_outside_E} we show that for sufficiently large $j_1=j_1(c_i)$ 
\eqnb\label{better_than_critical_db}
u_Q (t) < c_i 2^{-j\rho (Q)}
\eqne
for every $x\not \in \bigcup_{Q\in \mathcal{C}_{j_1}} Q$, where $c_i$ depends on the interval of regularity $(a_i,b_i)$. In fact, we can copy the entire proof of Theorem~\ref{thm_reg_outside_E}, except for Step~4, where we replace the estimate on the low modes (i.e. the first inequality in \eqref{est_terms_on_rhs}) by 
\eqnb\label{db_step4_replacement}
\sum_{k\in [\theta j , j-5]}  2^{3k/2} \int_{t_1}^{t_0} u_{Q_k} \leq c (t_0-t_1) 2^{-j(2-4\alpha +\varepsilon )/2} +  c (t_0-t_1)^{1/2}  2^{-j (2-2\alpha + \varepsilon )/2}  ,
\eqne
which we prove below. Given \eqref{db_step4_replacement}, we can plug it in, together with the remaining two inequalities in \eqref{est_terms_on_rhs}, into the main estimate \eqref{basic_est_rewritten_for_full_reg} (just as we did in Step~5 of the proof of Theorem~\ref{thm_reg_outside_E} above) to yield
\eqnb\label{db_new_plugging_in} \begin{split}
2^{-j\rho (Q) } &= c \left( u_Q(t_0 )^2 - u_Q(t_1)^2 \right) \\
&\leq  -c\,2^{2\alpha j }\int_{t_1}^{t_0}  u_{Q}^2+c \int_{t_1}^{t_0} u_{Q} \left( 2^j u_{3Q/2 ,j\pm 2} \sum_{\theta  j \leq k\leq j-5 } 2^{3k/2} u_{Q_k}   \right. \\
&\hspace{1.5cm}\left.+ 2^{5j/2}  u_{3Q/2 ,j\pm 4}^2+ 2^{3j/2} \sum_{k\geq j+1} 2^{k} u_{3Q/2 ,k}^2  \right) +2^{2\alpha j}2^{-j\varepsilon }\int_{t_1}^{t_0} u_{3Q/2,j\pm 2}^2 +e(j)\\
&\leq  -c\,2^{2\alpha j }(t_0-t_1) 2^{-j\rho (Q)} +c\,  2^{-j\rho (Q)} 2^{j(1+\varepsilon /5 )} \int_{t_1}^{t_0} \sum_{\theta  j \leq k\leq j-5 } 2^{3k/2} u_{Q_k}  \\
&\hspace{1.5cm}+ c (t_0-t_1) 2^{2\alpha j} 2^{-j\rho (Q)} \left( 2^{j\varepsilon /10} + 2^{j\varepsilon /10} + 2^{3j\varepsilon /5} \right) \\
& \leq 2^{2\alpha j }(t_0-t_1) 2^{-j\rho (Q)} \left( -c + c 2^{-j\varepsilon /10 } \right) + c(t_0-t_1 )^{1/2} 2^{-j\rho (Q) } 2^{\alpha j} 2^{-3j\varepsilon /10} 
\end{split}
\eqne
where, in the last step, we applied \eqref{db_step4_replacement} to estimate the low modes. At this point we obtain the same inequality as before (i.e. as in Step~5 of the proof of Theorem~\ref{thm_reg_outside_E}), except for the last term, which can be estimated using Young's inequality $ab\leq a^2/2+cb^2$ to give 
\eqnb\label{db_absorbing}
\frac12 2^{-j\rho(Q)} + c 2^{2\alpha j }(t_0-t_1) 2^{-j\rho (Q)} 2^{-3j\varepsilon /5}.
\eqne
Absorbing the first term above on the left-hand side we obtain
\[
1\leq 2^{2\alpha j }(t_0-t_1) \left( -c + c 2^{-j\varepsilon /10 } \right) ,
\]
which gives a contradiction for sufficiently large $j$.\\

It remains to verify \eqref{db_step4_replacement}. To this end, if $\delta (Q_k) \geq 11$ then, as before, we can use the fact that the claim \eqref{better_than_critical_db} remains valid until $t_0$ to obtain that
\[
\sum_{\substack{k\in [\theta j , j-5]\\ \delta (Q_k)\geq 11}} 2^{3k/2} \int_{t_1}^{t_0} u_{Q_k} \leq c(t_0-t_1)\sum_{\substack{k\in [\theta j , j-5]\\ \delta (Q_k)\geq 11}}  2^{3k/2} 2^{-k\rho (Q_k)/2}   \leq c (t_0-t_1) 2^{-j(2-4\alpha +\varepsilon )/2},
\]
where we used the fact that $\rho (Q_k)\geq 5-4\alpha +\varepsilon$ in the last inequality.

If $\delta (Q_k)\leq 10$ then $Q_{k-10}$ intersects the barrier $\p (rQ_{j_1} (x))$ and so it is good as $k-10 \geq \theta j -10 \geq j_0$ (recall \eqref{q'_fact} and \eqref{db_def_of_j0}). Thus since $\phi_{Q_k} \leq 1_{Q_{k-10}}$ (recall \eqref{what_is_phi_Q}) the definition \eqref{j-good} of a good cube gives
\[
\int_{t_1}^{t_0} u_{Q_k}^2 \leq \int_{t_1}^{t_0} \int_{Q_{k-10}} |P_k u |^2 \leq c\,2^{-k (5-2\alpha +\varepsilon )}.
\]
Hence
\[\begin{split}
\sum_{k\in [\theta j,  j-5], \delta (Q_k)\leq 10} 2^{3k/2} \int_{t_1}^{t_0}  u_{Q_k} &\leq (t_0-t_1)^{1/2} \sum_{\theta j, \ldots , j-5, \delta (Q_k)\leq 10 } 2^{3k/2} \left( \int_{t_1}^{t_0}   u_{Q_k}^2 \right)^{1/2} \\
&\leq c (t_0-t_1)^{1/2} \sum_{k\leq j-5} 2^{-k (2-2\alpha + \varepsilon )/2} \\
&= c (t_0-t_1)^{1/2}  2^{-j (2-2\alpha + \varepsilon )/2}  ,
\end{split}
\]
as required.
\end{proof}

\section*{Acknowledgements}
The majority of this work was conducted under postdoctoral funding from ERC 616797. The author has also been supported by the AMS Simons Travel Grant as well as funding from the Charles Simonyi Endowment at the Institute for Advanced Study. The author is grateful to Silja Haffter and Xiaoyutao Luo for their comments.
\bibliography{literature}{}

\end{document}